\newtheorem{theorem}{Theorem}[section]
\newtheorem{lemma}[theorem]{Lemma}
\newtheorem{proposition}[theorem]{Proposition}
\newtheorem{corollary}[theorem]{Corollary}
\newtheorem{assertion}[theorem]{Assertion}
\newtheorem*{step1}{Step~$\mathbf{1}$}
\newtheorem*{stepk}{Step~$\boldsymbol{k}$}
\newtheorem{classification}[theorem]{Classification}
\newtheorem{classicalfact}[theorem]{Classical Fact}
\theoremstyle{remark}
\newtheorem*{diffpfradrays}{Proof of Lemma \ref{l:radrays} for \textsc{diff}}
\newtheorem*{plpfradrays}{Proof of Lemma \ref{l:radrays} for \textsc{pl}}
\newtheorem*{gasketuniquenessproof}{Proof of Lemma \ref{l:gasketuniqueness}}
\newtheorem*{pfoflemmacfg}{Proof of Lemma \ref{cfg}}
\newtheorem*{pfoflemmaTG}{Proof of Lemma \ref{l:TGisgasket}}
\newtheorem*{pfofourunknottingtheoremmeq2}{Proof of the HLT (Theorem~\ref{t:ourunknottingtheorem}) for $m=2$}
\newtheorem*{pfofourunknottingtheoremmeq3pldiff}{Proof of the HLT (Theorem~\ref{t:ourunknottingtheorem}) for $m\geq3$ and \textsc{cat}$=$\textsc{pl} or \textsc{diff}}
\newtheorem*{pfofourunknottingtheoremmeq3top}{Proof of the HLT (Theorem~\ref{t:ourunknottingtheorem}) for $m\geq3$ and \textsc{cat}$=$\textsc{top}}
\newtheorem*{pfofdocilethmmgt3pldiff}{Proof of the MHLT (Theorem~\ref{docilethm}) for $m>3$ and \textsc{cat}$=$\textsc{pl} or \textsc{diff}}
\newtheorem*{pfofdocilethmmgt3top}{Proof of the MHLT (Theorem~\ref{docilethm}) for $m>3$ and \textsc{cat}$=$\textsc{top}}
\newtheorem*{pfofdocilethmmeq2}{Proof of the MHLT (Theorem~\ref{docilethm}) for $m=2$}
\newtheorem*{case1comp}{Case $\left|S\right|=1$}
\newtheorem*{case2comp}{Case $\left|S\right|=2$}
\newtheorem*{casegt2comp}{Proof of the MHLT (Theorem~\ref{docilethm}) for \textsc{cat}$=$\textsc{top} and $\left|S\right|>2$}
\newtheorem*{slabproof}{Proof of the Slab Theorem}
\newtheorem*{proofarc}{Proof of the Arc Flattening Lemma}
\newtheorem*{onpfofass}{Proof of Assertion~\ref{goodballnhbd}}
\newtheorem*{pfofC}{Proof of Proposition~\ref{propC} from Propositions~\ref{propA} and~\ref{propB}}
\newtheorem*{pfofmrt}{Proof of the $2$-MRT (Theorem~\ref{mrt})}
\newtheorem*{pf2dschoen}{Proof of \textsc{diff} Schoenflies for $m=2$}
\newtheorem*{pfhlt}{Proof of \textsc{diff} HLT (Theorem~\ref{t:ourunknottingtheorem}) for $m=2$}
\newtheorem*{sgp}{Sketch of a geometric proof of Assertion~\ref{2ga}}
\newtheorem*{sctp}{Sketch of a classical topological proof of Assertion~\ref{2ga}}
\newtheorem*{poc}{Proof of Classification (in outline)}
\newtheorem*{pfhlta1}{Proof of Assertion~\ref{hlta1}}
\newtheorem*{pfhlta2}{Proof of Assertion~\ref{hlta2}}
\newtheorem*{pfhenrya1}{Proof of Assertion~\ref{henrya1}}
\newtheorem*{pfhenrya2}{Proof of Assertion~\ref{henrya2}}
\newtheorem*{noqedproof}{Proof}
\newtheorem*{pfgrt}{Proof of the GRT (Corollary~\ref{grt})}
\newtheorem*{pfofequivcond}{Proof of Proposition~\ref{equivcond}}
\theoremstyle{definition}
\newtheorem{remark}[theorem]{Remark}
\newtheorem{remarks}[theorem]{Remarks}
\newtheorem{observation}[theorem]{Observation}
\newtheorem{notation}[theorem]{Notation}
\def\Int{{\textnormal{Int}}}
\def\int{{\textnormal{int}}}
\def\ss{\vrule height 0 pt width 0 pt depth 5 pt}
\def\cs{\mathbin{\sharp}}
\def\csinf{\mathbin{\natural}}
\def\CSI/{{\textnormal{CSI}}}
\def\Hull{{\textnormal{Hull}}}
\def\R{{\mathbb R}}
\def\Z{{\mathbb Z}}
\def\Bd{\partial}
\font\ssem=cmssi10
\newcommand{\dfb}[1]{{\ssem #1}}
\font\cuf=cmtt8
\newcommand{\curl}[1]{{\cuf #1}}
\newcommand{\fg}[1]{{\left\|  #1 \right\|}}
\newcommand{\nss}[1]{{\vskip 1.5ex \noindent {\bfseries \normalsize #1.}\hfill\vglue 0.75ex}}
\def \Item#1!{\noindent\hbox to 18pt{\bf\kern2pt #1 \hss\ignorespaces}}
\begin{document}
\title[\textnormal{CSI} and Hyperplane Unknotting]{Connected Sum at Infinity and Cantrell-Stallings hyperplane unknotting}
\author[J. Calcut]{Jack S. Calcut}
\address{Department of Mathematics\\
         Oberlin College\\
         Oberlin, OH 44074}
\email{jack.calcut@oberlin.edu}
\urladdr{\href{http://www.oberlin.edu/faculty/jcalcut/}{\curl{http://www.oberlin.edu/faculty/jcalcut/}}}
\author[H. King]{Henry C. King}
\address{Department of Mathematics, University of Maryland\hfill\break
         \indent College Park, MD 20742-4015}
\email{hck@math.umd.edu}
\urladdr{\href{http://www.math.umd.edu/~hck/}{\curl{http://www.math.umd.edu/\textasciitilde hck/}}}
\author[L. Siebenmann]{Laurent C. Siebenmann}
\address{Laboratoire de Math\'ematique, B\^at 425, Bur 234, \hfill\break \indent Universit\'e de Paris-Sud\\
         91405-Orsay, France}
\email{lcs7777@gmail.com}
\urladdr{\href{http://topo.math.u-psud.fr/~lcs/}{\curl{http://topo.math.u-psud.fr/\textasciitilde lcs/}}\hfill\break
					\vfill
					\hrule
					\hrule height 3pt width 0pt
					\noindent \textrm{This preprint, dated 12 October 2010, improves on several earlier versions dating back as far as 2004;
														it will be posted at} \href{http://lcs98.free.fr/biblio/}{\curl{http://lcs98.free.fr/biblio/}}
										\textrm{along with any significant update.}
					\eject}

\dedicatory{Dedicated to Ljudmila V. Keldysh and the members of her topology seminar\\on the occasion of the centenary of her birth.{\hspace{-1pt}${}^1$}}
    
\keywords{Schoenflies theorem, Cantrell-Stallings hyperplane unknotting, hyperplane linearization, connected sum at infinity, flange, gasket, contractible manifold, Mittag-Leffler, derived limit, Slab Theorem.}
\subjclass[2000]{Primary: 57N50; Secondary: 57N37.}
\date{September 17, 2008 (revised October 12, 2010)}
\maketitle

\footnotetext[1]{See~\cite{chernavsky05}.}

\section{Introduction}\label{s:intro}

We give a general treatment of the somewhat unfamiliar operation on manifolds called \emph{connected sum at infinity} or \textnormal{CSI} for short. A driving ambition has been to make the geometry behind the well definition and basic properties of \textnormal{CSI} as clear and elementary as possible. \textnormal{CSI} then yields a very natural and elementary proof of a remarkable theorem of J.C.~Cantrell and J.R.~Stallings~\cite{cantrell,stallings65}. It asserts unknotting of \textsc{cat} embeddings of $\mathbb{R}^{m-1}$ in $\mathbb{R}^{m}$ with $m \ne 3$, for all three classical manifold categories: topological (=~\textsc{top}), piecewise linear (=~\textsc{pl}), and differentiable (=~\textsc{diff}) --- as defined for example in~\cite{kirbysiebenmannbook}. It is one of the few major theorems whose statement and proof can be the same for all three categories. We give it the acronym HLT, which is short for ``Hyperplane Linearization Theorem'' (see Theorem~\ref{t:ourunknottingtheorem} plus~\ref{maintheorem}).\ss

We pause to set out some common conventions that are explained in~\cite{kirbysiebenmannbook} and in many textbooks. By default, spaces will be assumed \emph{metrizable}, and \emph{separable} (i.e. having a countable basis of open sets). Simplicial complexes will be unordered. A \textsc{pl} \emph{space} (often called a \emph{polyhedron}) has a maximal family of \textsc{pl} compatible triangulations by locally finite simplicial complexes. \textsc{cat} \emph{submanifolds} will be assumed \emph{properly embedded} and \textsc{cat} \emph{locally flat}.\ss

This Cantrell-Stallings unknotting theorem (=~HLT) arose as an enhancement of the more famous Schoenflies theorem initiated by B.~Mazur \cite{mazurbams} and completed by M.~Brown~\cite{brown60,brown62}. The latter asserts \textsc{top} unknotting of \textsc{top} codimension~$1$ spheres in all dimensions: any locally flatly embedded $(m-1)$-sphere in the $m$-sphere is the common frontier of a pair of embedded $m$-balls whose union is $S^{m}$. This statement is cleaner inasmuch as dimension $3$ is not exceptional. On the other hand, its proof is less satisfactory, since it does not apply to the parallel \textsc{pl} and \textsc{diff} statements. Indeed, for \textsc{pl} and \textsc{diff}, one requires a vast medley of techniques to prove the parallel statement, leaving quite undecided the case $m=4$, even today.\ss

The proof of this \textsc{top} Schoenflies theorem immediately commanded the widest possible attention and opened the classical period of intense study of \textsc{top} manifolds. There is an extant radio broadcast interview of R.~Thom in which he states that, in receiving his Fields Medal in 1958 in Edinburgh for his cobordism theories~\cite{thom} 1954, he felt that they were already being outshone by J.W.~Milnor's exotic spheres~\cite{milnorexoticsphere} 1956 and the Schoenflies theorem breakthrough of Mazur just then occurring.\ss

At the level of proofs, the Cantrell-Stallings theorem is perhaps the more satisfactory. The \textsc{top} proof we present is equally self contained and applies (with some simplifications) to \textsc{pl} and \textsc{diff}. At the same time, Mazur's original infinite process algebra is the heart of the proof. Further, dimension $3$ is not really exceptional. Indeed, as Stallings observed, provided the theorem is suitably stated, it holds good in all dimensions.\footnote{Stallings deals with \textsc{diff} only; his proof~\cite{stallings65} differs significantly from ours, but one can adapt it to \textsc{pl} and probably to \textsc{top}.} Finally, its \textsc{top} version immediately implies the stated \textsc{top} Schoenflies theorem. We can thus claim that the Cantrell-Stallings theorem, as we present it, is an enhancement of the \textsc{top} Schoenflies theorem that has exceptional didactic value.\ss

In dimensions $>3$, it is tempting to believe that there is a well defined notion of \CSI/ for open oriented \textsc{cat} manifolds with just one end, one that is independent of auxiliary choices in our definition of \textnormal{CSI} -- notably that of a so-called \emph{flange} (see Section~\ref{s:csi}) in each summand, or equivalently that of a proper homotopy class of maps of $[0,\infty)$ to each summand. It has been known since the 1980s~\cite{geoghegan} that such a proper homotopy class is unique whenever the fundamental group system of connected neighborhoods of infinity is Mittag-Leffler (this means that the system is in a certain sense equivalent to a sequence of group surjections). More recently~\cite[pp.~369--371]{geoghegan}, it has been established that there are uncountably many such proper homotopy classes whenever the Mittag-Leffler condition fails; given one of them, all others are classified by the non-null elements of the (first) derived projective limit of the fundamental group system at infinity. This interesting classification does not readily imply that rechoice of flanges can alter the underlying manifold isomorphism type of a \CSI/ sum in the present context; however, in a future publication, we propose to show that it can indeed.\ss

A classification of \textsc{cat} \emph{multiple} codimension~$1$ hyperplane embeddings in $\mathbb{R}^{m}$, for $m\neq3$, will be established in Section~\ref{s:multiplehyperplanes} showing they are classified by countable simplicial trees with one edge for each hyperplane. This result is called the Multiple Hyperplane Linearization Theorem, or 
MHLT for short (see Theorem~\ref{docilethm}). For \textsc{top} and $m>3$, its proof requires the Slab Theorem of C.~Greathouse~\cite{greathouse2}, for which we include a proof, that (inevitably) appeals to the famous Annulus Theorem. For dimension
$m=2$, MHLT can be reduced to classical results of Schoenflies and K\'er\'ekjart\'o which imply a classification of all separable contractible surfaces with nonempty boundary. See end of Section~\ref{s:multiplehyperplanes} for an outline and the lecture notes~\cite{siebenmann08} for the details.
However, we explain in detail a more novel proof that uses elementary Morse-theoretic methods to directly classify \textsc{diff} multirays in $\mathbb{R}^2$ up to ambient isotopy (see Theorem~\ref{mrt} and Remark~\ref{radrays2d}). The same method can be used to make our $2$-dimensional results largely bootstrapping.\ss

The high dimensional MHLT (Theorem~\ref{docilethm}) is the hitherto unproved result that brought this article into being! Indeed, the first two authors queried the third concerning an asserted classification for $m>3$ in Theorem~10.10, p.~117 of~\cite{siebenmannthesis}, that is there both unproved and misstated. This simplicial classification is used in~\cite{calcutking} to make certain noncompact manifolds real algebraic.\ss

As is often the case with a general notion, particular cases of \textnormal{CSI}, sometimes called \emph{end sum}, have already appeared in the literature. Notably, R.E.~Gompf~\cite{gompf} used end sum for \textsc{diff} $4$-manifolds homeomorphic to $\mathbb{R}^4$ and R.~Myers~\cite{myers} used end sum for $3$-manifolds. The present paper hopefully provides the first general treatment of \textnormal{CSI}. However, we give at most fleeting mention of \CSI/ for dimension $2$, because, on the one hand, its development would be more technical (non-abelian, see Remark~\ref{radrays2d} and~\cite{stallings62groups}), and on the other, its accomplishments are meager.\ss

This paper is organized as follows. Section~\ref{s:csi} defines \CSI/ and states its basic properties. Section~\ref{s:regnhbds} is a short discussion of certain \textsc{cat} regular neighborhoods of noncompact submanifolds. Sections~\ref{s:raysgaskets} and~\ref{s:proofbasiccsiprops} prove the basic properties of \CSI/. Section~\ref{s:csicantrellstallings} uses \CSI/ to prove the Cantrell-Stallings hyperplane unknotting theorem (=~HLT, Theorem~\ref{t:ourunknottingtheorem}). Section~\ref{s:rayunknot} applies results of Homma and Gluck to \textsc{top} rays to derive Cantrell's HLT (=~Theorem~\ref{maintheorem} for \textsc{top}). Section~\ref{singmultrays} studies proper maps and proper embeddings of multiple copies of $[0,\infty)$. Section~\ref{s:multiplehyperplanes} classifies embeddings of multiple hyperplanes (=~MHLT, Theorem~\ref{docilethm}). It includes an exposition of C.~Greathouse's Slab Theorem,
and in conclusion some possibly novel proofs of the $2$-dimensional MHLT and related results classifying contractible $2$-manifolds with boundary.\ss

The reader interested in proofs of the
$2$-dimensional versions of the main theorems
HLT (Theorems~\ref{t:ourunknottingtheorem} and~\ref{maintheorem}) and MHLT (Theorem~\ref{docilethm}) will want to read the later
parts of Section~\ref{s:multiplehyperplanes}. There, three very
different proofs are discussed, all
independent of \CSI/.  The one that 
is also relevant to higher dimensions is
a Morse theoretic study of rays; for it, read
$2$-MRT (Theorem~\ref{mrt}).\ss

We authors believe the best way to assimilate the coming sections is to proceed as we did in writing them: namely, at an early stage, attempt to grasp in outline the proof in Section~\ref{s:csicantrellstallings} of the central theorem HLT (Theorem~\ref{t:ourunknottingtheorem}), and only then fill in the necessary foundational material. Later, pursue some of the interesting side-issues lodged in other sections.

\section{\textnormal{CSI}: Connected Sum at Infinity}\label{s:csi}

Connected sum at infinity \textnormal{CSI} will now be defined for suitably equipped, connected \textsc{cat} manifolds of the same dimension\footnote{Dimensions $\leq 2$ seem to lack enough room to make \textnormal{CSI} a fruitful notion.} $\geq 3$. The most common forms of connected sum are the usual connected sum \textnormal{CS} and connected sum along boundary \textnormal{CSB}; we assume some familiarity with these. All three are derived from disjoint sum by a suitable geometric procedure that produces a new connected \textsc{cat} manifold. \textnormal{CSI} is roughly what happens to manifold interiors under \textnormal{CSB}.\ss

Recall that, to ensure well definition, \textnormal{CS} and \textnormal{CSB} both require some choices and technology, particularly for \textsc{top}. \textnormal{CS} requires choice of an embedded disk and appeals to an ambient isotopy classification of them; for \textsc{top} this classification requires the (difficult) Stable Homeomorphism Theorem (=~SHT), which will be discussed in Section~\ref{s:multiplehyperplanes}. \textsc{CSB} requires distinguished and oriented boundary disks where the \textnormal{CSB} is to take place. Since any \textnormal{CSB} operation induces a \textnormal{CS} operation of boundaries, it is clear that the extra boundary data for \textnormal{CSB} is essential for its well definition -- as dimension 3 already shows.\footnote{For example, let $X={S^{1}}\times{D^{2}}$ and $Y=X-\text{Int}D^{3}$ where $D^{3}$ is a small round disk in $\Int X$. The CSB operation on X and Y can produce two manifolds with non-homeomorphic boundaries.} The definition of \textnormal{CSI} has similar problems, and this imposes the notion of a \emph{flange}, which we define next.\ss

In any \textsc{cat}, connected, noncompact $m$-manifold $M$, one can choose a \textsc{cat}, codimension~$0$, proper, \emph{oriented} submanifold $P\subset\Int{M}$ that is \textsc{cat} isomorphic to the closed upper half space $\mathbb{R}^{m}_{+}$. For example, $P$ can be derived from a suitably defined \emph{regular neighborhood} of a \dfb{ray} $r$, where a ray is by definition a (proper) \textsc{cat} embedding of $[0,\infty)$. Such a $P$ with its orientation is called a \dfb{CSI flange}, or (for brevity) a \dfb{flange}. The pair $(M,P)$ is called a \dfb{CSI pair} or synonymously a \dfb{flanged manifold}. Often a single alphabetical symbol like $N$ will stand for a flanged manifold; then $\left|N\right|$ will denote the underlying manifold (flange forgotten). Thus, when $N = (M,P)$, one has $\left|N\right| := M$.\ss

In practice, rays and flanges are usually obvious or somehow given by the context, even in dimension $3$ where rays can be knotted. For example:\ss

\Item(i)! If $M$ is oriented (or even merely oriented near infinity) it is to be understood that the CSI flange orientation agrees with that of $M$ --- unless this requirement is explicitly waived.

\Item(ii)! If $M$ is a compact manifold with a connected boundary, then $\Int \, M$  has a preferred ray up to ambient isotopy; it arises as a fiber of a collaring of $\Bd M$ in $M$; this is because of a well known collaring uniqueness up to (ambient) isotopy that is valid in all three categories, cf.~\cite{kirbysiebenmannbook}.

\Item(iii)! With the data of (ii), suppose $\partial{M}$ is oriented. Then the preferred class of rays from (ii) and the isotopy uniqueness of regular neighborhoods (see Section~\ref{s:regnhbds}) provide a preferred (oriented) flange for $\Int M$ that is well defined up to ambient isotopy of $\Int M$. On the other hand, if $\partial{M}$ is non-orientable, then an ambient isotopy of $M$ can reverse the orientation of a regular neighborhood in $M$ of any point of $\partial{M}$; hence in this case also there is an (oriented) flange for $\Int M$ that is well defined up to ambient isotopy of $M$.

\Item(iv)! If $N$ has dimension $\leq 3$ and is isomorphic to the interior of a compact manifold with connected boundary, then once again $N$ has a preferred ray up to isotopy; this is because $N$ is irreducible near $\infty$ and irreducible h-cobordisms of dimension $\leq 3$ are products with $[0,1]$ (see~\cite{hempel}).\smallskip

A second ingredient for a \textnormal{CSI} sum of $m$-manifolds will be
a so-called \emph{gasket}.  The prototypical gasket is a
\dfb{linear gasket}; this is by definition a closed
subset of a certain model $\mathbb{H}^m$  of hyperbolic $m$-space
whose frontier is a nonempty collection of at most countably many disjoint codimension $1$ hyperplanes (see Figure~\ref{gaskets}).
\begin{figure}[h!]
\centering
\subfigure[\emph{A $2$-dimensional gasket.}]
{
    \label{gasket2D}
    \includegraphics[scale=0.8]{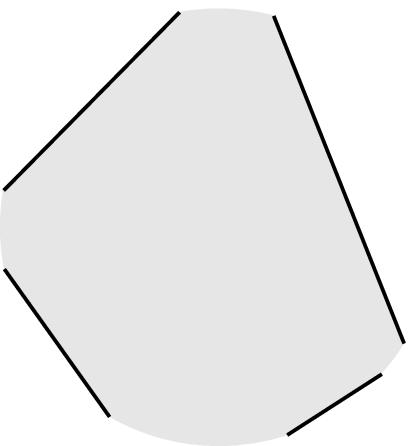}
}
\hspace{1cm}
\subfigure[\emph{A $3$-dimensional gasket.}]
{
    \label{gasket3D}
    \includegraphics[scale=0.75]{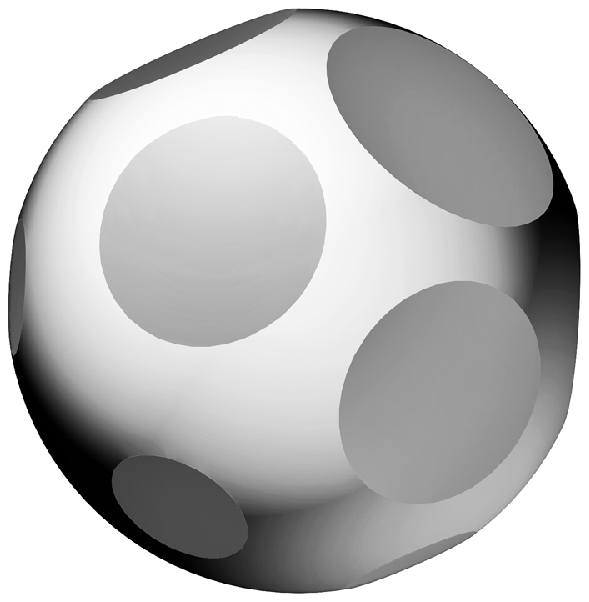}
}
\captionsetup{labelformat=ss,textfont=it}
\caption{Linear gaskets.}
\label{gaskets}
\end{figure}
We adopt Felix Klein's projective model of hyperbolic space; in
it, $\mathbb{H}^m$ is the open unit ball in $\mathbb{R}^m$, and each
codimension 1 hyperbolic hyperplane is by definition a nonempty
intersection with $\mathbb{H}^m$ of an affine linear $(m-1)$-plane in
$\mathbb{R}^m$. A \dfb{gasket} is by definition any oriented \textsc{cat}
$m$-manifold that is degree +1 \textsc{cat} isomorphic to a linear
gasket.\ss

\begin{remark}
A linear gasket is clearly simultaneously an 
oriented manifold of all three categories. The 
hyperbolic structure of $\mathbb{H}^m$ will occasionally be helpful.
However it can be treacherous for \textsc{pl}, since its
isometries are not all \textsc{pl}; they are projective
linear but mostly not affine linear (not even piecewise). Thus our mainstay will be
the \textsc{cat} structures inherited from $\mathbb{R}^m$.
\end{remark}

Consider an indexed set $\mu_i =(M_i, P_i)$ of \textnormal{CSI} pairs of dimension $m$,
where $i$ ranges over a nonempty finite or countable index set $S$.
The \textnormal{CSI} operation yields a \textnormal{CSI} pair $\omega =(W,Q)$
by the following construction (see Figure~\ref{csiop}).
\begin{figure}[h!]
	\centerline{\includegraphics[scale=0.8]{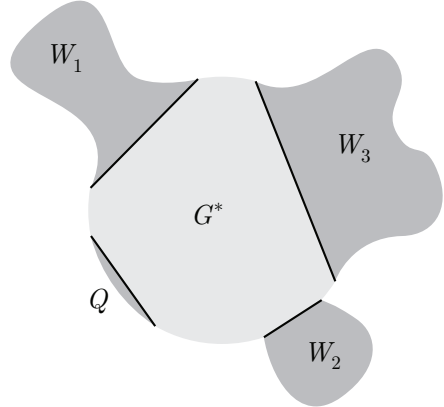}}
	\captionsetup{labelformat=ss,textfont=it}
	\caption{\textnormal{CSI} operation.}
	\label{csiop}
\end{figure}

Let $G^{\ast}$ be a linear gasket of the same dimension $m$,
with $\left|S\right|+1$ boundary components.
Each closed component of the complement of $G^{\ast}$ in $\mathbb{H}^m$
is a \textsc{cat} flange. We choose one, 
say $Q$, and write $G$ for the gasket $G^{\ast} \cup Q$. The flange
$Q$ will become the flange of $\omega$.\ss

A pair that is \textsc{cat} isomorphic to $(G,Q) := (G^{\ast} \cup Q,Q)$ as above will be called a \dfb{flanged gasket}. Equivalently, any CSI pair $(G',Q')$ where $G'$ and $G' - \Int{Q'}$ are both \textsc{cat} gaskets is by definition a flanged gasket.\ss

$W$ will now be formed by introducing identifications
in the disjoint sum:
\begin{equation}\label{disjointsum}
	\bigsqcup \hspace{-0.5mm} \left\{M_i \mid i\in S \right\} \ \sqcup \ G. \tag{\dag}
\end{equation}

We index by $S$ the $\left|S\right|$ components of $\partial{G}$, denoting
them by $H_i$, $i \in S$, and choose, for each, a \textsc{cat} degree
+1 embedding $\theta_i : P_i \to G^{\ast}$ onto an open collar
neighborhood of $H_i$ in $G^{\ast}$. Now form $W$ from the disjoint
sum~\eqref{disjointsum} by identifying $P_i$ to its image in $G^{\ast}$ under
$\theta_i$. Finally, $\omega := (W,Q)$ is by definition a \dfb{CSI sum} of the CSI pairs $\mu_i$, $i\in S$. \ss

We will call $G$ and $G^{\ast}$ respectively the 
\dfb{coarse gasket} and the \dfb{fine gasket} of the 
\textnormal{CSI} sum $\omega = (W,Q)$.

\begin{remark}
As a topological space, $W$ is somewhat more simply
expressed as the quotient space of the disjoint sum
\[
	\bigsqcup \hspace{-0.5mm} \left\{M_i - \Int{P_i} \mid i\in S \right\} \ \sqcup \ G
\]
by the identifications
\[
	\theta_i |_{\partial{P_i}} : \partial{P_i} \to H_i.
\]
In the \textsc{pl} category, these identifications induce a unique \textsc{pl}
manifold structure on $W$. But in the \textsc{diff}
category, the full collarings $\theta_i$ serve to provide a well defined differentiable manifold structure on $W$.
\end{remark}

\begin{theorem}\label{p:csiproperties}
The \textnormal{CSI} of a nonempty but countable (or finite) set of \textnormal{CSI} pairs of dimension $m\geq3$ enjoys the following properties: \smallskip

\Item(1)! From such a set $(M_{i},P_{i})$, $i \in S$, the \textnormal{CSI} construction above yields a \textnormal{CSI} pair $(W,Q)$ that is well defined up to \textsc{cat} isomorphism. Given a second such construction whose entries are distinguished by primes, a bijection $\varphi :S \to S'$, and, for each $i \in S$, an isomorphism of \textsc{cat} \textnormal{CSI} pairs $\psi_{i}:(M_{i},P_{i}) \to (M'_{\varphi(i)},P'_{\varphi(i)})$, there exists a \textsc{cat} isomorphism $\psi:(W,G,Q) \to (W',G',Q')$ that extends $\psi_{i}$ restricted to $M_{i}-\Int P_{i}$ for all $i \in S$. Furthermore, this $\psi$ is degree $+1$ as a map $G\to G'$, and induces an isomorphism of \textnormal{CSI} pairs $(W,Q) \to (W',Q')$. Thus, in addition to being well defined, the \textnormal{CSI} operation is commutative.

\Item(2)! The composite \textnormal{CSI} operation is associative.

\Item(3)! The \textnormal{CSI} operation has an identity element $\varepsilon = (\mathbb{R}^{m},\mathbb{R}^{m}_{+})$, and the infinite \textnormal{CSI} product $\varepsilon \varepsilon \varepsilon \cdots$ of copies of $\varepsilon$ is isomorphic to $\varepsilon$.
\end{theorem}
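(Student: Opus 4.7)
The plan is to reduce all three properties to three uniqueness ingredients: (a) uniqueness of oriented CAT flanges in a fixed connected CAT $m$-manifold up to orientation-preserving ambient CAT isotopy, which follows from the uniqueness of regular neighborhoods of rays (Section~\ref{s:regnhbds}) plus some understanding of rays up to ambient isotopy; (b) uniqueness of oriented linear gaskets $(G, G^*, Q)$ with a prescribed cardinality $|S|+1$ of boundary components (and of the distinguished flange $Q$) up to orientation-preserving CAT isomorphism, with arbitrary prescribed matching of the enumerations of $H_i$; and (c) uniqueness up to ambient CAT isotopy of orientation-preserving CAT collar embeddings $\theta_i: P_i \to G^*$ onto a collar neighborhood of $H_i$ fixing the remaining $H_j$. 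Ingredient (c) is the standard CAT collar uniqueness theorem; ingredient (b) is the core gasket uniqueness result, to be proved by explicit construction.

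Granted (a)--(c), part~(1) goes as follows. Given isomorphisms $\psi_i : (M_i, P_i) \to (M'_{\varphi(i)}, P'_{\varphi(i)})$ and two CSI constructions producing $(W,G,Q)$ and $(W',G',Q')$, ingredient (b) yields an orientation-preserving CAT isomorphism $G \to G'$ sending $Q$ to $Q'$ and $H_i$ to $H'_{\varphi(i)}$. Using (c), I then adjust this gasket isomorphism so that, near each $H_i$, it intertwines the collar embedding $\theta_i$ with $\theta'_{\varphi(i)} \circ \psi_i$. Then the $\psi_i$ restricted to $M_i - \Int P_i$ and the adjusted gasket isomorphism agree on the identified boundary pieces $\partial P_i \equiv H_i$, and so glue into a CAT isomorphism $\psi : (W,G,Q) \to (W',G',Q')$ of flanged manifolds. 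Commutativity of CSI is the special case in which each $\psi_i$ is the identity and $\varphi$ is an arbitrary permutation of $S$, where the required automorphism of $G$ is again supplied by (b).

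Part~(2) can be handled by realizing both $(\mu_1 \cs \mu_2) \cs \mu_3$ and $\mu_1 \cs (\mu_2 \cs \mu_3)$ as a single CSI along a fine gasket with three non-distinguished boundary components, obtained by gluing two single-flange gaskets along a shared flange; that this union is still a gasket (with the combined boundary hyperplane count) is a direct affine-geometric verification inside $\mathbb{H}^m$. Ingredient (b) then shows the two single-gasket presentations are CAT isomorphic, respecting $Q$ and the matching of $H_i$, and part~(1) upgrades this to an isomorphism of CSI sums. For part~(3), the gasket for $\mu \cs \varepsilon$ has two non-distinguished boundary components and is therefore CAT isomorphic to the slab $\mathbb{R}^{m-1} \times [-1,1]$; gluing $P \subset M$ onto one side simply extends the collar of $\partial P$ back into $P$, while the $\mathbb{R}^m_+$ attached to the other side becomes the new $Q$ and is identified via the extended collar with the original $P$. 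The result is an isomorphism $\mu \cs \varepsilon \cong \mu$. For $\varepsilon\varepsilon\varepsilon\cdots$, the fine gasket $G^*$ has countably many boundary hyperplanes in $\mathbb{H}^m$, and filling each in with a half-space recovers all of $\mathbb{H}^m \cong \mathbb{R}^m$ with the chosen $Q$ playing the role of $\mathbb{R}^m_+$.

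The main obstacle will be ingredient (b), particularly for countably infinite $|S|$ and in the PL or DIFF category. The hyperbolic isometries that permute configurations of affine hyperplanes are projective linear and typically not PL, so the required isomorphism must be constructed by hand. My plan is to treat the finite case first by induction on $|S|$, producing the isomorphism as a composition of elementary moves that drag one boundary hyperplane past another through the interior of the gasket using compactly supported CAT homeomorphisms of $\mathbb{H}^m$ (and verifying they preserve the gasket structure). For countable $|S|$, I would then take an infinite composition of such compactly supported moves, arranged in an Eilenberg-swindle pattern so that any compact subset of $\mathbb{H}^m$ meets only finitely many nontrivial factors; this makes the limit well defined as a CAT self-homeomorphism realizing the desired bijection of boundary components.
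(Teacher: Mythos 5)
Your high-level skeleton (reduce everything to a gasket uniqueness/boundary-extension statement plus collar adjustments at the glued hyperplanes) parallels the paper's, but the proof you propose for your key ingredient (b) does not work as described. In the finite case you want to carry one linear gasket onto another by a finite composition of compactly supported CAT homeomorphisms of $\mathbb{H}^m$; such a composition is itself compactly supported, hence is the identity near the ideal sphere $S^{m-1}$, whereas two distinct hyperbolic hyperplanes differ arbitrarily close to $S^{m-1}$ --- so no such composition can carry $H_i$ onto a different $H'_i$, and the induction cannot even begin. In the countable case, your well-definedness criterion for the infinite composition --- every compact set meets only finitely many supports --- is not sufficient: in $\mathbb{R}$, maps supported in $[k-1,k+2]$, $k=1,2,\ldots$, each pushing points one unit forward, satisfy that criterion, yet the successive images of a fixed point never stabilize and the infinite composition has no pointwise limit. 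What the paper actually does (Lemmas~\ref{l:radrays} and~\ref{l:gasketuniqueness}) is: exhibit each closed complementary half-space $V_i$ of a linear gasket as a CAT regular neighborhood of a radial ray, move the rays onto one another by an infinite composition of ambient isotopies that are \emph{radial extensions} of isotopies of $S^{m-1}$ --- these preserve the euclidean norm, so every point of a compact set is moved only finitely often, and their supports are shells receding to the ideal boundary --- and then invoke ambient uniqueness of CAT regular neighborhoods to carry each $V_i$ onto $V'_i$. Some such mechanism (not compactly supported moves plus bare local finiteness) is needed to make (b) true for countably many boundary components; note also that the ray-matching step is exactly where $m\geq 3$ enters, a point your sketch never uses.

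Two further gaps are worth naming. First, in part (1) your collar adjustment needs more than collar uniqueness (which is an isotopy fixing the boundary): the discrepancy between $\Phi\circ\theta_i$ and $\theta'_{\varphi(i)}\circ\psi_i$ along $H'_{\varphi(i)}\cong\mathbb{R}^{m-1}$ is an arbitrary degree $+1$ CAT automorphism, and to absorb it into a collar you must know it is CAT isotopic to the identity. This is elementary for \textsc{pl} and \textsc{diff}, but for \textsc{top} it is the Stable Homeomorphism Theorem, which the paper deliberately avoids by its refined definition of \textsc{top} CSI (preferred \textsc{diff} structures on flanges and gaskets, all identifications \textsc{diff}); your proposal does not address the \textsc{top} case at all. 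Second, part (2) in the paper's sense concerns composite CSI operations indexed by arbitrary countable rooted trees of flanged gaskets --- the infinite regroupings such as $(ab)(cd)\cdots\cong a(bc)(de)\cdots$ are precisely what the later swindle uses --- and the substantive content is Lemma~\ref{l:TGisgasket}, that a tree union of countably many gaskets is again a gasket, whose proof needs the properness bookkeeping $(\ast)$--$(\ast\ast\ast)$; your binary, three-summand affine verification covers only the finite case.
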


Precise definitions of composite \textnormal{CSI} operations and of their associativity
are given below in Section~\ref{s:proofbasiccsiprops}.\ss

\begin{notation}
Theorem~\ref{p:csiproperties} justifies the following notations for \CSI/ sums. If $\mathcal{M}$ is a nonempty but countable collection of flanged manifolds,
then $\CSI/(\mathcal M)$ can denote the flanged manifold resulting from the \CSI/ operation applied to these manifolds. And, in case $\mathcal M$ is an ordered sequence  $M_1$, $M_2$, $\ldots$, then 
$\CSI/(M_1,M_2,\ldots)$ and $\CSI/(\mathcal M)$ should be synonymous. An alternative to $\CSI/(M_1,M_2,...)$ introduced by Gompf~\cite{gompf} is
$M_1 \csinf M_2 \csinf \cdots$.
\end{notation}

\begin{remark}
In Theorem~\ref{p:csiproperties}, it is already striking that every infinite \textnormal{CSI} product yields a well defined \textnormal{CSI} pair (up to isomorphism). Nothing so strong is true for \textnormal{CS} or \textnormal{CSB} unless artificial limitations are imposed on the infinite connected sum operation. For example, in dimensions $m \geq 2$, an infinite \textnormal{CS} of any closed, connected, oriented $m$-manifold with itself could reasonably be defined so as to have any conceivable end space -- to wit any nonempty compact subset of the Cantor set.
\end{remark}

\begin{remark}\label{csiandcs} For \textsc{cat} = \textsc{diff} and \textsc{pl}, as observed in remarks at the beginning of this section, the interior of a \textsc{cat} compact $m$-manifold with nonempty connected boundary, has a privileged choice of flange
(up to ambient isotopy and orientation reversal). This lets us perceive some
near overlap of \textnormal{CSI} with the 
ordinary connected sum \textnormal{CS} as follows.  
Let us suppose $M$ is the connected sum $M_1 \cs M_2 \cs \cdots\cs M_k$ of a
\emph{finite} collection  $M_1,\ldots,M_k$ of
oriented connected closed $m$-manifolds, then \hbox{$M - (\textnormal{point})$} is 
\textsc{cat} isomorphic, preserving
orientation, to the flanged and oriented manifold $M'_1 \csinf M'_2 \csinf \cdots \csinf M'_k$ where  $M'_i$ 
is the manifold \hbox{$M_i - (\textnormal {point})$}
with a flange chosen whose orientation agrees
with that of $M_i$.   The reader is left to further
explore such relations between 
\textnormal{CSI} and \textnormal{CS}.
\end{remark}

\begin{remark}\label{flangeorientation} The last
remark above leads us to simple examples where 
reversal of a flange orientation changes the
underlying proper homotopy type of the CSI of two
flanged manifolds.  

It is a familiar fact that,
if $M$ is the complex projective plane (of real
dimension $4$), the ordinary connected sum $M
\cs (-M)$  has a signature zero cup product
bilinear form on the cohomology group
\[
\mathrm{H}^2(M \cs (-M)\,;\,\Z)     
              = \Z^2,\]
whilst  $M \cs M$ has form of signature $+2$ (the
sign $+$ becoming $-$ if we replace $M$ by $-M$).
It follows that $M \cs M$ and $M \cs -M$ 
are not homotopy equivalent.

Let $N$ be $M -\hbox{(point)}$, the complement of a
point in $M$, and forget the orientation of $N$,
but then consider two flanges $P_+$  and $P_-$ for
$N$ whose orientations agree with those of  $M$
and $-M$ respectively.  By Remark~\ref{csiandcs},
the CSI of $(N,P_+)$ and $(N,P_-)$ is  
 $(M \cs -M) - \hbox{(point)})$ 
 whose Alexandroff one-point compactification is 
 $(M \cs - M)$.  On the  other hand, the CSI of
$(N,P_+)$ and $(N,P_+)$  is $(M \cs M) - \hbox{(point)})$
whose one-point compactification is $(M \cs M)$. There
cannot be a \emph{proper} homotopy equivalence between
\[
 (M \cs -M) - \hbox{(point)}\quad \hbox{and}\quad 
       (M \cs M) - \hbox{(point)}
\]
because its one-point compactification
would  clearly be a homotopy equivalence
between $M \cs -M$ and $M \cs M$, which does
not exist.
\end{remark}

The proof of Theorem~\ref{p:csiproperties} will be mostly elementary. There is one important exception: the \textsc{top} version as presently stated requires the difficult Stable Homeomorphism Theorem (=~SHT) of~\cite{kirby69,freedmanquinn,edwards} to show that any homeomorphism of $\mathbb{R}^{m-1}$ is isotopic to a linear map. In contrast, for \textsc{cat}=\textsc{pl} or \textsc{cat}=\textsc{diff}, it is elementary that every \textsc{cat} automorphism of euclidean space is \textsc{cat} isotopic to a linear map (for \textsc{pl} see~\cite{rourkesanderson}, and for \textsc{diff} see~\cite[p.~34]{milnor97}).\ss

Happily, this dependence on a difficult result can and will be removed. Our tactic is to refine the definition of \textnormal{CSI} for \textsc{top} \textbf{requiring henceforth} (unless the contrary is indicated) that:
\smallskip

\noindent$\bullet$ The \textnormal{CSI} flange $P$ in each \textnormal{CSI} pair $(M,P)$ shall carry a preferred \textsc{diff} structure making $P$ \textsc{diff} isomorphic to $\mathbb{R}^{m}_{+}$, and, with respect to such structures, every \textnormal{CSI} pair isomorphism shall be \textsc{diff} on the flanges.
\smallskip

\noindent$\bullet$ Every gasket shall be equipped with a \textsc{diff} structure making it \textsc{diff} isomorphic to a linear gasket, and all of the identifications made in \textnormal{CSI} constructions shall be \textsc{diff} identifications with respect to these preferred \textsc{diff} structures.\ss

The magical effect of this refined definition is that the proof for \textsc{diff} of the basic properties of \textnormal{CSI} applies without essential changes to the \textsc{top} category. This is rather obvious if one thinks of \textsc{top} \textnormal{CSI} as being \textsc{diff} where all of the relevant action takes place. Consequently, for many cases of Theorem~\ref{p:csiproperties} we give little or no proof for the \textsc{top} category -- leaving the reader to do his own soul searching. Note that the above refinement could equally use \textsc{pl} in place of \textsc{diff}.

\section{Regular Neighborhoods}\label{s:regnhbds}

Regular neighborhoods will play a central technical role throughout this article. A short discussion of such \textsc{cat} neighborhoods, just sufficient for our uses, is given below.

\nss{PL Regular Neighborhoods}

\textsc{pl} regular neighborhood theory is a major feature of \textsc{pl} topology that is entirely elementary but not always simple. Such a theory was first formulated by J.H.C.~Whitehead \cite{whitehead39}, and then simplified and improved by E.C.~Zeeman~\cite{zeeman,hudsonzeeman} (see also~\cite{rourkesanderson}). We need the version of this theory that applies to possibly noncompact \textsc{pl} spaces; it is developed in~\cite{scott}. We now review some key facts.\ss

Let $X$ be a closed \textsc{pl} subspace of the \textsc{pl} space $M$. Neither is assumed to be compact, connected, nor even a \textsc{pl} manifold. Recall that $X$ is a subcomplex of some \textsc{pl} triangulation of $M$ by a locally finite simplicial complex. A regular neighborhood $N$ of $X$ can be defined to be a closed $\varepsilon$-neighborhood ($\varepsilon<0.5$) of $X$ in $M$ for the barycentric metric of some such triangulation of $M$.  The frontier of $N$ in $M$ is thus \textsc{pl} bicollared in $M$.\ss
	
We quickly recite some familiar facts. Any two regular neighborhoods $N$ and $N'$ of $X$ in $M$ are ambient isotopic fixing $X$. If $N_0$ is a regular neighborhood that lies in the (topological) interior $\int \, N$ of $N$ in $M$, then the triad
$(N - \int \, N_0; \delta N_0, \delta N)$
is \textsc{pl} isomorphic to the product triad $\delta N\times([0,1]; 0, 1)$ where $\delta$ indicates frontier in $M$.
Thus, if $N_0$ is contained in $\int \, N \cap \int \, N'$, and $U$ is a neighborhood of 
$N \cup N'$ in $M$, then the ambient isotopy carrying $N$ to $N'$ can be the identity on $N_0$ and on the complement of $U$.\ss

We will also use (in some special cases) two less familiar facts, namely Propositions~\ref{propA} and~\ref{propB}.

\begin{proposition}\label{propA}
If $N_i$ is a regular neighborhood
of $X_i$ in $M_i$ for $i=1$ and $i=2$, then 
 $N_1 \times N_2$ is a regular neighborhood of 
 $X_1 \times X_2$ in $M_1 \times M_2$. \qed
\end{proposition}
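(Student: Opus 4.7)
The plan is to exhibit $N_1\times N_2$ as a closed $\varepsilon$-neighborhood of $X_1\times X_2$ in the barycentric metric of some locally finite \textsc{pl} triangulation of $M_1\times M_2$ containing $X_1\times X_2$ as a subcomplex, possibly only after an ambient \textsc{pl} isotopy rel $X_1\times X_2$; the isotopy uniqueness of \textsc{pl} regular neighborhoods recalled just above will then absorb the discrepancy. By hypothesis, $N_i$ is the closed $\varepsilon_i$-neighborhood ($\varepsilon_i<0.5$) of $X_i$ in the barycentric metric of a locally finite simplicial triangulation $T_i$ of $M_i$ with $X_i$ a subcomplex, and I am free to replace each $N_i$ by any regular neighborhood \textsc{pl}-isotopic to it rel $X_i$.

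Step one is to form the regular CW product $T_1\times T_2$ on $M_1\times M_2$ and simplicially subdivide each product cell $\sigma_1\times\sigma_2$ compatibly on faces and without introducing new vertices (for instance by a shuffle triangulation of $\Delta^p\times\Delta^q$), obtaining a locally finite simplicial triangulation $T$ of $M_1\times M_2$ in which $X_1\times X_2$ appears as a subcomplex. Step two is a local comparison on each closed product cell $\sigma_1\times\sigma_2$ meeting $X_1\times X_2$: the slice $(N_1\times N_2)\cap(\sigma_1\times\sigma_2)$ is the polyhedral region cut out by $s_1\le\varepsilon_1$ and $s_2\le\varepsilon_2$, where $s_i$ denotes the linear ``mass away from the $X_i$-face'' coordinate on $\sigma_i$, while the $\varepsilon$-neighborhood of $X_1\times X_2$ in $T$'s own barycentric metric is a similarly star-shaped polyhedral neighborhood of the $X_1\times X_2$-face in the same cell. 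Both are regular neighborhoods of that face in $\sigma_1\times\sigma_2$; a relative, cell-by-cell application of the \textsc{pl} regular-neighborhood uniqueness from~\cite{scott} then yields an ambient \textsc{pl} isotopy of $M_1\times M_2$ fixing $X_1\times X_2$ and carrying $N_1\times N_2$ onto an honest $\varepsilon$-neighborhood for a suitable $\varepsilon<0.5$.

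The main obstacle is the mismatch between the $L^{\infty}$-product of the two factor barycentric metrics and the barycentric metric of the shuffle subdivision $T$: the simplices of $T$ have vertices mixing the two coordinate systems, so even the corner example $[0,1]^2$ with $X_1\times X_2=\{(0,0)\}$ shows that the $\varepsilon$-neighborhood in $T$ is not literally equal to $N_1\times N_2$. To obtain a usable sandwich of regular neighborhoods one may need to pass to one or two further derived subdivisions of $T$ and choose $\varepsilon$ sufficiently small. Once such a sandwich is arranged within each product cell, the local finiteness of $T_1$ and $T_2$ guarantees that the cellwise isotopies assemble into a single ambient \textsc{pl} isotopy rel $X_1\times X_2$ defined on all of $M_1\times M_2$, completing the argument.
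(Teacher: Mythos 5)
The paper states this proposition without proof (it is a standard fact of \textsc{pl} regular neighborhood theory), so your attempt has to stand on its own; and as written it has a genuine gap exactly at the step that carries all the weight. You reduce everything to a comparison inside each product cell $\sigma_1\times\sigma_2$ and then claim that a ``relative, cell-by-cell application'' of regular neighborhood uniqueness produces a single ambient \textsc{pl} isotopy of $M_1\times M_2$ rel $X_1\times X_2$. But uniqueness applied independently inside each closed cell gives isotopies that have no reason to agree on the shared faces of adjacent cells; an isotopy of $\sigma_1\times\sigma_2$ that moves points of its frontier is incompatible with whatever its neighbor does there, and one that fixes the frontier cannot in general carry one slice onto the other, because the two neighborhoods do not already coincide on the frontier. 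The only way to make such a scheme work is an induction over the skeleta of the product triangulation, first matching the two neighborhoods on lower-dimensional strata and then extending cell by cell using a \emph{relative} uniqueness theorem (uniqueness rel a subpolyhedron near which the neighborhoods already agree), together with isotopy extension. That induction is the actual content of the argument and it is entirely absent; your appeal to local finiteness of $T_1$ and $T_2$ only addresses the infinite assembly of supports, not the compatibility of the pieces. There are also smaller untreated points feeding into the same step: $X_i\cap\sigma_i$ need not be a single face unless you first arrange $X_i$ to be full in $T_i$ (derived subdivision), and for cells in which $X_1\times X_2$ meets only the frontier you need the relative (boundary) version of regular neighborhood theory, not the statement quoted from~\cite{scott}.

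A cleaner route, and presumably the reason the paper records the statement with a bare \qed, is to avoid the cellwise metric comparison altogether: characterize regular neighborhoods intrinsically (a closed neighborhood collapsing to the subpolyhedron, in the locally finite setting of~\cite{scott}, equivalently a derived neighborhood after subdivision as in~\cite{hudsonzeeman}), note that a product of collapses is a collapse, so $N_1\times N_2\searrow X_1\times N_2\searrow X_1\times X_2$, and conclude that $N_1\times N_2$ is a regular neighborhood of $X_1\times X_2$; the translation back into the paper's $\varepsilon$-neighborhood definition is then a single application of uniqueness in $M_1\times M_2$, not infinitely many cellwise ones. If you want to keep your direct approach, you must either prove the skeletal induction sketched above or replace it by this collapsing argument; as it stands the proof does not go through.
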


\begin{proposition}\label{propB}
Let $N$ be a properly embedded
$m$-submanifold of a \textsc{pl} $m$-manifold $M$ such that
$N \subset \Int\,M$, and let $X$ be a properly
embedded \textsc{pl} subspace of $M$ with $X \subset N$.
Then a sufficient condition for $N$ to be a regular
neighborhood of $X$ in $M$ is that $(N,X)$ be \textsc{pl}
isomorphic to a pair $(N',X')$ where $N'$ is a
regular neighborhood of $X'$ in a \textsc{pl} manifold
$M'$. \qed
\end{proposition}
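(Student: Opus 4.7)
The plan is to isolate the facts about $(N',X')$ that are intrinsic to the pair, transport them across the given \textsc{pl} isomorphism to $(N,X)$, and then use them together with the ambient data already available in $M$ to recognize $N$ as a regular neighborhood. First, I would recall that a regular neighborhood $N'$ of $X'$ in $M'$ necessarily has $X'$ in its manifold interior $\Int N'$ and admits a \textsc{pl} collapse $N'\searrow X'$; both of these properties depend only on the \textsc{pl} structure of the pair $(N',X')$, not on how it sits in $M'$. Thus the hypothesized \textsc{pl} isomorphism $(N',X')\to(N,X)$ transports them to the statements $X\subset\Int N$ and $N\searrow X$.

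Next, I would verify the remaining ambient hypotheses that the setup in $M$ supplies for free. Since $N$ is a properly embedded \textsc{pl} $m$-submanifold of $M$ with $N\subset\Int M$, the frontier $\delta N$ of $N$ in $M$ coincides with the manifold boundary $\partial N$ and is \textsc{pl} bicollared in $M$ by the standard \textsc{pl} collar theorem; in particular $\Int N$ agrees with the topological interior $\int N$ of $N$ in $M$, so $X\subset\int N$. Now I would choose a \textsc{pl} triangulation of $M$ in which both $N$ and $X$ are subcomplexes and, for a sufficiently fine derived subdivision and some $\varepsilon<0.5$, form the closed barycentric $\varepsilon$-neighborhood $N_0$ of $X$ described in the excerpt; this $N_0$ is a regular neighborhood of $X$ in $M$, and shrinking $\varepsilon$ arranges $N_0\subset\int N$.

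Finally, I would argue that $N$ and $N_0$ are ambient-isotopic in $M$ fixing $X$, so that $N$ inherits the regular-neighborhood property from $N_0$. Both $N_0$ and $N$ are now codimension-$0$ \textsc{pl} submanifolds of $\Int M$ that contain $X$ in their interior, have bicollared frontier in $M$, and \textsc{pl} collapse to $X$. The main obstacle, and the technical heart of the argument, is to combine the collapses $N\searrow X$ and $N_0\searrow X$ (together with $N_0\subset\int N$) into a relative collapse $N\searrow N_0$ and then invoke the classical ``collapsible \textsc{pl} cobordism between bicollared frontiers is a product'' lemma of \textsc{pl} topology (cf.\ \cite{hudsonzeeman,rourkesanderson,scott}) to conclude that the triad $(N-\int N_0;\delta N_0,\delta N)$ is \textsc{pl} isomorphic to $\delta N_0\times([0,1];0,1)$. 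Given this collar, $N$ is obtained from $N_0$ by thickening outward along $\delta N_0$, and the ambient isotopy uniqueness of regular neighborhoods recalled in the excerpt carries $N_0$ onto $N$ fixing $X$; since that ambient isotopy preserves the property of being a regular neighborhood of $X$ in $M$, it follows that $N$ itself is such a regular neighborhood.
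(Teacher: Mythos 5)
The paper offers no argument for Proposition~\ref{propB}: the tombstone in its statement indicates that it is quoted as a standard fact of (noncompact) \textsc{pl} regular neighborhood theory, to be extracted from \cite{scott} (cf.\ \cite{hudsonzeeman,rourkesanderson} in the compact case). So your proposal must stand on its own, and it has a genuine gap exactly at the step you yourself call the technical heart. First, the collapses $N\searrow X$ and $N_0\searrow X$ do not combine into a relative collapse $N\searrow N_0$: there is no splicing or excision rule of that kind, and in the compact case the statement you actually need (a manifold neighborhood of $X$ that collapses to $X$ is a regular neighborhood, equivalently that the region between $\delta N_0$ and $\delta N$ is a product) \emph{is} the simplicial neighborhood/recognition theorem, so you have reduced Proposition~\ref{propB} to something at least as strong without proving it. Second, and worse, the setting here is irreducibly noncompact ($X$ is a ray and $N$ a half-space in the intended application, which is the whole point of Propositions~\ref{propA}--\ref{propC}), so any collapse is an infinite one, and for infinite collapses the lemma you invoke (``a collapsible cobordism with bicollared frontiers is a product'') is false without properness hypotheses: a suitably triangulated quadrant $[0,\infty)\times[0,\infty)$ admits an infinite collapse onto the compact boundary arc $[0,1]\times 0$ (indeed onto a point), yet it is neither compact nor a collar on that arc. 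Taming such phenomena (proper collapses, expansion by compact collapses) is precisely the content of Scott's infinite regular neighborhood theory that the proposition is implicitly citing; note also that the product-triad fact recalled in Section~\ref{s:regnhbds} assumes \emph{both} $N_0$ and $N$ are already regular neighborhoods, so appealing to it for $N$ would be circular.

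Two smaller points. Your opening claim that a regular neighborhood $N'$ of $X'$ in $M'$ automatically has $X'\subset\Int N'$ fails when $X'$ meets $\partial M'$: a regular neighborhood of a boundary point of $\mathbb{R}^m_+$ is a half-ball containing the point in its manifold boundary. In fact Proposition~\ref{propB} as literally stated tacitly needs a hypothesis of this kind (a \textsc{pl} ball in $\mathbb{R}^m$ paired with a point of its boundary sphere is isomorphic to that half-ball pair, yet is not a regular neighborhood of the point), so this step supplies an assumption rather than deriving one; it is harmless in the intended applications, where $X'$ lies in $\Int M'$. Finally, the closing appeal to ambient isotopy uniqueness is circular as phrased, since uniqueness applies only once both sets are known to be regular neighborhoods of $X$; what you want instead is to push $N_0$ onto $N$ by an explicit ambient \textsc{pl} homeomorphism supported in the collar region together with an inner bicollar of $\delta N_0$ chosen (by normality) to miss $X$, and then note that such a homeomorphism carries a regular neighborhood to a regular neighborhood by transporting the defining triangulation. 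That repair is routine; the missing content is the collar-structure step above.
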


\begin{proposition}\label{propC}
If $\rho : [0,\infty) \to \mathbb{R}^{m}_{+}$ is a proper linear ray embedding with
image $r$ in  $\Int\,\mathbb{R}^m_+$, then $\mathbb{R}_+^m$ is \textsc{pl} 
isomorphic fixing $r$ to a regular neighborhood of
$r$ in $\mathbb{R}^m_+$.
\end{proposition}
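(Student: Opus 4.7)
My plan is to exhibit an explicit \textsc{pl} self-embedding $\varphi$ of $\mathbb{R}^m_+$ whose image is a regular neighborhood of $r$ and which pointwise fixes $r$.

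First I would reduce to standard position: by a linear automorphism of $\mathbb{R}^m_+$ (which is \textsc{pl}), I may assume $r = \{0\}^{m-1} \times [1,\infty)$ in $\mathbb{R}^{m-1} \times [0,\infty) = \mathbb{R}^m_+$. Then define
\[
\varphi(y,z) \;=\; \bigl(y,\; z + \max(0,\, |y|_\infty - 1)\bigr).
\]
This map is manifestly \textsc{pl} (both $|y|_\infty$ and $\max(0,\cdot)$ are \textsc{pl}), injective (the $y$-coordinate is preserved and the $z$-coordinate is shifted by a function of $y$ alone), and fixes $r$ pointwise since $|0|_\infty = 0$. Its image
\[
N \;:=\; \varphi(\mathbb{R}^m_+) \;=\; \bigl\{(y,z) \in \mathbb{R}^m_+ : z \geq \max(0, |y|_\infty - 1)\bigr\}
\]
is a closed subset of $\mathbb{R}^m_+$ with \textsc{pl} bicollared frontier in $\mathbb{R}^m_+$ (the cone surface $\{z = |y|_\infty - 1,\; |y|_\infty \geq 1\}$) and deformation retracts onto $r$ (first shrink $y$ linearly to $0$, then clamp $\{0\} \times [0,\infty)$ at height $1$).

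The only remaining step is to verify that $N$ qualifies as a regular neighborhood of $r$ in the $\varepsilon$-neighborhood sense of the paper. For this I would invoke Proposition~B, using Proposition~A to supply the model: applying Proposition~A with $(M_1,X_1,N_1) = (\mathbb{R}^{m-1},\{0\},[-\varepsilon,\varepsilon]^{m-1})$ and $(M_2,X_2,N_2) = ([0,\infty),[1,\infty),[1-\varepsilon,\infty))$ produces a product regular neighborhood
\[
N_0 \;=\; [-\varepsilon,\varepsilon]^{m-1} \times [1-\varepsilon,\infty)
\]
of $r$ in $\mathbb{R}^m_+$ contained in $\Int\,\mathbb{R}^m_+$. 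A \textsc{pl} isomorphism of pairs $(N,r) \cong (N_0,r)$ combined with Proposition~B (applied in the ambient $\mathbb{R}^m \supset \mathbb{R}^m_+$, where both $N$ and $N_0$ lie in the interior) transfers the regular-neighborhood status from $N_0$ to $N$.

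The main obstacle is constructing the identification $(N,r) \cong (N_0,r)$, since the two regions differ near $\partial\mathbb{R}^m_+$ -- $N$ reaches down to $[-1,1]^{m-1} \times \{0\}$, while $N_0$ is a thin cylindrical tube lying well inside $\Int\,\mathbb{R}^m_+$. Both are abstractly \textsc{pl}-isomorphic to $\mathbb{R}^m_+$ (for $N$ tautologically via $\varphi^{-1}$, and for $N_0$ by an analogous radial warp), so they are \textsc{pl}-isomorphic as manifolds. To arrange that $r$ maps to $r$ pointwise, one uses the uniqueness of proper \textsc{pl} rays up to ambient isotopy in a contractible \textsc{pl} manifold, together with the fact that any \textsc{pl} self-homeomorphism of $r$ fixing its initial point extends trivially to an ambient self-homeomorphism of $\mathbb{R}^m_+$ via the product formula $(y,z) \mapsto (y, g(z))$.
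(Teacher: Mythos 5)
There is a genuine gap, and it sits exactly where the real content of Proposition~\ref{propC} lies. Your image $N=\varphi(\mathbb{R}^m_+)=\bigl\{(y,z): z\ge\max(0,|y|_\infty-1)\bigr\}$ meets $\partial\mathbb{R}^m_+$ in the $(m-1)$-cell $[-1,1]^{m-1}\times\{0\}$, and for that reason it cannot be a regular neighborhood of $r$ in $\mathbb{R}^m_+$. Indeed, in the paper's definition a regular neighborhood of $r$ is a closed $\varepsilon$-neighborhood ($\varepsilon<0.5$) in the barycentric metric of a triangulation having $r$ as a subcomplex; since $r\subset\Int\,\mathbb{R}^m_+$ while $\partial\mathbb{R}^m_+$ is a disjoint subcomplex (and $r$ is $1$-dimensional, so every point of $r$ has barycentric weight at least $1/2$ on some vertex of $r$), that model neighborhood misses $\partial\mathbb{R}^m_+$; and because any two regular neighborhoods of $r$ are ambient isotopic and ambient homeomorphisms of $\mathbb{R}^m_+$ preserve the boundary, \emph{every} regular neighborhood of $r$ in $\mathbb{R}^m_+$ is disjoint from $\partial\mathbb{R}^m_+$. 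You implicitly concede the problem when you apply Proposition~\ref{propB} ``in the ambient $\mathbb{R}^m$'': but then the conclusion you can extract is only that $N$ is a regular neighborhood of $r$ in $\mathbb{R}^m$, which is not the statement of Proposition~\ref{propC} (``a regular neighborhood of $r$ in $\mathbb{R}^m_+$''), and for your particular $N$ the stated conclusion is in fact false. (That weaker ``in some ambient manifold'' version would incidentally still feed into Proposition~\ref{propB} later, but it is not the proposition you were asked to prove.) The essential difficulty is precisely the step your vertical shear avoids: one must carry \emph{all} of $\mathbb{R}^m_+$, boundary collar included, into a tube about $r$ lying in $\Int\,\mathbb{R}^m_+$. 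The paper does this by sweeping the $2$-dimensional \textsc{pl} isomorphism $\theta:[\varepsilon,\infty)\times[1,\infty)\to[\varepsilon,1]\times[1,\infty)$ around $\partial B^{m-1}_\varepsilon$, producing an isomorphism of $\mathbb{R}^m_+$ (rel a neighborhood of $r$) onto $B_1^{m-1}\times[1,\infty)$, which is disjoint from the boundary; your $\varphi$ instead fixes $[-1,1]^{m-1}\times\{0\}$ pointwise and never confronts this.

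A secondary point: the closing appeal to ``uniqueness of proper \textsc{pl} rays up to ambient isotopy in a contractible \textsc{pl} manifold'' is not a fact you may quote (it fails for $m=3$, e.g.\ Fox--Artin rays), and it is unnecessary: under $\varphi^{-1}$, and under the analogous explicit warp of $N_0$, both copies of $r$ are linear rays in $\mathbb{R}^m_+$, so an affine automorphism matches the pairs. Repairing that, however, does not cure the main defect described above.
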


\begin{pfofC}
Adjusting $r$ by an affine linear
automorphism of $\mathbb{R}_+^m$, we may assume, without
loss of generality, that $r = 0 \times [2,\infty)$, where
the $0$ here denotes the origin of $\mathbb{R}^{m-1}= \partial{\mathbb{R}_+^m}$.\ss

For any real $\lambda > 0$ and integer $k >0$, let 
$B_\lambda^k :=[-\lambda,\lambda]^k$
and let $B_{<\lambda}^k := (-\lambda,\lambda)^k$.
Since each $B_{\lambda}^{m-1}$ is a regular neighborhood of
the origin, there exists a \textsc{pl} isomorphism for
any $\varepsilon \in (0,1) $~:
\[
\varphi : \mathbb{R}^{m-1} - B_{<\varepsilon}^{m-1} \to 
    [\varepsilon,\infty) \times  \partial{B_\varepsilon^{m-1}}
\]
extending the canonical identification
$\partial{B_\varepsilon^{m-1}} \cong \varepsilon \times \partial{B_\varepsilon^{m-1}}$.
Although $\varphi$ itself is not canonical, we regard it as an
identification.\ss

By Proposition~\ref{propA}, the product 
$B_1^{m-1}\times [1,\infty)$
is a regular neighborhood of $r$ in  $\mathbb{R}_+^m$. 
Thus, by Proposition~\ref{propB}, it certainly will suffice
to show that there exists, for some $\varepsilon\in(0,1)$, a \textsc{pl} isomorphism
\[
  h : \mathbb{R}^{m-1} \times [1,\infty) \to
  B_1^{m-1} \times [1,\infty) \tag{\dag}
\]
fixing $B_\varepsilon^{m-1} \times [2,\infty)$.\ss

For $\varepsilon\in (0,1)$, it is an elementary fact
about \textsc{pl} $2$-manifolds that there is a \textsc{pl}
isomorphism:
\[
 \theta: [\varepsilon,\infty) \times [1,\infty) 
   \to [\varepsilon,1] \times [1,\infty)
\]
fixing $\varepsilon \times [1,\infty)$.
Producting with the identity map of $\partial{B_\varepsilon^{m-1}}$,
and then extending by the identity over
$B_\varepsilon^{m-1} \times [1,\infty)$, we get the required
\textsc{pl} isomorphism $h$ for (\dag).  \qed
\end{pfofC}
	
\nss{DIFF Regular Neighborhoods}
	
There is a quite general elementary theory of smooth regular neighborhoods in \textsc{diff} manifolds. Unfortunately, it involves \textsc{pl}, is fastidious to develop, and currently occupies half of the monograph~\cite{hirschmazur} (see also~\cite{cairns67}). We therefore cobble together an ad hoc, but bootstrapping, notion of \textsc{diff} regular neighborhood for a \textsc{diff} ray $r$ in a \textsc{diff} $m$-manifold $M$ ($r$ is a proper \textsc{diff} embedded copy of $[0,\infty)$ in $M$). This notion will be derived from the well known notion of a tube about a submanifold and can be extended to most sorts of \textsc{diff} submanifolds.\ss
	
Let $p:V(r) \to r$ be the projection of a \textsc{diff} tube about the ray $r$. $V(r)$ is a \textsc{diff} submanifold of $M$ lying in $\text{Int}M$. It is a trivial \textsc{diff} bundle with projection $p$, fiber the unit $(m-1)$-disk, orthogonal group, and zero section the inclusion of $r$. It is not, however, a neighborhood of ${\partial}r = b$, nor a neighborhood of r itself. Also $V(r)$ has undesirable corners. To obtain an acceptable regular neighborhood of $r$, we trim $V(r)$ and add a cap along the butt end $p^{-1}(b)$ as follows (see Figure~\ref{diffrn}).
\begin{figure}[h!]
	\centerline{\includegraphics{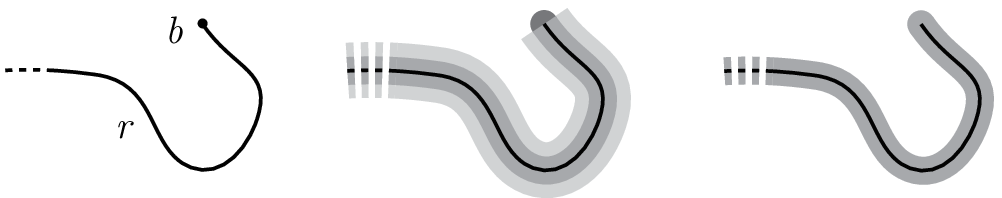}}
	\captionsetup{labelformat=ss,textfont={it,small},labelsep=newline,justification=centerlast,width=.88\textwidth}
	\caption{A \textsc{diff} regular neighborhood of a ray. The light gray (in center figure) indicates a tube about the ray $r$, and the dark gray (center and right) indicates the regular neighborhood of $r$.}
	\label{diffrn}
\end{figure}
Let $V'(r) \subset V(r)$ be the subbundle of disks of radius $1/2$. In the \textsc{diff} manifold with boundary (and corners) $M_{0} = M-\text{Int}V(r)$, the point $b = {\partial}r$ is a boundary point and the disk fiber $E^{m-1}$ of $V'$ at $b$ is a tube about $b$ in ${\partial}M_{0}$. There exists a tubular neighborhood $U(b)$ of $b$ in $M_{0}$. By \textsc{diff} tube uniqueness, we can arrange that $U(b) \cap {\partial}M_{0}$ coincides with $E^{m-1}$. Further, applying \textsc{diff} collaring existence and uniqueness to ${\partial}M_{0}$ in $M$, we can arrange that $T(r) = V'(r) \cup U(b)$ is smooth along ${\partial}E^{m-1}$, and hence is a \textsc{diff} submanifold of $M$ without corners. This $T(r)$ is, by definition, a \textsc{diff} \dfb{regular neighborhood} of $r$ in $M$.\ss
	
For a (proper) \textsc{diff} submanifold $L$, each component of which is a \textsc{diff} ray, we further define a \textsc{diff} regular neighborhood to be a \textsc{diff} codimension~$0$ submanifold that is a disjoint union of regular neighborhoods of the component rays of $L$.\ss
	
Ambient \textsc{diff} isotopy uniqueness of tubes and collars readily establishes ambient \textsc{diff} isotopy uniqueness of such \textsc{diff} regular neighborhoods. With some care, the isotopy can be kept fixed outside any open neighborhood of the union of two such regular neighborhoods.\ss
	
Observe that this definition makes it easy to see that $\mathbb{R}^{m}_{+}$ is a \textsc{diff} regular neighborhood of any affine linear ray in $\mathbb{R}^{m}$ that lies in the interior of $\mathbb{R}^{m}_{+}$. Indeed, the corresponding tube about $r$ can have spherical caps as fibers as shown in Figure~\ref{linrayrn}.
Note that Proposition~\ref{propC} above is the \textsc{pl} analogue of this fact.
\begin{figure}[h!]
	\centerline{\includegraphics{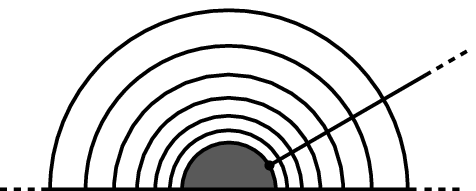}}
	\captionsetup{labelformat=ss,textfont={it,small},labelsep=newline,justification=centerlast,width=\textwidth}
	\caption{$\mathbb{R}^{m}_{+}$ is a \textsc{diff} regular neighborhood of an affine linear ray.}
	\label{linrayrn}
\end{figure}

\nss{TOP Open Regular Neighborhoods}
	
There is no simple elementary theory of \emph{closed} \textsc{top} regular neighborhoods. This deficiency will be overcome using a simple elementary notion of \emph{open regular neighborhood} that is adequate for proving the Cantrell-Stallings hyperplane unknotting theorem for \textsc{top} using \textnormal{CSI}. Incidentally, such open regular neighborhoods could serve in proving the \textsc{pl} and \textsc{diff} versions of the hyperplane unknotting theorem, in lieu of the more precise closed \textsc{cat} regular neighborhood theory.\ss
	
Let $W$, $X$, $Y$ and $Z$ be locally compact (but not necessarily compact!) metrizable spaces, where $Z$ is a closed subset of $W$. Consider a proper continuous surjection $f:X \to Z$ and define the \dfb{infinite radius mapping cylinder} $\text{Map}(f)$ to be the quotient of the disjoint union $X \times [0,\infty) \sqcup Z$ by the relation that identifies $(x,0)$ to $f(x) \in Z$ for all $x \in X$. Clearly, $Z$ is closed in $\text{Map}(f)$ and the open subset $X\times (0,\infty)$ is its complement. For $\rho > 0$, we define the \dfb{radius} $\rho$ \dfb{mapping cylinder} $\text{Map}_{\rho}(f)$ to be the quotient of $X \times [0,\rho] \sqcup Z$ in $\text{Map}(f)$ and also the open one $\text{Map}_{<\rho}(f)$ to be the quotient of $X \times [0,\rho)\sqcup Z$ in $\text{Map}(f)$.\ss
	
Let $g:Y \to Z$ be another such map (same target, but different source). Suppose that $\text{Map}(f)$ and $\text{Map}(g)$ are embedded, fixing $Z$, as open neighborhoods of $Z$ in $W$. Then, we have the following well known result, where $A \Subset B$ for sets in a space $W$ means that the closure of $A$ in $W$ is contained in the interior of $B$ in $W$.
	
\begin{theorem}[Open Mapping Cylinder Neighborhood Uniqueness]\label{t:omcnu}
If $\textnormal{Map}_1(f) \Subset \textnormal{Map}(g)$, then there exists a homeomorphism of $\textnormal{Map}(f)$ onto $\textnormal{Map}(g)$ that fixes pointwise $\textnormal{Map}_{1}(f)$. Consequently \[\textnormal{Map}(g) - \textnormal{Map}_{<1}(f)\] is homeomorphic to \mbox{$X\times [1,\infty)$} fixing $X\times 1$.
\end{theorem}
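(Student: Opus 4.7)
The plan is to build a homeomorphism $h:\textnormal{Map}(f)\to\textnormal{Map}(g)$ restricting to the identity on $\textnormal{Map}_{1}(f)$; once this is in hand, the stated consequence follows by restricting $h$ to the canonical cylinder $X\times[1,\infty)=\textnormal{Map}(f)-\textnormal{Map}_{<1}(f)$, whose image is then $\textnormal{Map}(g)-\textnormal{Map}_{<1}(f)$.

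First I would introduce continuous radial functions $r_{f}:\textnormal{Map}(f)\to[0,\infty)$ and $r_{g}:\textnormal{Map}(g)\to[0,\infty)$ built from the two mapping cylinder structures, each vanishing exactly on $Z$ and proper onto $[0,\infty)$, with the canonical trivializations $r_{f}^{-1}([a,b])\cong X\times[a,b]$ and $r_{g}^{-1}([a,b])\cong Y\times[a,b]$ for $0<a\le b$. Using the hypothesis $\textnormal{Map}_{1}(f)\Subset\textnormal{Map}(g)$ together with local compactness and properness of $r_{f},r_{g}$, I would interleave two strictly increasing sequences $1=t_{0}<t_{1}<\cdots\to\infty$ and $s_{0}<s_{1}<\cdots\to\infty$ satisfying, for every $n\ge 0$,
\[
\textnormal{Map}_{t_{n}}(f)\;\Subset\;\textnormal{Map}_{s_{n}}(g)\;\Subset\;\textnormal{Map}_{t_{n+1}}(f).
\]
A first corollary is that $\textnormal{Map}(f)$ and $\textnormal{Map}(g)$ coincide as subsets of $W$.

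Next, by induction on $n$, I would construct homeomorphisms $h_{n}:\textnormal{Map}_{t_{n}}(f)\to V_{n}$ onto open subsets $V_{n}\subset\textnormal{Map}(g)$ with $V_{n}\supset\textnormal{Map}_{s_{n-1}}(g)$, each $h_{n}$ extending its predecessor and fixing $\textnormal{Map}_{1}(f)$. The inductive step extends $h_{n}$ across the product slab $r_{f}^{-1}([t_{n},t_{n+1}])\cong X\times[t_{n},t_{n+1}]$ by identifying the leftover region $L_{n}:=V_{n+1}-\textnormal{int}_{W}(V_{n})\subset\textnormal{Map}(g)$ with $X\times[t_{n},t_{n+1}]$ in a manner matching the inner boundary $h_{n}(X\times\{t_{n}\})$. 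The desired homeomorphism is then $h=\lim h_{n}$, which stabilizes on each $\textnormal{Map}_{t_{k}}(f)$ after stage $k$ and maps onto $\bigcup_{n}V_{n}\supset\bigcup_{n}\textnormal{Map}_{s_{n}}(g)=\textnormal{Map}(g)$.

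The main obstacle is the collar-matching step, namely recognizing $L_{n}$ as a trivial $X$-cylinder with prescribed inner boundary. The plan is to exploit both mapping cylinder structures simultaneously: the $r_{g}$-flow on $\textnormal{Map}(g)-Z$ pushes $h_{n}(X\times\{t_{n}\})$ radially outward past $r_{g}^{-1}(s_{n})$ while staying inside $V_{n+1}$, and the bicollar of $h_{n}(X\times\{t_{n}\})$ in $W$ --- inherited from the $f$-cylinder structure just inside --- combined with standard collar uniqueness identifies the swept-out region as $X\times[t_{n},t_{n+1}]$. Once this collar-matching lemma is established, continuity of $h$, its surjectivity, and its restriction to the identity on $\textnormal{Map}_{1}(f)$ are routine.
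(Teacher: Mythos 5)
There is a genuine gap, and it occurs at both of the load-bearing points of your plan. First, the preliminary interleaving step is false for the given embeddings: the hypothesis $\textnormal{Map}_1(f)\Subset\textnormal{Map}(g)$ gives no control whatsoever over $\textnormal{Map}_t(f)$ for $t>1$, which may leave $\textnormal{Map}(g)$ entirely, so $\textnormal{Map}_{t_n}(f)\Subset\textnormal{Map}_{s_n}(g)$ need not hold for any $s_n$ once $t_n>1$; indeed even $\textnormal{Map}_1(f)\subset\textnormal{Map}_{s_0}(g)$ can fail for \emph{every} $s_0$ when $X$ is noncompact (the $g$-radius can be unbounded on $\textnormal{Map}_1(f)$ while $\Subset$ still holds). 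Your own ``first corollary,'' that $\textnormal{Map}(f)$ and $\textnormal{Map}(g)$ coincide as subsets of $W$, is exactly what an infinite interleaving would force, and it is false in the simplest examples (two nested but unequal cylinder neighborhoods of $Z$). The paper's proof only ever needs a \emph{finite} shuffle $\textnormal{Map}_1(f)\Subset\textnormal{Map}_1(g)\Subset\textnormal{Map}_2(f)\Subset\textnormal{Map}_2(g)$, and even that is obtained only after re-embedding $\textnormal{Map}(f)$ and $\textnormal{Map}(g)$ by automorphisms supported off $\textnormal{Map}_1(f)$ (citing Kwun--Raymond), not from the hypothesis alone.

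Second, the ``collar-matching lemma'' is where the real content of the theorem sits, and the tools you invoke do not supply it. The region between your current image $V_n$ and any set engulfing $\textnormal{Map}_{s_n}(g)$ is a cobordism whose outer end is governed by the $g$-structure, and such an $X$-to-$Y$ region need not be a product at all: as the paper's Remark (2) after the theorem notes, $X$ and $Y$ need not even be homeomorphic, so no collar-uniqueness argument can trivialize it. Flowing the frontier $S_n=h_n(X\times\{t_n\})$ outward along the $g$-cylinder direction does not sweep out a copy of $S_n\times[0,T]$ either, because $S_n$ is in general not a section over $Y$: a single flow line can meet it repeatedly, the swept set can fold back into $V_n$, and the resulting $V_{n+1}$ fails to be a product extension of $V_n$ rel $S_n$ and fails to contain $\textnormal{Map}_{s_n}(g)$. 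Arranging at each stage a region that is simultaneously a product $X$-cylinder attached along $S_n$ \emph{and} engulfs the next $g$-level is essentially equivalent to the theorem itself. The paper circumvents precisely this obstruction with the Eilenberg--Mazur swindle: after the finite shuffle it forms the cobordism $\gamma$ from $X\times1$ to $Y\times1$ and its complementary pieces $\gamma',\gamma''$, observes that only the \emph{composites} $\gamma\cdot\gamma'\cong\varepsilon_X$ and $\gamma'\cdot\gamma\cong\varepsilon_Y$ are products (being regions between two levels of the \emph{same} cylinder structure), and then compares the two infinite factorizations $\varepsilon_X\cdot\varepsilon_X\cdots$ and $\gamma\cdot\varepsilon_Y\cdot\varepsilon_Y\cdots$ of the complements of $\textnormal{Map}_{<1}(f)$; at no point does it need any individual $f$-to-$g$ region to be trivial. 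If you want to salvage a telescoping argument, you would have to build the invertibility of $\gamma$ into it, at which point you have reproduced the swindle.
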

	
\begin{remarks}
\Item(1)! Although $\textnormal{Map}_1(f)$ is clearly closed in $\textnormal{Map}(f)$, the conclusion of the theorem is false if $\textnormal{Map}_1(f)$ is not closed in $\textnormal{Map}(g)$, and this may occur even when $\textnormal{Map}(f)\subset\textnormal{Map}(g)$ as shown in Figure~\ref{closedCE}.
\begin{figure}[h!]
	\centerline{\includegraphics{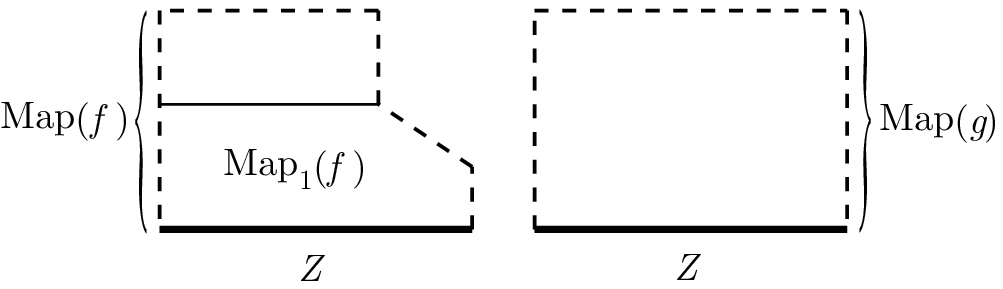}}
	\captionsetup{labelformat=ss,textfont={it,small},labelsep=newline,justification=centerlast,width=.88\textwidth}
	\caption{Example where $\textnormal{Map}(f)\subset\textnormal{Map}(g)$, but $\textnormal{Map}_1(f)$ is not closed in $\textnormal{Map}(g)$ and hence is not closed in any space $W$ containing $\textnormal{Map}(g)$. Here, $\textnormal{Map}(g)={(0,1)\times[0,\infty)}$, $Z={(0,1)\times0}$, and ${X=(0,1)}$.}
	\label{closedCE}
\end{figure}
On the other hand, there then always exists a self-homeomorphism $h$ of $\textnormal{Map}(f)$ such that $h(\textnormal{Map}_1(f))$ is closed in $W$.

\Item(2)! Even when $X$ and $Y$ are both \textsc{top} manifolds, the conclusion of this theorem does not imply that $X$ is homeomorphic to $Y$. Further, if they happen to be homeomorphic, $X \times 1$ is not in general ambient isotopic to $Y \times 1$ (see~\cite{milnor61} and the \textsc{top} invariance of simple homotopy type in~\cite{kirbysiebenmann69}, and Essay III of~\cite{kirbysiebenmannbook}).

\Item(3)! Theorem~\ref{t:omcnu} remains true if $X$, $Y$, and $Z$ are merely Hausdorff and paracompact~\cite[p.~260]{siebenmann73}, but we do not need this generality.
\end{remarks}

\begin{noqedproof}
The most appropriate proof to recall here is one using an infinite composition trick that is often called the {\textquoteleft}Eilenberg-Mazur swindle{\textquoteright} (see also~\cite{stallings62groups,stallings65}). Without loss we may assume that $W =\textnormal{Map}(g)$. After reembedding $\text{Map}(f)$ and $\text{Map}(g)$ into $W$ by suitable topological automorphisms of $\text{Map}(f)$ and $\text{Map}(g)$ respectively, with their supports disjoint from $\textnormal{Map}_{1}(f)$, we can assume that radius 1 and radius 2 mapping cylinders are shuffled as follows
\begin{equation}\label{e:mapshuffle}
\text{Map}_{1}(f) \Subset \text{Map}_{1}(g) \Subset \text{Map}_{2}(f) \Subset \text{Map}_{2}(g). \tag{\textasteriskcentered}
\end{equation}
Here one uses the local compactness and metrizability hypotheses (see~\cite{kwunraymond}).\ss

The triad $\gamma = (V;X \times 1,Y \times 1)$, where $V$ is $\text{Map}_{1}(g)$ minus the topological interior of $\text{Map}_{1}(f)$, can be regarded as a cobordism\footnote{In this context, cobordism means that the two subspaces of each triad are identified in the obvious way to $X$ or to $Y$. Cobordism isomorphism (indicated by $\cong$) means triad homeomorphism respecting these identifications.} from $X$ to $Y$ (see~\cite{milnor65}).
\begin{figure}[h!]
	\centerline{\includegraphics{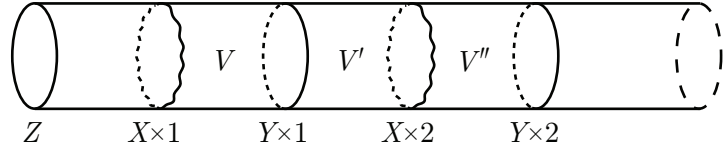}}
	\captionsetup{labelformat=ss,textfont=it}
	\caption{Shuffled mapping cylinders.}
	\label{mapcyl}
\end{figure}
The relations \eqref{e:mapshuffle} show that $\gamma$ has an inverse $\gamma' = (V';Y \times 1,X \times 2)$ viewed as a cobordism from $Y$ to $X$, where $V'$ is $\text{Map}_{2}(f)$ minus the topological interior of $\text{Map}_{1}(g)$ (see Figure~\ref{mapcyl}). In other words, the end to end cobordism composition $\gamma \cdot \gamma'$ is topologically the product cobordism $\varepsilon_{X}$ on $X$, written $\gamma \cdot \gamma' \cong \varepsilon_{X}$. Similarly, $\gamma'$ has an inverse $\gamma''=(V'';X\times 2,Y\times 2)$, where $V''$ is $\text{Map}_{2}(g)$ minus the topological interior of $\text{Map}_{2}(f)$, written $\gamma' \cdot \gamma''\cong \varepsilon_{Y}$. Using an obvious associativity, we see that $\gamma$ and $\gamma''$ are isomorphic cobordisms
\[
	\gamma 	\cong \gamma \cdot \varepsilon_{Y} \cong \gamma \cdot (\gamma' \cdot \gamma'') 
					\cong (\gamma \cdot \gamma') \cdot \gamma'' \cong \varepsilon_{X} \cdot \gamma'' \cong \gamma''.
\]
In particular, $\gamma' \cdot \gamma \cong \varepsilon_{Y}$.\ss
	
$\text{Map}(f)$ minus the interior of $\text{Map}_{1}(f)$ is (the body of) the infinite cobordism composition 
$\varepsilon_{X} \cdot \varepsilon_{X} \cdot \varepsilon_{X} \cdots$, while $\text{Map}(g)$ minus the interior of $\text{Map}_{1}(f)$ is the infinite composition $\gamma \cdot \varepsilon_{Y} \cdot \varepsilon_{Y} \cdot \varepsilon_{Y} \cdots$.
But, these are the same by the infinite product swindle, again using associativity
\begin{align}
\gamma \cdot \varepsilon_{Y} \cdot \varepsilon_{Y} \cdot \varepsilon_{Y} \cdots
	&\cong \gamma \cdot (\gamma' \cdot \gamma) \cdot (\gamma' \cdot \gamma) \cdot
		(\gamma' \cdot \gamma) \cdots \notag \\
	&\cong (\gamma \cdot \gamma') \cdot (\gamma \cdot \gamma') \cdot
		(\gamma \cdot \gamma') \cdots \notag \\
	&\cong \varepsilon_{X} \cdot \varepsilon_{X} \cdot \varepsilon_{X} \cdots. \notag \hspace{126 pt} \qed
\end{align}	
\end{noqedproof}

\section{Radial Ray and Linear Gasket Uniqueness}\label{s:raysgaskets}

In this section, \textsc{cat} will mean either \textsc{pl} or \textsc{diff}. We begin with a ray unknotting lemma for radial rays in $\mathbb{H}^{m}$. Let $L \cong \mathbb{Z}_{+} \times [0,\infty)$ be a proper \textsc{cat} embedded submanifold of $\mathbb{H}^{m}$ so that all rays $r_{i} = i \times [0,\infty)$ are \dfb{radial}, i.e. each ray is contained in a line through the origin in $\mathbb{R}^{m} \supset \mathbb{H}^{m}$, and is disjoint from the origin. In what follows, lengths come from the standard euclidean metric on $\mathbb{R}^{m}$. For each $i \in \mathbb{Z}_{+}$, let $d_{i}$ denote the distance from the origin in $\mathbb{R}^{m}$ to the initial point of $r_{i}$ parameterized by $i\times 0$, and let $p_{i}$ denote the limit point of $r_{i}$ in $S^{m-1} = {\partial} \overline{ \mathbb{H}}^{m}$, where 
$\overline{\mathbb{H}}^{m}$ is the unit ball $B^{m}$ that is the closure of $\mathbb{H}^{m}$ in $\mathbb{R}^{m}$.\ss

Let $L'$ be another such submanifold, and define $r'_{i}$, $d'_{i}$, and $p'_{i}$ in the same way. Also, let $f: L \to L'$ be a \textsc{cat} isomorphism. Notice that both sequences $d_{i}$ and $d'_{i}$ converge to 1 as $i \to \infty$ since $L$ and $L'$ are properly embedded.\ss

$S(t_{1},t_{2})$, with $0 < t_{1} < t_{2} \leq 1$, will denote the thickened sphere of points $x \in \mathbb{H}^{m}$ such that $t_{1} \leq \| x \| \leq t_{2}$.

\begin{lemma}[Radial Ray Uniqueness]\label{l:radrays}
With the above data, suppose $m\geq 3$. Then there is a \textsc{cat} ambient isotopy $h_{t}$ of $\mathbb{H}^{m}$, $0 \leq t \leq 1$, such that $h_{0}$ is the identity and $h_{1} |_{L} = f$.
\end{lemma}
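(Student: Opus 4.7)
The plan is a shell-by-shell construction that uses $d_i,d'_i\to 1$ to localize the work into concentric annular shells accumulating at $S^{m-1}=\partial\overline{\mathbb H}^m$, exploiting the codimension-$2$ room provided by $m\geq 3$.

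First, I reduce the combinatorics. Since $f\colon L\to L'$ is a proper \textsc{cat} isomorphism of countable disjoint unions of half-open rays, it preserves components and sends each initial point to an initial point and each ideal end to an ideal end. Hence there is a bijection $\sigma\colon\mathbb Z_+\to\mathbb Z_+$ with $f(r_i)=r'_{\sigma(i)}$ and $f(i\times 0)=\sigma(i)\times 0$. After reindexing $L'$ by $\sigma^{-1}$ I may assume $\sigma=\mathrm{id}$, so the task reduces to constructing an ambient \textsc{cat} isotopy carrying $r_i$ onto $r'_i$ with the correct parameterization, for every $i$.

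Next, set up the shell exhaustion. Choose $0<a_0<a_1<\cdots$ with $a_n\to 1$, each $a_n$ avoiding $\{d_i,d'_i\}$, and set $B_t:=\{x\in\mathbb H^m:\|x\|\leq t\}$, $S_n:=S(a_{n-1},a_{n+1})$, and $F_n:=\{i:d_i\leq a_n\text{ or }d'_i\leq a_n\}$. Properness of $L,L'$ makes each $F_n$ finite with $F_n\nearrow\mathbb Z_+$. The plan is to inductively build ambient \textsc{cat} isotopies $h^{(n)}_t$ of $\mathbb H^m$, each supported inside $S_n$, so that the composition $H^{(n)}_1:=h^{(n)}_1\circ\cdots\circ h^{(1)}_1$ satisfies $H^{(n)}_1|_{r_i\cap B_{a_n}}=f|_{r_i\cap B_{a_n}}$ for every $i\in F_n$. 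Consecutive shells $S_n,S_{n+1}$ meet only in $S(a_n,a_{n+1})$, so on any compact subset of $\mathbb H^m$ only finitely many $h^{(n)}_t$ act nontrivially; the infinite composition is therefore a well-defined \textsc{cat} ambient isotopy whose time-$1$ map agrees with $f$ on $L$.

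The heart of the argument is the inductive step. At step $n$, inside $\overline{S_n}$ I have a finite collection of \textsc{cat} arcs $\alpha_i:=H^{(n-1)}_1(r_i)\cap\overline{S_n}$ (for $i\in F_n$), whose inner endpoints on $\partial B_{a_{n-1}}$ already match the corresponding endpoints of $r'_i$ by the previous step; I must \textsc{cat}-isotope $\overline{S_n}$ rel its inner frontier to carry each $\alpha_i$ onto $r'_i\cap\overline{S_n}$ with matching parameterization. Because $m\geq 3$ gives codimension $\geq 2$, the two finite arc systems admit \textsc{cat} regular neighborhoods in $S_n$ that are each \textsc{cat} isomorphic to a common disjoint union of $(m-1)$-disk bundles over arcs; by the regular-neighborhood-uniqueness results of Section~\ref{s:regnhbds}, these neighborhoods are ambient isotopic in $S_n$ rel the required portions of $\partial S_n$, yielding the desired $h^{(n)}_t$, and a final $1$-dimensional reparameterization inside the matched neighborhood fixes the ray parameterizations. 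The main obstacle I anticipate is arranging that the endpoint matchings on $\partial B_{a_{n-1}}$ and on $\partial B_{a_{n+1}}$ (dictated by prior steps and by the finiteness of $F_n$) are compatible with an isotopy supported entirely inside $S_n$; this relative form of uniqueness is exactly where the hypothesis $m\geq 3$ is used decisively, and where the \textsc{pl} and \textsc{diff} arguments may diverge in technical detail.
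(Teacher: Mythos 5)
Your inductive step is not justified, and it fails exactly in the hardest case $m=3$. Carrying the arc system $\{\alpha_i\}$ onto $\{r'_i\cap\overline{S_n}\}$ is an \emph{unknotting} statement about two different arc systems in a shell, rel the inner sphere; the regular neighborhood uniqueness of Section~\ref{s:regnhbds} only says that two regular neighborhoods of the \emph{same} subspace are ambient isotopic, so it cannot move one system onto the other. Nor does ``codimension $\geq 2$'' supply the missing unknotting: arcs in an $m$-manifold have codimension $m-1$, and for $m=3$ this is codimension $2$, precisely where arcs in a $3$-dimensional shell can knot and link (general-position unknotting of $1$-complexes needs $2\cdot 1+2\leq m$, i.e.\ $m\geq 4$, as in Theorem~\ref{t:stableembedding}). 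Since Lemma~\ref{l:radrays} must hold for $m=3$ (it feeds Lemma~\ref{l:gasketuniqueness} and the CSI machinery there), your argument is unsupported in the case that matters most. The paper's proof never needs arc general position because it uses the radial structure, which your inductive step ignores: each ray is determined by its limit point $p_i\in S^{m-1}$ and its initial distance $d_i$, so the whole isotopy is assembled from isotopies of $S^{m-1}$ moving $p_i$ to $p'_i$ along paths missing the previously placed points (this is where $m\geq3$, i.e.\ $\dim S^{m-1}\geq 2$, is genuinely used --- and why the statement fails for $m=2$, where only the cyclic order is preserved), extended radially over thickened spheres whose supports escape to infinity.

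There is a second, structural, problem: your inductive requirement $H^{(n)}_1=f$ on $r_i\cap B_{a_n}$ with $h^{(n)}$ supported in $S(a_{n-1},a_{n+1})$ is unachievable in general. Nothing ties $d_i$ to $d'_i$, so $f$ may send points of $r_i\cap B_{a_n}$ far outward, e.g.\ outside $B_{a_{n+1}}$ when $d'_i>a_{n+1}$, whereas your composition keeps them inside $B_{a_{n+1}}$. The paper defuses exactly this issue by (i) a preliminary radial shrinking of $L$ so that $d_i\geq d'_i$ for all $i$, and (ii) aiming only for the setwise equality $h_1(r_i)=r'_i$ (first achieving $r_i\subset r'_i$, then stretching), with the parameterization repaired at the very end using the fact that every \textsc{cat} automorphism of $[0,\infty)$ is isotopic to the identity. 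If you want to rescue a shell-by-shell scheme, you would need both such a radial normalization of the initial points and a genuine matching lemma for radial arc systems in a shell (proved via the $S^{m-1}$-factor, not via regular neighborhoods); at that point you have essentially reconstructed the paper's argument.
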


\begin{remark}
This ambient isotopy of $\mathbb{H}^{m}$ cannot in general extend to an ambient isotopy of the ball $\overline{\mathbb{H}}^{m}$ since the accumulation points in ${\partial}\overline{\mathbb{H}}^{m}$ of $L$ would then be homeomorphic to those of $L'$. On the other hand, $p_{i}$ can be an arbitrary sequence of distinct points in ${\partial}\overline{\mathbb{H}}^{m}$; thus its set of accumulation points in ${\partial}\overline{\mathbb{H}}^{m}$ may be any nonempty compact subset.
\end{remark}

\begin{diffpfradrays}
Reindex the rays $r_{i}$ so that $f(r_{i}) = r'_{i}$. Since any \textsc{diff} automorphism of $[0,\infty)$ is isotopic to the identity, it will suffice to construct $h_{t}$ as above so that $h_{1}(r_{i}) = r'_{i}$. Reindexing rays, we can assume that $d_{i} \leq d_{i+1}$ for $i \in \mathbb{Z}_{+}$. It is elementary that $d_{i} \to 1$ as $i \to \infty$.\ss

A preliminary ambient isotopy sets the stage. Shrink the rays $r_{i}$ radially towards their limit points $p_{i}$ so that $d_{i} \geq d'_{i}$ (while maintaining $d_{i} \leq d_{i+1}$). This is straightforward using a regular neighborhood of $L$ in $\mathbb{H}^{m}$.\ss

Now, choose a \textsc{diff} simple path $\alpha_{1}$ in $S^{m-1}$ from $p_{1}$ to $p'_{1}$. This path obviously permits construction of an isotopy of $S^{m-1}$ supported near the path and taking $p_{1}$ to $p'_{1}$. Extending this isotopy radially gives an ambient isotopy of $S(d_1,1)$ taking the ray $r_{1}$ to a subset of $r'_{1}$ (recall, we arranged that $d_{i} \geq d'_{i}$). This ambient isotopy of $S(d_1,1)$ extends naturally to one of $\overline{\mathbb{H}}^{m}$ fixing the ball of radius $d_{1} - \varepsilon_{1}$ for any small $\varepsilon_{1} > 0$. At the end of this isotopy, any ray $r_{i}$, $i \geq 2$, that moved has image another radial ray of the same length which (abusing language) we still refer to as $r_{i}$ with endpoint $p_{i}$.\ss

Next, similarly form an isotopy of $S^{m-1}$ moving $p_{2}$ to $p'_{2}$ and having support missing $p'_{1}$. Extending radially to $S(d_{2},1)$ we get an ambient isotopy (fixing $r'_{1} \supset r_{1}$) taking $r_{2}$ to a subset of $r'_{2}$.\ss

Inductively form an isotopy of $S^{m-1}$ moving $p_{i}$ to $p'_{i}$ and with support disjoint from $p'_{j}$, for $1 \leq j \leq i-1$. Extend as before to get an ambient isotopy of $\overline{\mathbb{H}}^{m}$ with support in $S(d_{i}-\varepsilon_{i},1)$ taking $r_{i}$ to a subset of $r'_{i}$ while fixing $r'_{j} \supset r_{j}$, for $1 \leq j \leq i-1$. Here, $\varepsilon_{i}$ lies in $(0,d_{i})$ and $\varepsilon_{i}\to 0$ as $i \to \infty$. Also, since $d_{i} \to 1$ as $i \to \infty$, the points in any compact set in $\mathbb{H}^{m}$ are moved at most finitely many times. Hence, the time interval composition of all of these ambient isotopies provides a well defined ambient isotopy of $\mathbb{H}^{m}$ (but usually  not one of $\overline{\mathbb{H}}^{m}$). We now have $r_{i} \subset r'_{i}$ for all $i \in \mathbb{Z}_{+}$. A final ambient isotopy stretches each $r_{i}$ so that $d_{i} = d'_{i}$, finishing the proof for \textsc{diff}. \qed
\end{diffpfradrays}

\begin{plpfradrays}
Make a preliminary \textsc{pl} identification $\Theta$ of the thickened standard \textsc{pl} $(m-1)$-sphere $\Sigma^{m-1} \times (0,1)$ to the complement of the origin $\mathbb{H}^{m}-0$ in such a way that each component of $L$ and of $L'$ lies in a modified ray
\[
	\Theta\left((\textnormal{point})\times(0,1)\right)\subset \Theta\left(\Sigma^{m-1} \times (0,1)\right)=\mathbb{H}^{m}-0.
\]
Now, imitate the \textsc{diff} proof. \qed
\end{plpfradrays}

\begin{remark}
There is no such \textsc{pl} identification $\Theta$ that sends \emph{every} ray of the form $\left((\textnormal{point})\times(0,1)\right)\subset\Sigma^{m-1}\times(0,1)$ to a radial ray in $\mathbb{H}^{m}-0$,  not even when $m=2$. This is a corollary of the observation that the point preimages under any linear surjection $\mathbb{R}^{m} \to \mathbb{R}^{m-1}$ are the set of all lines in $\mathbb{R}^{m}$ \emph{parallel} to the kernel line. Thus, the construction of $\Theta$ must be adapted to $L$ and $L'$, for example by using a well chosen triangulation in which $L$ and $L'$ are $1$-subcomplexes.
\end{remark}

\bigskip

Combined with \textsc{pl} and \textsc{diff} regular neighborhood theory (see Section~\ref{s:regnhbds}), the above radial ray uniqueness lemma (Lemma~\ref{l:radrays}) will let us prove a linear gasket uniqueness lemma that we now formulate. Adopting the context and terminology established for Lemma~\ref{l:radrays}, fix a category \textsc{cat} to be \textsc{pl} or \textsc{diff}. Let $G$ be a linear gasket of dimension $m \geq 3$; i.e., a submanifold of $\mathbb{H}^{m}$ bounded by countably many disjoint hyperbolic hyperplanes $H_{i}$, $i \in \mathbb{Z}_{+}$. Let $G'$ be another such gasket of dimension $m$ and distinguish corresponding subsets by primes.

\begin{lemma}[Linear Gasket Uniqueness]\label{l:gasketuniqueness}
Given the data above, there is a \textsc{cat} ambient isotopy $g_{t}$ of $\mathbb{H}^{m}$, $0 \leq t \leq 1$, so that $g_{0}=\textnormal{id}|_{\mathbb{H}^{m}}$, $g_{1}(G) = G'$, and $g_{1}(H_{i}) = H'_{i}$ for all $i \in \mathbb{Z}_{+}$.
\end{lemma}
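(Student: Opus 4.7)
The plan is to reduce Lemma~\ref{l:gasketuniqueness} to the radial ray uniqueness result of Lemma~\ref{l:radrays} by attaching a distinguished radial ray to each bounding hyperplane in its flange, and then using the \textsc{cat} regular neighborhood theory of Section~\ref{s:regnhbds} to promote a ray-matching isotopy into a flange-matching one.

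First, for each $i$, let $F_i$ denote the closed component of $\mathbb{H}^m \setminus \Int G$ whose frontier is $H_i$; this $F_i$ is \textsc{cat}-isomorphic to $\mathbb{R}^m_+$. Pick a radial ray $r_i \subset \Int F_i$ (for example, a half-line along a line through the origin whose initial point lies slightly inside $F_i$ and whose other end limits to $\partial\overline{\mathbb{H}}^m$). Choose $r'_i \subset \Int F'_i$ analogously. Applying Lemma~\ref{l:radrays} to the radial multirays $L = \bigsqcup_i r_i$ and $L' = \bigsqcup_i r'_i$ in $\mathbb{H}^m$ furnishes a \textsc{cat} ambient isotopy of $\mathbb{H}^m$ sending $r_i$ onto $r'_i$ for every $i$. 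Replacing $G$ by its image under this isotopy, we may assume henceforth that $r_i = r'_i$ for all $i$.

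Next, each flange $F_i$ is a \textsc{cat} regular neighborhood of $r_i$ in $\mathbb{H}^m$: this follows from Proposition~\ref{propC} combined with Proposition~\ref{propB} in the \textsc{pl} case, and from the observation following Figure~\ref{linrayrn} in the \textsc{diff} case. The same holds for $F'_i$. By \textsc{cat} regular neighborhood uniqueness (relative to the common core $r_i$), there is then an ambient isotopy of $\mathbb{H}^m$ carrying $F_i$ onto $F'_i$, fixing a neighborhood of $r_i$ pointwise, and supported in any preassigned open neighborhood of $F_i \cup F'_i$.

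The main obstacle will be to assemble these per-index isotopies into a single ambient isotopy of $\mathbb{H}^m$, because the sets $F_i \cup F'_i$ generally overlap across different $i$, so the individual isotopies cannot be performed in parallel or in arbitrary sequence without disturbing already-matched flanges or other rays. I would handle this inductively, in the spirit of the proof of Lemma~\ref{l:radrays}. At stage $i$, assuming that $F_j$ has been matched to $F'_j$ for all $j < i$ (producing a common flange $F_j^*$), I would first shrink $F_i$ and $F'_i$ (rel $r_i$), using regular neighborhood uniqueness, to very thin regular neighborhoods $V_i \subset F_i$ and $V'_i \subset F'_i$ of $r_i$ chosen fine enough that a small open neighborhood of $V_i \cup V'_i$ is disjoint from $F_1^*, \dots, F_{i-1}^*$ and from all rays $r_k$ with $k > i$; then apply uniqueness to carry $V_i$ onto $V'_i$ with support in that small open set. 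Since the rays $r_i$ accumulate only on $\partial\overline{\mathbb{H}}^m$, any compact subset of $\mathbb{H}^m$ meets only finitely many flanges, so time-parametrizing the successive stages over a sequence of subintervals of $[0,1]$ converging to $1$ produces a well-defined \textsc{cat} ambient isotopy $g_t$ of $\mathbb{H}^m$ with $g_0$ the identity, $g_1(G) = G'$, and $g_1(H_i) = H'_i$ for every $i$.
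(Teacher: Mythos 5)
Your first two steps are exactly the paper's: the paper also attaches to each boundary hyperplane a radial ray (the ray orthogonal to $H_i$, shortened so that it lies in the interior of the complementary half-space $V_i$), observes via Section~\ref{s:regnhbds} that each complementary half-space is a \textsc{cat} regular neighborhood of its ray, and applies Lemma~\ref{l:radrays} to make the two ray families coincide. The only divergence is in how the infinitely many flange-matching isotopies are combined, and there your hand-rolled induction is both heavier than necessary and not fully justified as written. The paper needs no induction because the regular-neighborhood uniqueness of Section~\ref{s:regnhbds} is already an ambient statement for the whole multiray: for \textsc{diff}, a regular neighborhood of a multiray is by definition a disjoint union of regular neighborhoods of the component rays and ambient isotopy uniqueness is asserted for such unions; for \textsc{pl}, Scott's noncompact theory applies to the union of the rays as a single closed subpolyhedron, of which the union of the $V_i$ is a regular neighborhood. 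One application therefore carries all the flanges onto their targets simultaneously, which is the paper's ``(simultaneously)''.

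The concrete soft spot in your sequential assembly is the convergence of the infinite composition. Local finiteness gives you that any compactum meets only finitely many of the target flanges $F'_i$ and misses all but finitely many of the (fixed) rays, but the support of your stage-$i$ isotopy must contain the displaced points of $\Phi_{i-1}(F_i)\setminus F'_i$, where $\Phi_{i-1}$ is the accumulated isotopy from the earlier stages; nothing you impose prevents an earlier stage from dragging a later flange (which remains a regular neighborhood of its fixed core ray but need not stay metrically close to it) back into a fixed compact set, in which case the supports do not escape compacta and the time-reparametrized composition fails to define a homeomorphism at $t=1$. Repairing this requires a support-control discipline in the style of condition $(\ast)$ in the proof of Lemma~\ref{l:TGisgasket} or condition (\ddag) in the proof of Theorem~\ref{mrt}, which is awkward here precisely because each stage's support is forced to contain the displaced flange; the cleaner fix is the paper's single application of multiray regular-neighborhood uniqueness.
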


\begin{corollary}\label{c:gasketboundaryextension}
If $G$ and $G'$ are gaskets and $f: {\partial}G \to {\partial}G'$ is a degree $+1$ \textsc{cat} isomorphism of their boundaries, then $f$ extends to a \textsc{cat} isomorphism $F: G \to G'$. \qed
\end{corollary}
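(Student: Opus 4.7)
The plan is to combine Lemma \ref{l:gasketuniqueness} with a standard collar-and-isotopy extension of a boundary automorphism. By the definition of a gasket, I may reduce at once to the case that $G$ and $G'$ are linear gaskets in $\mathbb{H}^{m}$ with frontier hyperplanes $\{H_{i}\}$ and $\{H'_{i}\}$. Since $f$ is a \textsc{cat} isomorphism of boundaries, it induces a bijection $H_{i} \mapsto H'_{\sigma(i)}$ of components; after reindexing I may take $\sigma$ to be the identity. Lemma \ref{l:gasketuniqueness} then supplies a \textsc{cat} ambient isotopy $g_{t}$ of $\mathbb{H}^{m}$ with $g_{1}(G) = G'$ and $g_{1}(H_{i}) = H'_{i}$ for each $i$. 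Writing $\bar{f} := g_{1}^{-1} \circ f$, I obtain a degree $+1$ \textsc{cat} self-isomorphism of $\partial G$ that preserves each $H_{i}$; if I can extend $\bar f$ to a \textsc{cat} self-isomorphism $\tilde F : G \to G$, then $F := g_{1} \circ \tilde F : G \to G'$ is the sought-for extension of $f$.

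To construct $\tilde F$, I first build a \textsc{cat} isotopy of $\partial G$ from the identity to $\bar f$. Each $H_{i}$ is \textsc{cat} isomorphic to $\mathbb{R}^{m-1}$, and $\bar f|_{H_{i}}$ is orientation preserving because $\bar f$ has degree $+1$; by the elementary fact recalled just after Theorem \ref{p:csiproperties}, every \textsc{cat} ($=$ \textsc{pl} or \textsc{diff}) automorphism of Euclidean space is \textsc{cat} isotopic to a linear map, and an orientation-preserving linear map is in turn \textsc{cat} isotopic to the identity. Taking the disjoint union over $i$ of such isotopies yields a \textsc{cat} isotopy $h_{s}$, $s \in [0,1]$, of $\partial G$ with $h_{0} = \mathrm{id}$, $h_{1} = \bar f$, and $h_{s}(H_{i}) = H_{i}$ for all $s$ and $i$.

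Now pick a closed \textsc{cat} collar $c : \partial G \times [0,1] \hookrightarrow G$ of $\partial G$ in $G$ (standard in \textsc{pl} and \textsc{diff}) and a \textsc{cat} damping function $\phi : [0,1] \to [0,1]$ with $\phi(0) = 1$ and $\phi \equiv 0$ on $[1/2, 1]$, and define
\[
\tilde F(y) = \begin{cases} c\bigl(h_{\phi(t)}(x),\, t\bigr), & y = c(x,t),\ t \in [0,1], \\ y, & y \notin c(\partial G \times [0, 1/2)). \end{cases}
\]
Because $\phi \equiv 0$ on $[1/2, 1]$, both clauses give $y$ on the overlap $c(\partial G \times [1/2, 1])$, so $\tilde F$ is well defined and restricts on $\partial G$ to $h_{1} = \bar f$; its \textsc{cat} inverse is obtained by substituting $h_{\phi(t)}^{-1}$ for $h_{\phi(t)}$. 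The only real check is that $(x,t) \mapsto c(h_{\phi(t)}(x), t)$ is a \textsc{cat} isomorphism on the collar that glues smoothly to the identity at $t = 1/2$, which follows routinely from the \textsc{cat} smoothness of $h$ and $\phi$ together with the product structure of the collar. The genuine technical content of the corollary is thus packaged into the already-proved Lemma \ref{l:gasketuniqueness}; everything after it is bookkeeping and standard \textsc{cat} collar technology.
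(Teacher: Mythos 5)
Your proof is correct and takes essentially the same route the paper intends: Corollary~\ref{c:gasketboundaryextension} is presented there as an immediate consequence of Lemma~\ref{l:gasketuniqueness}, and your reduction to a component-preserving boundary self-isomorphism followed by the damped collar isotopy just makes explicit the standard bookkeeping the authors leave unstated. Your appeal to the elementary fact that orientation-preserving automorphisms of $\mathbb{R}^{m-1}$ are isotopic to the identity is legitimate here because Section~\ref{s:raysgaskets} restricts to \textsc{cat} $=$ \textsc{pl} or \textsc{diff}, exactly as your argument requires.
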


\begin{gasketuniquenessproof}
Without loss of generality, we assume that $G$ and $G'$ are linear gaskets in $\mathbb{H}^{m}$. The gasket $G$ determines a canonical \textsc{cat} submanifold $L \cong \mathbb{Z}_{+} \times [0,\infty)$ of $\mathbb{H}^{m}$ as follows: each hyperplane boundary component $H_{i}$, $i \in \mathbb{Z}_{+}$, of the gasket $G$ defines a proper radial ray $r_{i}$ in $\mathbb{H}^{m}$, namely the one orthogonal to $H_{i}$, having endpoint the point of $H_{i}$ closest to the origin in $\mathbb{R}^{m}$, and extending outwards from $G$. The union of these rays is defined to be $L$. For each $H_{i}$, let $V_{i}$ denote the closed complementary component of $\mathbb{H}^{m} - \text{Int}G$ with boundary $H_{i}$. If $r$ is any radial ray in $\mathbb{H}^{m}$, then let $s$ denote the radial ray obtained from $r$ by shrinking it outwards radially to be half as long (for the euclidean metric). Each $V_{i}$ is isomorphic to the closed upper half space $\mathbb{R}^{m}_{+}$ and is a \textsc{cat} regular neighborhood of $s_{i}$ as we have observed in Section~\ref{s:regnhbds}. Similarly, the gasket $G'$ canonically determines closed complementary components $V'_{i}$, rays $r'_{i}$, and shortened rays $s'_{i}$.\ss

After a preliminary isotopy provided by Lemma~\ref{l:radrays} (radial ray\break uniqueness), we may assume $r_{i} = r'_{i}$ for all $i \in \mathbb{Z}_{+}$, and hence $s_{i} = s'_{i}$ for all $i \in \mathbb{Z}_{+}$. By \textsc{cat} regular neighborhood ambient uniqueness, we may now ambiently isotop $V_{i}$ to $V'_{i}$ for all $i$ (simultaneously), completing the proof. \qed
\end{gasketuniquenessproof}

\begin{remark}
Lemmas~\ref{l:radrays} and~\ref{l:gasketuniqueness} hold also for a finite index set in place of $\mathbb{Z}_{+}$. One can deduce this from the case of $\mathbb{Z}_{+}$. Or, one can note that the same proofs apply.
\end{remark}

\begin{remark}\label{radrays2d}
As it is stated, Lemma~\ref{l:radrays} (radial ray uniqueness) fails in dimension $2$, even for three rays. Any set of distinct radial rays in $\mathbb{H}^2$ obviously inherits a natural cyclic order from that of their limit points on the circle $S^1$. The proof of Lemma~\ref{l:radrays} actually shows that such a collection of rays is determined up to ambient isotopy of $\mathbb{H}^{2}$ by the isomorphism class of its cyclic ordering. There are many such classes when the number of rays is infinite. For example, the number of rays with no immediate successor (or predecessor) is then an invariant. In fact, there are uncountably many such classes. We are confident that, taking account of this natural ray order, one can nevertheless define an associative \textnormal{CSI} operation for 2-manifolds. It is non-commutative in general for 2-manifolds with boundary.
\end{remark}

\section{Proof of Theorem~\ref{p:csiproperties}: Basic Properties of \CSI/}\label{s:proofbasiccsiprops}

\nss{Proof of Property~(1):\hfill\break Well Definition and Commutativity of \CSI/}

Let \textsc{cat} be \textsc{pl} or \textsc{diff}. Recall that with the data introduced for the statement of Property (1) of Theorem~\ref{p:csiproperties} above, we are seeking a certain sort of \textsc{cat} isomorphism of triples $\psi :(W,G,Q) \to (W',G',Q')$. On the closed complements of the gasket interiors, this $\psi$ is rigidly prescribed by the data; call this $\psi_{0}: W-\text{Int}G \to W'-\text{Int}G'$. This $\psi_{0}$ has degree $+1$ as a map ${\partial}G \to {\partial}G'$. Further, the $\psi$ we seek is prescribed up to isotopy on $Q$ as a degree $+1$ isomorphism $Q \to Q'$. Thus, denoting by $H$ and $H'$ the fine gaskets $G - \text{Int}Q$ and $G' - \text{Int}Q'$, it suffices to extend $\psi|: {\partial}H \to {\partial}H'$ to a \textsc{cat} degree $+1$ isomorphism $H \to H'$ of the fine gaskets. This extension exists by Corollary \ref{c:gasketboundaryextension}. \qed \ss

We explain the notion of a \dfb{countable indexed set} $\mathcal{M}$ of flanged manifolds. It consists of a set $I$ that is finite or countably infinite, and a map of $I$ into the class of flanged $m$-manifolds. The set $I$ is called the \dfb{index set} and, in what follows, will always be a subset of $\mathbb{N}$ or of $\mathbb{N}^2$. If we write $\mathcal{M} = \left\{M_i \mid i\in I\right\}$, then the 
flanged manifold corresponding to $i \in I$ is $M_i$.
It is not always required that $M_i$ and $M_j$ be disjoint or even distinct when $i \neq j$ in $I$. Thus one can also similarly define an indexed set in any class --- in place of the class of flanged manifolds --- for example in the class of \textsc{cat} isomorphism classes of flanged manifolds.
\vfill\newpage

\nss{Composite CSI Operations and Associativity}

To elucidate associativity, we must make its meaning more precise. Our proof of the Cantrell-Stallings hyperplane unknotting theorem uses only a simple (but infinite) associativity which is expressible in traditional algebraic notation. But, the \textnormal{CSI} operation enjoys a natural associativity that is at once more general and equally straightforward to establish. Some tree combinatorics will be involved. More specifically, we introduce what we call a \dfb{tree of flanged gaskets}. The category \textsc{cat} in which we work here is again \textsc{pl} or \textsc{diff}, and the manifold dimension $m$ will be $\geq 3$.\ss 

A \dfb{rooted tree} will mean a countable simplicial
tree (not necessarily locally finite) that has a
distinguished vertex $v_0$ called the \dfb{root}. In such a
tree, there is a natural orientation of the edges.
Indeed, from each vertex $v\neq v_0$ there is
a unique oriented edge $vv'$ joining $v$ to a vertex $v'$
strictly nearer to the root vertex in the obvious 
simplicial path metric.\ss

A \dfb{tree of $m$-dimensional flanged gaskets} is a
rooted abstract tree $\mathcal{G}$ whose vertices and edges are
given as as follows:
\smallskip

\Item(1)! The vertex set of $\mathcal{G}$ is a finite or
countable indexed set $\left\{G_i \mid i\in I\right\}$ of disjoint
$m$-dimensional flanged gaskets. The flange of $G_i$ is
denoted $F_i$ and the root vertex of $\mathcal{G}$ is
denoted $G_0$. Furthermore, the boundary components of
$G_i$ are indexed as $H_{i,j}$, $j\in J_i$.

\Item(2)! There is a unique oriented edge of $\mathcal{G}$
joining any vertex $G_i\neq G_0$ to the
unique adjacent vertex (= flanged gasket) $G_{i'}$ that
is nearer to $G_0$. This edge is presented as an
ordered  pair $\left(G_i,H_{i',j}\right)$ where, as the notation
indicates, $H_{i',j}$ is one of the indexed boundary
components of $G_{i'}$. Each boundary component of the disjoint sum $\left|G\right|=\sqcup_i \left|G_i\right|$ is required to occur in \emph{at most}
one edge of $\mathcal{G}$.
\medskip

By the following gluing process, $\mathcal{G}$ determines a \textsc{cat} \dfb{composite flanged gasket} denoted $\fg{\mathcal{G}}$. In the disjoint sum $\left|G\right|=\sqcup_i \left|G_i\right|$, make these identifications: for
each edge $\left(G_i,H_{i',j}\right)$ of $\mathcal{G}$, identify the flange $F_i$ of $G_i$ to a
small open collar of $H_{i',j}$ in $\left|G_{i'}\right|$ by an
orientation preserving \textsc{cat} isomorphism
$\theta_{i,i'}$. Here `small' should mean inside a prescribed open collar neighborhood of the boundary 
$\partial{(G_{i'} - F_{i'})}$ of the fine gasket of $G_{i'}$, so that the flanges identified into $G_{i'}$ obviously do not intersect. Since degree determines $\theta_{i,i'}$
up to isotopy, $\fg{\mathcal{G}}$ is determined up to \textsc{cat}
isomorphism that is the identity outside of an
arbitrarily small bicollar neighborhood in $\fg{\mathcal{G}}$
of the identified boundary components $\partial{F_i}=H_{i',j}$.

\begin{lemma}\label{cfg}
With the above definitions, the pair $(\fg{\mathcal{G}},F_0)$ is a flanged gasket.
\end{lemma}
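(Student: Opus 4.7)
The plan is to exhibit $(\fg{\mathcal{G}},F_0)$ directly as a linear flanged gasket in $\mathbb{H}^m$. By Lemma~\ref{l:gasketuniqueness} and Corollary~\ref{c:gasketboundaryextension}, each $G_i$ may be replaced, up to orientation-preserving \textsc{cat} isomorphism, by a linear flanged model in a copy of $\mathbb{H}^m$, and each collar identification $\theta_{i,i'}$ is unique up to ambient \textsc{cat} isotopy by standard collaring uniqueness. Hence the \textsc{cat} isomorphism type of $\fg{\mathcal{G}}$ depends only on the combinatorics of $\mathcal{G}$, and it suffices to produce one concrete linear realization of $\fg{\mathcal{G}}$ inside a single copy of $\mathbb{H}^m$.

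The elementary building block is a \emph{one-edge extension}: given a linear gasket $L \subset \mathbb{H}^m$ with a distinguished boundary hyperplane $H$ and an abstract flanged gasket $(G',F')$, I would place inside the far-side open half-space $U$ of $\mathbb{H}^m$ bounded by $H$ a countable family of pairwise disjoint hyperbolic hyperplanes, one for each boundary component of $G'$, so that $\bar L \cup \bar L'$ is again a linear gasket; Lemma~\ref{l:gasketuniqueness} applied inside $\bar U$ then supplies a \textsc{cat} isomorphism $G' - \Int F' \to \bar L'$ sending $\partial F'$ to $H$ and each remaining boundary component of $G'$ to one of the newly added hyperplanes. Starting from a linear realization of $(G_0,F_0)$ in $\mathbb{H}^m$ and iterating this one-edge extension along the edges of $\mathcal{G}$ in breadth-first order, I would obtain an ascending chain of linear gaskets $L_0 \subset L_1 \subset \cdots$ realizing the finite truncations $\fg{\mathcal{G}_n}$; the \textsc{cat} isomorphisms produced on each annexed piece fit together along shared hyperplanes by collaring uniqueness to yield a global \textsc{cat} isomorphism onto the total linear realization. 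For finite $\mathcal{G}$ this completes the proof.

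For general countable $\mathcal{G}$ I would take $L := \bigcup_n L_n$, with $F_0$ realized as the closed complementary half-space of $L$ corresponding to $G_0$'s flange. The main technical obstacle is to ensure that $L$ is \emph{properly embedded} in $\mathbb{H}^m$, i.e.\ that its countable boundary family of hyperplanes is locally finite. I would arrange this by nesting: fix once and for all a compact exhaustion $K_1 \subset K_2 \subset \cdots$ of $\mathbb{H}^m$, and at stage $n$, before performing the one-edge extension into the far-side half-space $U_n$, first apply an ambient \textsc{cat} automorphism of $U_n$ fixing $\partial U_n = H_n$ pointwise that compresses the prospective region for the new hyperplanes into a compact subset of $U_n \setminus K_n$. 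Such compressions exist because $U_n$ is \textsc{cat} isomorphic to $\mathbb{R}^m_+$ and admits obvious radial-type self-isotopies pushing material toward $\partial \overline{\mathbb{H}}^m$; Lemma~\ref{l:gasketuniqueness} then supplies the flexibility to place the new hyperplanes within the compressed target. With local finiteness thus secured, $L$ is a linear gasket in $\mathbb{H}^m$, and $(\fg{\mathcal{G}},F_0)$ is thereby realized as a linear flanged gasket, as required.
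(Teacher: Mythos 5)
Your proposal is correct and is essentially the paper's own argument: the paper proves Lemma~\ref{cfg} by reducing it to Lemma~\ref{l:TGisgasket}, whose proof is exactly your induction --- annex one gasket at a time into the far-side half-space, extend the prescribed boundary identification by Corollary~\ref{c:gasketboundaryextension}, and secure properness by placing the new half-spaces near the sphere at infinity (the paper's condition $(\ast)$, which is equivalent to your compression off a compact exhaustion). Only cosmetic repairs are needed: the boundary-respecting extension should cite Corollary~\ref{c:gasketboundaryextension} rather than Lemma~\ref{l:gasketuniqueness}, the new hyperplanes can only be made to \emph{avoid} $K_n$ (they are noncompact, so they cannot lie in a compact subset of $U_n \setminus K_n$), and ``breadth-first'' should be replaced by any enumeration in which each new edge abuts the already-built block, since a vertex of infinite valence defeats a literal breadth-first order.
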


The proof of Lemma~\ref{cfg} will come after we complete the definition
of a composite \textnormal{CSI} operation based on $\mathcal{G}$.\ss

For each $i\in I$, consider the set $J^{+}_{i} \subset J_i$ of those $j\in J_i$ (if any) such that, for no $k\in I$, does there exist an edge $\left(G_k,H_{i,j}\right)$. By the construction of $\fg{\mathcal{G}}$, its boundary $\partial{\fg{\mathcal{G}}}$ is a disjoint sum
\[
	\bigsqcup \left\{H_{i,j} \mid i\in I \textnormal{ and } j \in J^+_i \right\}.
\]

By definition, a \dfb{composite CSI operation}
according to the rooted tree $\mathcal{G}$ of flanged
gaskets as above involves an indexed set of flanged
$m$-manifolds to be `summed'
\[
	\left\{M_{i,j} \mid  i \in I \textnormal{ and } j\in J^{+}_i\right\}.
\]
The corresponding `sum' is the flanged manifold (flanged by $F_0$) 
obtained by gluing the flange of each such $M_{i,j}$
by a degree +1 isomorphism to a small open collar of
$H_{i,j}$ in $\fg{\mathcal{G}}$ (clearly this open collar may be chosen in $\left|G_i\right|$).\ss

\begin{pfoflemmacfg}
With the notations established
above, it suffices to prove that the flanged
manifold $\fg{\mathcal{G}}$ is a flanged gasket. This is
immediate from the following more primitive lemma (which will be reused in Section~\ref{s:multiplehyperplanes} in our proof of the MHLT (Theorem~\ref{docilethm})). \qed
\end{pfoflemmacfg}

\begin{lemma}\label{l:TGisgasket}
For \textsc{cat}=\textsc{diff} or \textsc{pl}, suppose that an oriented \textsc{cat}
$m$-manifold $X$ is a finite or countable union of \textsc{cat}
gaskets $G_i$, $i \in I$, any two of which are either
disjoint or intersect in a single boundary
component of each. Suppose also that the nerve of
the closed cover $\left\{G_i \mid i \in I\right\}$ of X is a
simplicial tree $\mathcal{T}$. Then $X$ is a \textsc{cat} gasket.
\end{lemma}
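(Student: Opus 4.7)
The plan is to realize $X$ as a linear gasket in $\mathbb{H}^m$ by an inductive construction over a tree traversal of $\mathcal{T}$, followed by a controlled limit. Enumerate the vertices of $\mathcal{T}$ as $G_0, G_1, G_2, \ldots$ starting from a chosen root $G_0$ so that for each $n$ the partial union $X_n := G_0 \cup \cdots \cup G_n$ is a connected subtree of $\mathcal{T}$ and $G_{n+1}$ meets $X_n$ in exactly one shared boundary component $H_{n+1}$ of some previously listed $G_{i(n+1)}$. The goal is to build an ascending chain $Y_0 \subset Y_1 \subset Y_2 \subset \cdots \subset \mathbb{H}^m$ of linear gaskets together with compatible orientation-preserving \textsc{cat} isomorphisms $\iota_n : X_n \to Y_n$ with $\iota_{n+1}|_{X_n} = \iota_n$, so that taking the colimit yields a \textsc{cat} isomorphism $\iota : X \to Y := \bigcup_n Y_n$ onto a linear gasket.

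The inductive step proceeds via Corollary~\ref{c:gasketboundaryextension}. Suppose $\iota_n$ has been constructed, and set $\tilde H := \iota_n(H_{n+1}) \subset \partial Y_n$. Let $V$ be the closed half-space of $\mathbb{H}^m$ bounded by $\tilde H$ on the side opposite from $Y_n$. I would select a countable family of pairwise disjoint hyperbolic hyperplanes in $V$, indexed by the boundary components of $G_{n+1}$ (with the distinguished one being $\tilde H$ itself), bounding a new linear gasket $Y_{n+1}^{\mathrm{new}} \subset V$ having the required number of boundary hyperplanes. Extend $\iota_n|_{H_{n+1}}$ to an orientation-compatible \textsc{cat} bijection $\partial G_{n+1} \to \partial Y_{n+1}^{\mathrm{new}}$; Corollary~\ref{c:gasketboundaryextension} then provides a \textsc{cat} isomorphism $G_{n+1} \to Y_{n+1}^{\mathrm{new}}$, which together with $\iota_n$ defines $\iota_{n+1} : X_{n+1} \to Y_{n+1} := Y_n \cup Y_{n+1}^{\mathrm{new}}$. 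Since $\tilde H$ now lies in the interior of $Y_{n+1}$, and the other new boundary hyperplanes are pairwise disjoint and disjoint from those of $Y_n$ by construction, $Y_{n+1}$ is again a linear gasket.

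The delicate part is the passage to the limit. For $Y := \bigcup_n Y_n$ to be a linear gasket in $\mathbb{H}^m$, the accumulated family of leftover boundary hyperplanes must be locally finite; otherwise $Y$ could fail to be closed in $\mathbb{H}^m$, or its frontier could fail to consist of pairwise disjoint hyperplanes. I would enforce local finiteness by fixing a compact exhaustion $K_0 \subset K_1 \subset \cdots$ of $\mathbb{H}^m$ and, at step $n+1$, choosing each new hyperplane (other than $\tilde H$) to miss $K_n$; when infinitely many new hyperplanes appear at a single step, they are ordered and pushed successively into deeper shells $\mathbb{H}^m \setminus K_{n+j}$. The freedom to make such placements comes from Lemma~\ref{l:gasketuniqueness} together with the non-compactness of $V$. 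This limit step is the main obstacle: the finite-tree case follows directly from Corollary~\ref{c:gasketboundaryextension} by iteration, but in the infinite case uncontrolled placements of successive $Y_{n+1}^{\mathrm{new}}$ would let the boundary hyperplanes accumulate inside $\mathbb{H}^m$, destroying the gasket structure of the colimit $Y$.
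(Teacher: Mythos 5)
Your proposal is correct and takes essentially the same route as the paper: root the tree, enumerate the gaskets so each partial union is connected, extend inductively by Corollary~\ref{c:gasketboundaryextension}, and impose a placement condition on the newly created complementary halfspaces so that the limiting map is proper and the colimit is a linear gasket. The only difference is in the form of the control: the paper's condition $(\ast)$ confines the new halfspaces to the euclidean $1/(i+1)$-neighborhood of the sphere $S^{m-1}$, which makes the new gasket pieces themselves eventually avoid every compact set, whereas your compact-exhaustion condition gives local finiteness of the accumulated hyperplane family, from which properness of the limit follows by a short additional connectivity argument (a compact connected set meeting a new piece must meet its attaching hyperplane).
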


\begin{pfoflemmaTG}
Without loss of generality, we assume $I$ is $\mathbb{N}$ 
or a finite initial segment of $\mathbb{N}$.
Reindexing the $G_i$, we can arrange that,
for all $i \geq 0$, the gasket $G_{i+1}$ is adjacent
in $X$ to the connected block 
\[
	X_i := G_0 \cup G_1\cup \cdots \cup G_i.
\]
By definition, $G_0 = X_0$ can be degree +1
embedded in $\mathbb{H}^{m}$ with frontier made up of
hyperbolic hyperplanes.\ss

Suppose inductively that $\phi_i : X_i \to \mathbb{H}^m$ is
such an embedding for some $i\geq0$. Write $X'_i$ for
$\phi_i(X_i)$, write $H_i$ for the boundary component of
$X_i$ that is shared with $G_{i+1}$, and write $H'_i$ for the hyperbolic hyperplane $\phi_i(H_i)$. We will extend
this embedding $\phi_i$ to one of $X_{i+1} = X_i \cup
G_{i+1}$.\ss

Let $Y^{+}_i$ be the closed halfspace in $\mathbb{H}^{m}$ bounded by
$H'_i$ that does not intersect $\Int{X'_i}$. In $\Int{Y^+_i}$ choose as many disjoint halfspaces (each bounded by a hyperbolic hyperplane)
as $G_{i+1}$ has boundary components disjoint from $X_i$;
then delete the interiors of those halfspaces
from $Y^+_i$. With the intent to assure that the
ultimate embedding of $X$ will be proper, we can and
do
\smallskip
\begin{itemize}
\item[$(\ast)$] choose these halfspaces within the $1/(i+1)$ neighborhood of the frontier 
sphere $S^{m-1}$ of $\mathbb{H}^{m}$
in $\mathbb{R}^m$ (for the euclidean distance of $\mathbb{R}^m$).
\end{itemize}
\smallskip
The result is a linear gasket $G'_{i+1}$ in $\mathbb{H}^m$
adjacent to $X'_i$, more precisely $X'_i \cap
G'_{i+1} = H'_i$. By Corollary~\ref{c:gasketboundaryextension} concerning \textsc{cat} uniqueness of linear gaskets, there is a \textsc{cat}
isomorphism $G_{i+1}\to G'_{i+1}$ agreeing with
$\phi_i$ on $H_i$ and thus extending $\phi_{i}|_{H_i}$ to a
\textsc{cat} embedding $\psi_{i+1}$ of $G_{i+1}$ onto a linear
gasket in $\mathbb{H}^m$. Then $\phi_i$ and $\psi_{i+1}$
together define an injective \textsc{cat} map $X_{i+1}\to\mathbb{H}^m$ that is clearly proper. For \textsc{cat}=\textsc{pl} this injective map induces a \textsc{pl} isomorphism with its image. For
\textsc{cat}=\textsc{diff} this is likewise true after
modification of $\psi_{i+1}$ on a small collar
neighborhood of $H_i$ in $G_{i+1}$ (see~\cite{milnor65}).\ss

This completes the induction defining $\phi_i$ for
$i\in I$. The inductively imposed condition $(\ast)$
assures that:
\smallskip
\begin{itemize}
\item[$(\ast\ast)$] For each $i>0$, the frontier $\partial{G'_i}$ lies
in the $1/i$ neighborhood of $S^{m-1}$.
\end{itemize}
\smallskip
Hence $G'_i$ either contains the ball about the
origin of euclidean radius
 $1-(1/i)$ or else it lies outside that
ball.  Since the sets $\Int{G'_i}$ are disjoint, 
it follows that, for all large $i$, $G'_i$ lies 
outside the ball of radius $1-(1/i)$.
Since $\mathbb{H}^{m}$ is the open ball of radius 1 in $\mathbb{R}^m$,
we conclude that:
\smallskip
\begin{itemize}
\item[$(\ast{\ast}\ast)$] The sets $G'_i$ converge toward Alexandroff's infinity in $\mathbb{H}^m$.
\end{itemize}
\smallskip
Together, the $\phi_i$ clearly define an injective
\textsc{cat} map $\phi:X \to\mathbb{H}^m$. The condition $(\ast{\ast}\ast)$ proves
that $\phi$ is proper and thus a $\textsc{cat}$ embedding onto a 
linear gasket $X'$. \qed
\end{pfoflemmaTG}

\begin{remark}
In the proof of Lemma~\ref{l:TGisgasket}, if the conditions
$(\ast)$ and $(\ast\ast)$ are not imposed and the tree $\mathcal{T}$
contains an infinitely long embedded path,
then the map $\phi: X \to\mathbb{H}^m$
may not be proper. But the closure of $\phi(X)$
seems always to be a linear gasket.
\end{remark}

\nss{Proof of Property (2): Associativity of CSI Operations}

Here we state explicitly, and prove, the
associativity properties of \CSI/ as promised in Property (2) of 
Theorem~\ref{p:csiproperties}. We then deduce two basic corollaries.\ss

By Lemma~\ref{cfg} and the above definition of composite \CSI/
operation we immediately get:

\begin{theorem}[First Associativity Theorem]\label{fat}
\hfill\break
Any fixed composite \CSI/ operation on a  
finite or countably infinite set of disjoint flanged 
\textsc{cat} $m$-manifold summands, $m\geq 3$, 
is isomorphic to a (normal) \CSI/ sum of the same
flanged manifolds. Thus the flanged manifold
resulting from this composite \CSI/ operation depends
(up to \textsc{cat} isomorphism of flanged
manifolds) only on the disjoint sum of the
flanged manifold summands. \qed
\end{theorem}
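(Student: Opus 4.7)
My plan is to reduce the composite CSI operation to a normal (one-step) CSI sum by leveraging Lemma~\ref{cfg}. First, Lemma~\ref{cfg} already certifies that the composite flanged gasket $(\fg{\mathcal{G}},F_0)$ obtained from the tree $\mathcal{G}$ is itself a flanged gasket in the sense of Section~\ref{s:csi}. By the construction of $\fg{\mathcal{G}}$, its boundary decomposes as the disjoint union of those boundary pieces $H_{i,j}$ which were \emph{not} consumed by an edge of $\mathcal{G}$; that is, $(i,j)$ ranges over the index set
\[
S := \{(i,j) : i\in I,\ j\in J^+_i\},
\]
and each such $H_{i,j}$ receives exactly one of the summands $M_{i,j}$ via a degree $+1$ collar identification.

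Next, I would compare this to the recipe for a normal CSI sum as given in Section~\ref{s:csi}: one starts from a flanged gasket with $|S|+1$ boundary components (one of which carries the flange of the sum), and then glues the flange of each summand onto a small open collar of one of the remaining boundary components via a degree $+1$ \textsc{cat} embedding. The composite operation, viewed globally, supplies data of precisely this shape, with the flanged gasket being $(\fg{\mathcal{G}},F_0)$ and with the indexed set of summands being $\{M_{i,j}\}_{(i,j)\in S}$. Hence the flanged manifold produced by the composite CSI operation is, by construction, a normal CSI sum of these same $M_{i,j}$.

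Finally, I would invoke Property~(1) of Theorem~\ref{p:csiproperties}, which asserts well-definedness (and commutativity) of the normal CSI sum: the resulting flanged manifold depends only on the indexed set of flanged summands, or equivalently on their disjoint sum, up to \textsc{cat} isomorphism of flanged manifolds. This gives both conclusions of the theorem at once: isomorphism of the composite outcome with a normal CSI sum, and the claimed dependence only on the disjoint sum of summands.

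I expect no genuine obstacle; the entire argument is a matter of recognizing the composite construction as a disguised one-step construction, and Lemma~\ref{cfg} does the real work of supplying the necessary flanged gasket. The only mild verification is that the smallness conditions on the internal collars $\theta_{i,i'}$ used inside $\fg{\mathcal{G}}$ are compatible with the open-collar choices required by the normal CSI construction; this is automatic since both definitions allow such collars to be chosen arbitrarily small, and Corollary~\ref{c:gasketboundaryextension} together with the flanged-gasket isomorphism furnished by Lemma~\ref{cfg} absorbs any remaining flexibility.
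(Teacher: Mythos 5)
Your proposal is correct and matches the paper's own (very terse) argument: the paper likewise derives the First Associativity Theorem immediately from Lemma~\ref{cfg} and the definition of the composite \CSI/ operation, recognizing the composite construction as a normal \CSI/ sum over the flanged gasket $(\fg{\mathcal{G}},F_0)$ and then appealing to the well-definition in Property~(1) of Theorem~\ref{p:csiproperties}. Your write-up simply spells out the details the paper leaves implicit.
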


This quickly implies the

\begin{theorem}[Second Associativity Theorem]\label{sat}
\hfill\break
Consider a nonempty sequence $M_i$ 
(finite or infinite) 
of disjoint flanged $m$-manifolds, $m\geq 3$, where
each $M_i$ is itself a \CSI/ sum of a
sequence (finite or infinite) of disjoint flanged
manifolds $M_{i,j}$. Then, writing $\mathcal{M}$ 
for the set $\left\{M_i\right\}$ and $\mathcal{M}'$ for the set
$\left\{M_{i,j}\right\}$, there is a \textsc{cat}
isomorphism of flanged \CSI/ sums:
\[
   \CSI/(\mathcal{M}) \cong \CSI/(\mathcal{M}').
\]
\end{theorem}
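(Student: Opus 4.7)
The plan is to reduce the claim to the First Associativity Theorem (Theorem~\ref{fat}) by assembling a single rooted tree $\mathcal{G}$ of flanged gaskets whose composite CSI operation is identifiable with both $\CSI/(\mathcal{M})$ and $\CSI/(\mathcal{M}')$. Write the outer sum $\CSI/(\mathcal{M}) = (W,Q)$ with coarse gasket $G_0$, flange $F_0 = Q$, and boundary components $H_{0,i}$ of $G_0$ indexed by the summands $M_i$. For each $i$, write the given inner presentation $M_i = \CSI/(\{M_{i,j}\}_{j \in J_i})$ with coarse gasket $G_i$, flange $F_i$, and boundary components $H_{i,j}$ indexed by the $M_{i,j}$. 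The crucial observation is that, by the definition of CSI, $F_i$ \emph{is} the flange of the flanged manifold $M_i$, and it is this very flange that is identified to an open collar of $H_{0,i}$ in $G_0$ when forming $\CSI/(\mathcal{M})$.

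Now take $\mathcal{G}$ to have vertex set $\{G_0\} \sqcup \{G_i\}_{i}$, root $G_0$, and edges $(G_i, H_{0,i})$ for each $i$. Each boundary component $H_{0,i}$ of $G_0$ is consumed by an edge, while every boundary component $H_{i,j}$ of a non-root vertex $G_i$ is free; hence the set of free boundary components of $\mathcal{G}$ is in canonical bijection with the index set of $\mathcal{M}'$. By Lemma~\ref{cfg}, the composite flanged gasket $\fg{\mathcal{G}}$, formed by the small-collar gluings of each $F_i$ into $G_0$, is again a flanged gasket with flange $F_0$.

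Consider the composite CSI operation directed by $\mathcal{G}$ applied to $\mathcal{M}'$: the flange of each $M_{i,j}$ is identified to a small collar of $H_{i,j}$ in $\fg{\mathcal{G}}$, chosen inside the subregion $G_i$. Read locally inside the sub-assembly over each non-root vertex $G_i$ with its attached leaves $M_{i,j}$, this sub-assembly is by definition a CSI sum of the family $\{M_{i,j}\}_{j}$ using coarse gasket $G_i$, hence by Property~(1) of Theorem~\ref{p:csiproperties} is \textsc{cat} isomorphic to $M_i$ via an isomorphism of flanged manifolds that is the identity on each $M_{i,j} - \Int P_{i,j}$. Splicing these per-vertex identifications (whose non-trivial loci are pairwise disjointly supported inside the disjoint sub-gaskets $G_i$) converts the composite output into $\CSI/(\mathcal{M})$ itself, while Theorem~\ref{fat} independently identifies the same composite output with the normal CSI sum $\CSI/(\mathcal{M}')$. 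Composing the two identifications yields the desired \textsc{cat} isomorphism.

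The main, and essentially only, technical point is the identification in the third paragraph of each sub-assembly with its corresponding $M_i$, which rests squarely on the well-definedness and rigidity statements of Property~(1): since the isomorphism produced there can be arranged to be the identity on the complement of the flanges and degree $+1$ on the fine gasket, the (possibly infinite) collection of per-vertex isomorphisms have pairwise disjoint ``non-trivial'' loci and assemble tautologically into a single global \textsc{cat} isomorphism of flanged manifolds, without any genuine infinite composition being required.
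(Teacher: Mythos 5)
Your proposal is correct and takes essentially the same route as the paper: the paper's proof likewise opens up each $M_i$ into its given presentation as $\CSI/(\{M_{i,j}\}_j)$, recognizes the defining construction of $\CSI/(\mathcal{M})$ as the composite \CSI/ sum over the natural two-level rooted tree of flanged gaskets, and then invokes the First Associativity Theorem (Theorem~\ref{fat}) to identify it with $\CSI/(\mathcal{M}')$. The only point to tighten is your final ``splicing'' step: since the per-vertex isomorphisms need not restrict to the identity on the flanges glued into the root gasket, the identification of the composite output with $\CSI/(\mathcal{M})$ is most safely obtained by one more appeal to Property~(1) at the outer level (viewing the composite output as a normal \CSI/ sum of the sub-assemblies isomorphic to the $M_i$), a point the paper itself passes over just as briefly.
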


\begin{proof}
Examine the defining construction for 
$\CSI/(\mathcal{M})$, which uses a flanged gasket with $\left|\mathcal{M}\right|$
boundary components. In it, replace each summand $M_i$
by a copy of  $\CSI/(\mathcal{M}_i)$  where $\mathcal{M}_i :=\sqcup_j \left\{M_{i,j}\right\}$. This reveals
that  $\CSI/(\mathcal{M})$ is isomorphic to a 
composite \CSI/ sum with  summands
$\mathcal{M}'$. Hence, the First Associativity Theorem tells us that
$\CSI/(\mathcal{M}) \cong \CSI/(\mathcal{M}')$.
\end{proof}

\begin{corollary}
Let $\alpha$, $\beta$, and $\gamma$ be flanged $m$-manifolds, $m\geq3$. Then one has a \textsc{cat} isomorphism of flanged manifolds $(\alpha\beta)\gamma\cong\alpha(\beta\gamma)$.
\end{corollary}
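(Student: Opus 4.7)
The plan is simply to apply the Second Associativity Theorem (Theorem~\ref{sat}) twice and invoke the well definition (Property~(1) of Theorem~\ref{p:csiproperties}), which includes commutativity of \CSI/. All the real work has already been done.

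First I would unpack the parenthesized expressions. Write $(\alpha\beta)\gamma$ as $\CSI/(\mathcal M)$ where $\mathcal M = \{M_1, M_2\}$ with $M_1 := \CSI/(\alpha,\beta)$ and $M_2 := \gamma$. View $M_2$ as the trivial \CSI/ sum of the singleton family $\{\gamma\}$, so that $M_i = \CSI/(\mathcal M_i)$ for $i = 1,2$ with $\mathcal M_1 = \{\alpha,\beta\}$ and $\mathcal M_2 = \{\gamma\}$. Let $\mathcal M' = \mathcal M_1 \sqcup \mathcal M_2 = \{\alpha,\beta,\gamma\}$. Then Theorem~\ref{sat} gives
\[
   (\alpha\beta)\gamma \;=\; \CSI/(\mathcal M) \;\cong\; \CSI/(\mathcal M') \;=\; \CSI/(\alpha,\beta,\gamma).
\]
Symmetrically, applying the same reasoning with $M_1 := \alpha$ and $M_2 := \CSI/(\beta,\gamma)$ yields
\[
   \alpha(\beta\gamma) \;\cong\; \CSI/(\alpha,\beta,\gamma).
\]
Chaining these two isomorphisms produces the desired \textsc{cat} isomorphism of flanged manifolds $(\alpha\beta)\gamma \cong \alpha(\beta\gamma)$.

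I do not anticipate any obstacle: the only subtlety is making sure that the \CSI/ sum on the right-hand side is interpreted as the well-defined (up to \textsc{cat} isomorphism of flanged manifolds) three-fold \CSI/ of $\{\alpha,\beta,\gamma\}$, which is guaranteed by Property~(1) of Theorem~\ref{p:csiproperties}. In particular, any ambiguity in how the three summands are ordered or indexed is absorbed by the commutativity statement in Property~(1), so the two applications of Theorem~\ref{sat} land in the same isomorphism class. For \textsc{cat} = \textsc{top}, the refined definition of \CSI/ invoked at the end of Section~\ref{s:csi} reduces this case to \textsc{diff} (or \textsc{pl}) on the gaskets and flanges, so no further argument is needed.
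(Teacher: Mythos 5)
Your proposal is correct and takes essentially the same route as the paper, which also obtains $(\alpha\beta)\gamma \cong \alpha\beta\gamma \cong \alpha(\beta\gamma)$ by two applications of the Second Associativity Theorem (Theorem~\ref{sat}). The only cosmetic difference is that you make explicit the reading of $\gamma$ as the \CSI/ sum of the singleton family $\{\gamma\}$, which the paper leaves implicit.
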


This is the usual formulation of associativity for any binary operation. The parentheses in this example and the next serve to indicate order of \CSI/ summation. The expression $(\alpha\beta)$ indicates the flanged manifold for which we have mentioned the alternative notations $\CSI/(\alpha,\beta)$ and $(\alpha \csinf \beta)$.

\begin{noqedproof}
Two applications of the Second Associativity Theorem above
give the two isomorphisms:
\[
	(\alpha \beta) \gamma \cong \alpha \beta \gamma \cong \alpha (\beta \gamma).
	\eqno \qed
\]
\end{noqedproof}

\smallskip

The next corollary will be used in proving the HLT (Theorem~\ref{t:ourunknottingtheorem}).

\begin{corollary}
Let the symbols $a$, $b$, $c$, $\ldots$ of an infinite alphabet stand for \textsc{cat} flanged $m$-manifolds, $m\geq3$. Then one has a \textsc{cat} isomorphism of infinite \CSI/ sums of flanged manifolds:
\begin{equation}\label{assocsums}
	(ab)(cd)(ef)(gh)\cdots \cong a(bc)(de)(fg)\cdots . \tag{\dag}
\end{equation}
\end{corollary}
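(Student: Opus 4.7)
My plan is to view both sides of~\eqref{assocsums} as composite \CSI/ operations on the \emph{same} underlying countable indexed collection $\mathcal{M}' := \{a, b, c, d, e, f, g, h, \ldots\}$ of flanged $m$-manifolds, and then quote the Second Associativity Theorem (Theorem~\ref{sat}) together with the well-definition and commutativity of \CSI/ from Property~(1) of Theorem~\ref{p:csiproperties}.

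For the left hand side, set $M_1 := \CSI/(a,b)$, $M_2 := \CSI/(c,d)$, $M_3 := \CSI/(e,f)$, and so on, with the natural pairings, and let $\mathcal{M} := \{M_1, M_2, M_3, \ldots\}$. The definition of the left hand side is then precisely $\CSI/(\mathcal{M})$, where each $M_i = \CSI/(\mathcal{M}_i)$ with $\mathcal{M}_1 = \{a,b\}$, $\mathcal{M}_2 = \{c,d\}$, etc. By Theorem~\ref{sat}, we therefore have a \textsc{cat} isomorphism of flanged manifolds
\[
   (ab)(cd)(ef)(gh)\cdots \;=\; \CSI/(\mathcal{M}) \;\cong\; \CSI/(\mathcal{M}'),
\]
since the disjoint union $\sqcup_i \mathcal{M}_i$ is precisely (a reindexing of) $\mathcal{M}'$.

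For the right hand side, perform the analogous grouping: set $N_1 := a$ (viewed as a trivial \CSI/ of one summand, or handled separately as the distinguished first summand), $N_2 := \CSI/(b,c)$, $N_3 := \CSI/(d,e)$, $N_4 := \CSI/(f,g)$, and so on. Then the right hand side is, by its parenthesization, the \CSI/ sum $\CSI/(\mathcal{N})$ where $\mathcal{N} := \{N_1, N_2, N_3, \ldots\}$. Again Theorem~\ref{sat} gives $\CSI/(\mathcal{N}) \cong \CSI/(\mathcal{M}')$, the point once more being that $\sqcup_k \mathcal{N}_k = \{a\} \sqcup \{b,c\} \sqcup \{d,e\} \sqcup \cdots$ is the same indexed collection $\mathcal{M}'$ up to reindexing. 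Combining with the first isomorphism gives~\eqref{assocsums}.

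The only thing to watch is that \CSI/ is well defined on a \emph{countable indexed set} of summands independent of the ordering or indexing scheme, so that the two reindexings of $\mathcal{M}'$ coming from the two sides yield isomorphic flanged manifolds. This is exactly the content of Property~(1) of Theorem~\ref{p:csiproperties} (well-definition and commutativity). Thus the proof is a bookkeeping exercise; there is no genuine geometric obstacle beyond the associativity and commutativity already established. The mildest subtlety is handling the lone summand $a$ on the right hand side, which one may either treat as the trivial composite \CSI/ of a one-element collection or absorb into the tree-of-flanged-gaskets formalism of Lemma~\ref{cfg} by assigning it to the root vertex; either convention makes the right hand side a genuine composite \CSI/ operation to which Theorem~\ref{sat} applies.
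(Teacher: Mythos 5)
Your proposal is correct and follows essentially the same route as the paper: both sides are flattened to the single infinite sum $\CSI/(\mathcal{M}') = abcdefgh\cdots$ by two applications of the Second Associativity Theorem (Theorem~\ref{sat}), the paper simply writing ``Similarly'' where you spell out the grouping $N_1=a$, $N_2=bc$, $N_3=de,\ldots$ on the right. Your extra remarks on reindexing via Property~(1) and on the lone summand $a$ are harmless elaborations of what the paper leaves implicit.
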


\begin{noqedproof}
Applying the Second Associativity Theorem to the left hand side of \eqref{assocsums}, one gets the isomorphism of \CSI/ sums:
\[
	(ab)(cd)(ef)(gh)\cdots \cong abcdefgh\cdots.
\]
Similarly,
\[
	a(bc)(de)(fg)\cdots \cong abcdefgh\cdots.
	\eqno \qed
\]
\end{noqedproof}

\nss{Proof of Property~(3): Identity Element}

The easy verifications that the \textnormal{CSI} identity is $\varepsilon = (\mathbb{R}^{m},\mathbb{R}^{m}_{+})$ and $\varepsilon \cong \varepsilon \varepsilon \varepsilon \cdots$ are left to the reader. \qed \ss

\medskip

The proof of the three basic properties of \textnormal{CSI} (Theorem~\ref{p:csiproperties}) is complete. \qed \ss

\section{\textnormal{CSI} Proves the Cantrell-Stallings Hyperplane Unknotting Theorem}\label{s:csicantrellstallings}

The machinery developed thus far suffices to prove the following important hyperplane unknotting theorem~\cite{cantrell,stallings65}. Given a manifold $M$ \textsc{cat} isomorphic to some $\mathbb{R}^{k}$, we say a \textsc{cat} ray $r$ embedded in $M$ is \dfb{unknotted} in $M$ if there is a \textsc{cat} isomorphism $f: \mathbb{R}^{k} \to M$ such that $f^{-1}(r)$ is linear in $\mathbb{R}^{k}$.\ss

\begin{theorem}[Hyperplane Linearization Theorem (=~HLT)]\label{t:ourunknottingtheorem}
\hfill\break
Consider a codimension~$1$ and \textsc{cat} proper submanifold $N$ of $\mathbb{R}^{m}$,\break $m \geq 2$, that is \textsc{cat} isomorphic to $\mathbb{R}^{m-1}$. Assume that there is a ray $r$ in $N$ that is unknotted both in $N$ and in $\mathbb{R}^{m}$. Then, $N$  is itself unknotted in the sense that $g(N)$ is linear for some \textsc{cat} automorphism $g$ of $\mathbb{R}^{m}$.
\end{theorem}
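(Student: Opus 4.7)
The plan is to run an Eilenberg--Mazur-style swindle in the \CSI/ framework. After applying the assumed \textsc{cat} automorphism of $\mathbb{R}^m$ that unknots $r$, I may take $r$ to be a standard linear ray. Since $N$ is a \textsc{cat} proper locally flat codimension-$1$ submanifold, it is bicollared in $\mathbb{R}^m$, and therefore separates $\mathbb{R}^m$ into two closed half-manifolds $A$, $B$ with $A \cap B = N$. Set $U = \Int A$ and $V = \Int B$, and use the bicollar of $N$ to push $r \subset N$ slightly into each side, producing rays $r_U$, $r_V$ whose \textsc{cat} regular neighborhoods (Section~\ref{s:regnhbds}) serve as flanges $P_U \subset U$, $P_V \subset V$. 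The hypothesis that $r$ is unknotted in $N$ ensures that these flanges can be chosen compatibly with the standard flange of $\mathbb{R}^m$ determined by $r$. Throughout I take $m \geq 3$ so that \CSI/ is defined; as signalled in the introduction, the case $m = 2$ is handled by the Morse-theoretic methods of Section~\ref{s:multiplehyperplanes}.

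The geometric core of the argument is the claim
\[
    (U, P_U) \;\csinf\; (V, P_V) \;\cong\; (\mathbb{R}^m,\, \mathbb{R}^m_+),
\]
i.e., the \CSI/ sum of the two open sides recovers $\mathbb{R}^m$ with its standard flange. To establish this I would take a closed bicollar $T$ of $N$ in $\mathbb{R}^m$ containing $P_U \cup P_V$ and analyze $T - \Int(P_U \cup P_V)$: Propositions~\ref{propA} and~\ref{propB}, together with the gasket uniqueness of Lemma~\ref{l:gasketuniqueness}, identify this piece, augmented by a distal flange attached along the outward end of $r$, as a \textsc{cat} flanged gasket with three boundary components matching $\partial P_U$, $\partial P_V$, and the new flange. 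The complement in $\mathbb{R}^m$ of the fine part of this gasket is precisely $(U - \Int P_U) \sqcup (V - \Int P_V)$, glued in by exactly the collar embeddings $\theta_U$, $\theta_V$ used in the \CSI/ construction of Section~\ref{s:csi}. This pins down a \textsc{cat} isomorphism of flanged manifolds between $(\mathbb{R}^m, \mathbb{R}^m_+)$ and the \CSI/ sum.

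With the key identification in hand, Theorem~\ref{p:csiproperties} closes the argument. Writing $\varepsilon = (\mathbb{R}^m, \mathbb{R}^m_+)$, commutativity and associativity of \CSI/ together with the identity property $\varepsilon \csinf \varepsilon \csinf \cdots \cong \varepsilon$ yield
\[
   U \;\cong\; U \csinf \varepsilon \csinf \varepsilon \csinf \cdots
     \;\cong\; U \csinf (V \csinf U) \csinf (V \csinf U) \csinf \cdots
     \;\cong\; (U \csinf V) \csinf (U \csinf V) \csinf \cdots
     \;\cong\; \varepsilon,
\]
and symmetrically $V \cong \varepsilon$ as flanged manifolds. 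Combining the flanged isomorphism $U \cong \mathbb{R}^m$ with the \textsc{cat} collar uniqueness of $N$ inside $A$ promotes it to a \textsc{cat} isomorphism of pairs $(A, N) \cong (\mathbb{R}^m_+, \mathbb{R}^{m-1})$, and likewise for $B$; matching these along the shared boundary $N$ via bicollar uniqueness produces the desired \textsc{cat} automorphism $g$ of $\mathbb{R}^m$ with $g(N)$ linear.

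The main obstacle is the geometric identification of paragraph~two: packaging the bicollar of $N$ with the two flanges into an honest flanged gasket. For \textsc{cat} $=$ \textsc{top} this is further complicated by the refined \CSI/ definition, which demands preferred \textsc{diff} structures on all flanges and gaskets; the bicollar of $N$ must therefore be chosen with a compatible \textsc{diff} structure near $r$, and this is exactly where the hypothesis that $r$ is unknotted in $\mathbb{R}^m$ earns its keep by furnishing a standard model near $r$ in which the required \textsc{diff} data can be produced. The swindle itself and the final collar-matching step are then routine given the machinery of Sections~\ref{s:csi}--\ref{s:proofbasiccsiprops}.
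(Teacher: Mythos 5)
Your overall strategy (exhibit $\mathbb{R}^m$ as a \CSI/ sum of the two sides, show that sum is $\varepsilon=(\mathbb{R}^m,\mathbb{R}^m_+)$ using the two ray hypotheses, then run the swindle) is the paper's, but your choice of flanges --- regular neighborhoods $P_U,P_V$ of rays pushed into the two \emph{open} sides $U,V$ --- breaks the argument at two genuine points. First, the embedded identification in your second paragraph does not exhibit $(U,P_U)\csinf(V,P_V)$. With $P_U\cup P_V$ inside a closed bicollar $T$ of $N$, the region $T-\Int(P_U\cup P_V)$ (with or without your extra ``distal flange'') has the two bicollar boundary components \emph{in addition to} $\partial P_U$ and $\partial P_V$, so it is not a three-boundary-component fine gasket; and the closed complementary pieces of that region in $\mathbb{R}^m$ are copies of $A$, $B$, $P_U$, $P_V$ (and the distal flange), not $(U-\Int P_U)\sqcup(V-\Int P_V)$. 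In particular $P_U$ is never identified with an open collar of a boundary hyperplane of the fine gasket, which is exactly what the \CSI/ recipe of Section~\ref{s:csi} requires of $\theta_U$. Nor can you silently replace $P_U$ by a collar-type flange: flange uniqueness in $U$ up to ambient isotopy is not available before the theorem is proved (it is essentially what is at stake). Second, even granting your key claim, the swindle only yields a flanged isomorphism $(U,P_U)\cong\varepsilon$, i.e.\ $\Int A\cong\mathbb{R}^m$ and $U-\Int P_U\cong\mathbb{R}^m_+$. The final step ``promote via collar uniqueness to $(A,N)\cong(\mathbb{R}^m_+,\mathbb{R}^{m-1})$'' is a gap: an abstract isomorphism of the interior does not determine the manifold-with-boundary, and the implication $\Int A\cong\mathbb{R}^m$, $\partial A\cong\mathbb{R}^{m-1}$ $\Rightarrow$ $A\cong\mathbb{R}^m_+$ is precisely the one-boundary-component case of the Gasket Recognition Theorem (Corollary~\ref{grt}), which the paper obtains only downstream of the HLT/MHLT --- so as written this step is circular.

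Both problems are cured by the paper's flange choice, which is the real trick: flange the \emph{closed} side $A$ by an open collar $P$ of $N$ taken in the \emph{other} side, $\alpha=(A\cup P,P)$, so that $|\alpha|-\Int P$ is literally $A$; then the coarse gasket is a bicollar $G$ of $N$, the sum's flange is a regular neighborhood $T(r)$ of $r$ itself (straddling $N$), and $G^\ast=G-\Int T(r)$ has exactly three boundary components --- here unknottedness of $r$ in $N$ makes $G^\ast$ a gasket, while unknottedness of $r$ in $\mathbb{R}^m$ gives $(\mathbb{R}^m,T(r))\cong\varepsilon$, whence $\alpha\alpha'\cong\varepsilon$ and the swindle hands you $A\cong\mathbb{R}^m-\Int\mathbb{R}^m_+\cong\mathbb{R}^m_+$ directly, with no interior-to-boundary promotion needed. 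Finally, your treatment of \textsc{top} misses the actual extra ingredient: since there is no closed \textsc{top} regular neighborhood theory, the identification of the closed complement of $\Int T(r)$ with $\mathbb{R}^m_+$ is proved via open mapping cylinder neighborhood uniqueness (Theorem~\ref{t:omcnu}), the unknottedness of $r$ in $N$ (not in $\mathbb{R}^m$) being what supplies the \textsc{diff} gasket structure on the bicollar required by the refined \textsc{top} definition of \CSI/.
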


\begin{remark}\label{rhs}
The ray unknotting hypothesis facilitates our \textnormal{CSI} based proof for $m\geq3$. The next section shows it is superfluous if $m > 3$.
\end{remark}

\begin{remark}\label{hs2d}
Dimension $2$ is special in that, not only is the ray unknotting hypothesis unnecessary,  but in the case of \textsc{top} the abiding assumption of local local flatness is redundant by the classical Schoenflies theorem (see~\cite{moise},~\cite{siebenmann05}).
\end{remark}

\begin{pfofourunknottingtheoremmeq2}
This is known by classical methods that are explained in~\cite{siebenmann05}. Some details follow.\ss

Case \textsc{cat}=\textsc{top}. One-point (Alexandroff) compactify the pair\break $\left(\mathbb{R}^{2},N\right)$ to produce a pair $(S^{2},\widehat{N})$. The (difficult) classical Schoenflies theorem tells us $(S^{2},\widehat{N})$ is homeomorphic to the standard pair $\left(S^{2},S^{1}\right)$. From this it follows, on deleting the added point $\infty$, that the pair
$\left(\mathbb{R}^{2},N\right)$ is homeomorphic to $\left(\mathbb{R}^{2},\mathbb{R}^{1}\times0\right)$.\ss

Case \textsc{cat}=\textsc{pl}. Proceed similarly but use the ``Almost \textsc{pl} Schoenflies Theorem'' (=~APLST) of Sections~5 and~7 of~\cite{siebenmann05} (see also Remark~\ref{2dwealth} below) to conclude that $(S^{2},\widehat{N})$ is homeomorphic to $\left(S^{2},S^{1}\right)$ by a homeomorphism that is \textsc{pl} except at $\infty$. On deleting $\infty$ we get the desired \textsc{pl} isomorphism between $\left(\mathbb{R}^{2},N\right)$ and $\left(\mathbb{R}^{2},\mathbb{R}^{1}\times0\right)$.\ss

Case \textsc{cat}=\textsc{diff}. It is possible to imitate the above proof for \textsc{pl}.
Alternatively, embedded Morse theory offers an interesting proof that is described in Remarks~\ref{2mrtremarks} following the proof of the $2$-dimensional Multiray Radialization Theorem (=~MRT, Theorem~\ref{mrt}) in Section~\ref{s:multiplehyperplanes}. \qed
\end{pfofourunknottingtheoremmeq2}

\begin{remark}[on overlapping $2$-dimensional results and techniques]\label{2dwealth}
Fortunately, one overlap simplifies: in
dimension $2$ one can usually shift results, at the statement level,
between any two of the three categories \textsc{diff}, \textsc{pl}, and \textsc{top} by
appealing to what can be called \hbox{``$2$-Hauptvermutung''} theorems, for which good references are~\cite{moise}, or Section~9 of~\cite{siebenmann05}.

Another simplification comes from the coincidence of these three
seemingly different properties for connected noncompact surfaces
with all boundary components noncompact: irreducibility, planarity,
and contractibility. A proof will be given as Proposition~\ref{equivcond}.

On the other hand, in dimension $2$, there is a somewhat
confusing wealth of techniques and names for them. We
now illustrate for the present article.

What is called the ``Irreducible \textsc{pl} Surface Classification
Theorem'' (=~PLCT) in Section~7 of~\cite{siebenmann05} is a direct \textsc{pl}
classification, using the very simple \textsc{pl} Schoenflies theorem, for
all \textsc{pl} connected noncompact planar surfaces with finitely many boundary
components all noncompact. This PLCT was used in~\cite{siebenmann05} to prove
the classical Schoenflies theorems that are used in the proofs of
$2$-HLT just given. Also, PLCT clearly \emph{directly} implies the \textsc{pl} case
of $2$-HLT.

Serious overlap of techniques is going to appear when we attack the \hbox{$2$-dimensional}
 Multiple Hyperplane Linearization Theorem\break\hbox{(=~$2$-MHLT)}
in Section~\ref{s:multiplehyperplanes}. The PLCT just mentioned will turn out to be
synonymous with the case for \emph{finitely many} boundary
components of the $2$-dimensional ``Gasket Recognition Theorem''
(=$2$-GRT), see Corollary~\ref{grt} below; this $2$-GRT generalizes PLCT in
that it allows an infinite number of boundary components. Toward
the end of Section~\ref{s:multiplehyperplanes}, we will observe that $2$-GRT is equivalent to a
classification of all contractible $2$-manifolds, and we will ultimately give
three amazingly different proofs of it, which respectively focus on
embedded Morse theory, end theory, and hyperbolic geometry.
\end{remark}

\begin{pfofourunknottingtheoremmeq3pldiff}
It suffices to prove that $A$ and $A'$, the closures of the two components of $\mathbb{R}^{m} - N$ in $\mathbb{R}^{m}$, are \textsc{cat} isomorphic to the closed upper half space $\mathbb{R}^{m}_{+} \subset \mathbb{R}^{m}$. From $A$ construct a \textnormal{CSI} pair $\alpha = (A \cup P,P)$ where $P$ is an open collar neighborhood in $A'$ of ${\partial}A' = N = {\partial}A$, the orientation of $P$ being inherited from $\mathbb{R}^{m}$. Similarly, construct $\alpha' = (A' \cup P',P')$.

\begin{assertion}\label{hlta1}
The \textnormal{CSI} composition $\alpha \alpha'$ of $\alpha$ and $\alpha'$ is \textsc{cat} isomorphic to the trivial \textnormal{CSI} pair $\varepsilon = (\mathbb{R}^{m},\mathbb{R}^{m}_{+})$ that is the identity for the \textnormal{CSI} operation.
\end{assertion}

\begin{pfhlta1}
The key idea is to perceive, embedded in $\mathbb{R}^{m}$, the coarse and fine gaskets for the \textnormal{CSI} operation $\alpha\alpha'$ as suggested by Figure~\ref{embeddedgasket}. Its coarse gasket can clearly be a bicollar neighborhood $G$ of $N$ in $\mathbb{R}^{m}$. We shall prove that a fine gasket is
\[G^{\ast} = G - \text{Int}T(r)\]
where $T(r)$ is a regular neighborhood of $r$ in $G$.
\begin{figure}[h!]
	\centerline{\includegraphics[scale=0.75]{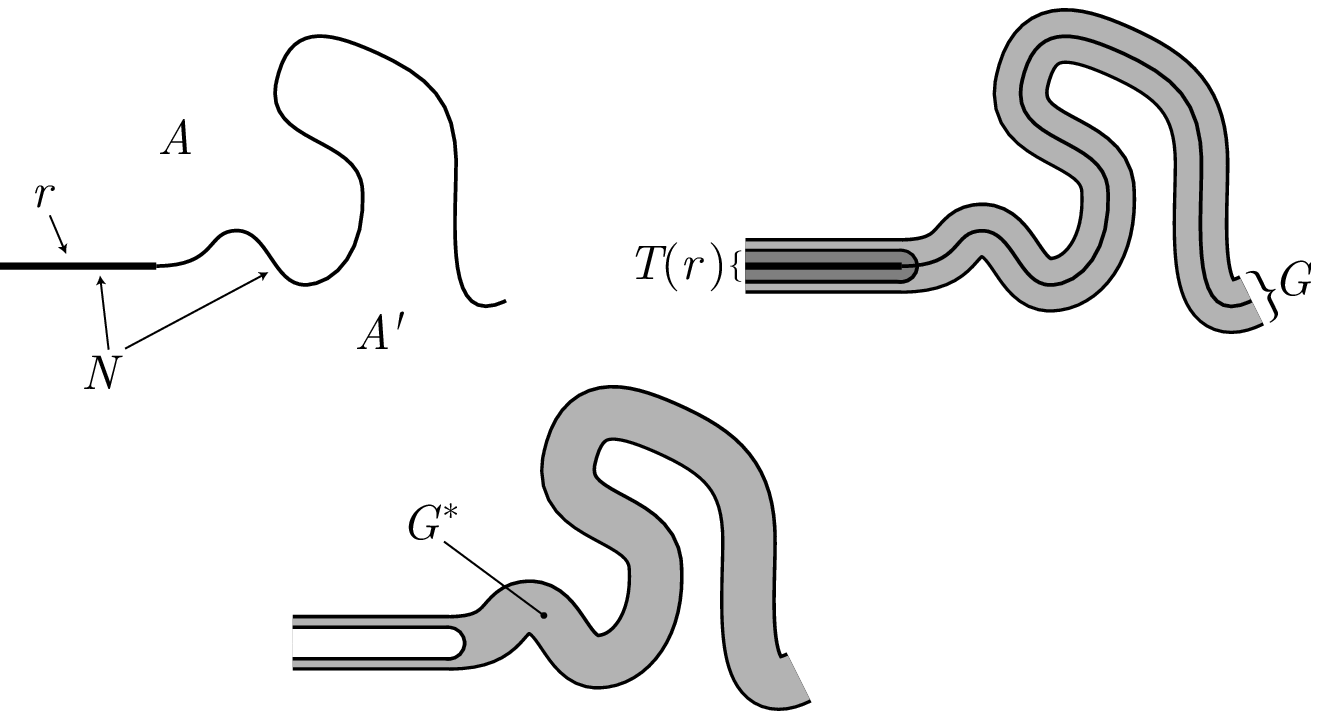}}
	\captionsetup{labelformat=ss,textfont={it,small},labelsep=newline,justification=centerlast,width=\textwidth}
	\caption{Gaskets $G$, $G^{\ast}$, and flange $T(r)$ for a hyperplane unknotting problem.}
	\label{embeddedgasket}
\end{figure}
Since $r$ is, by hypothesis, unknotted in $N$, this $G^{\ast}$ is easily seen to be a gasket; it has three boundary components. Note that the three closed complementary components of $\text{Int}G^{\ast}$ in $\mathbb{R}^{m}$ are respectively isomorphic to $A$, $A'$ and $T(r)$.\ss

Since $T(r)$ is \textsc{cat} degree $+1$ isomorphic to $\mathbb{R}^{m}_{+}$, it is a \textnormal{CSI} flange, and we conclude from the definition of \textnormal{CSI} that the \textnormal{CSI} pair $(\mathbb{R}^{m},T(r))$ is (up to \textnormal{CSI} pair isomorphism) a \textnormal{CSI} product $\alpha \alpha'$, whose coarse and fine gaskets are $G$ and $G^{\ast}$.\ss

Since $r$ is, by hypothesis, unknotted in $\mathbb{R}^{m}$, it follows, by \textsc{pl} and \textsc{diff} ambient regular neighborhood uniqueness (see Section~\ref{s:regnhbds}), that the complement of $\Int \, T(r)$ in $\mathbb{R}^{m}$ is \textsc{cat} isomorphic to $\mathbb{R}^{m}_{+}$. Therefore
\begin{equation}\label{e:pairtrivial}
	(\mathbb{R}^{m},T(r)) \cong (\mathbb{R}^{m},\mathbb{R}^{m}_{+}) = \varepsilon, \tag{\dag}
\end{equation}
where $\cong$ denotes \text{CSI} pair isomorphism.\ss

Taken together, the last two paragraphs prove the assertion that $\alpha \alpha' \cong \varepsilon$. \qed
\end{pfhlta1}

The assertion quickly implies the theorem using the Eilenberg-Mazur swindle. First, $\varepsilon \cong \alpha \alpha' \cong \alpha' \alpha$ using commutativity, so $\alpha$ and $\alpha'$ are mutually inverse. Whence, the infinite product swindle using associativity
\[\alpha \cong \alpha \varepsilon \varepsilon \varepsilon \cdots
  \cong \alpha (\alpha' \alpha) (\alpha' \alpha) \cdots
  \cong (\alpha \alpha') (\alpha \alpha') (\alpha \alpha') \cdots
  \cong \varepsilon \varepsilon \varepsilon \cdots
  \cong \varepsilon.
\]
Also, $\alpha' \cong \alpha' \varepsilon \cong \alpha' \alpha \cong \varepsilon$. Thus, $A$ and $A'$ are \textsc{cat} isomorphic to $\mathbb{R}^{m}_{+}$ as required. This establishes the HLT (Theorem~\ref{t:ourunknottingtheorem}) for $m\geq3$ and \textsc{cat}=\textsc{pl} or \textsc{cat}=\textsc{diff}. \qed
\end{pfofourunknottingtheoremmeq3pldiff}

\medskip

\begin{pfofourunknottingtheoremmeq3top}
Like Cantrell, we will use only elementary arguments. In particular, recall that, by using our refined version of the definition of \textnormal{CSI} for \textsc{top} pairs given at the end of Section~\ref{s:csi}, we have avoided use of the Stable Homeomorphism Theorem (=~SHT) in establishing the basic properties of \textnormal{CSI}.\ss

We now proceed to adapt to \textsc{top} the above proof of the \textsc{diff} version. It adapts routinely except for the two short paragraphs that apply, to the ray $r$ in $\mathbb{R}^{m}$, the uniqueness of \textsc{diff} regular neighborhoods to deduce the \textsc{diff} \textnormal{CSI} pair isomorphism \eqref{e:pairtrivial}. For \textsc{top}, we now establish \eqref{e:pairtrivial} using the \textsc{top} \emph{open} regular neighborhood uniqueness of Section~\ref{s:regnhbds}.\ss

We can and do choose a linear structure on $N$ such that $r$ is a linear ray in $N$. The \textsc{top} bicollar neighborhood $G$ of $N$ was first established by M.~Brown in~\cite{brown62}; a pleasant alternative construction is due to R.~Connelly, see~\cite[Essay~I,~p.~40]{kirbysiebenmannbook}. This $G$ can then be viewed as a \textsc{diff} gasket of which $r$ and $N$ are smooth submanifolds. However, the inclusion of $G$ into $\mathbb{R}^{m}$ is in general not a \textsc{diff} embedding. Let $T(r)$ be a \textsc{diff} regular neighborhood of $r$ in $G$ and let $G^{\ast} = G - \text{Int}T(r)$ be the resulting fine gasket.

\begin{assertion}\label{hlta2}
The closed complement of $\Int \, T(r)$ in $\mathbb{R}^{m}$ is \textsc{top} isomorphic to $\mathbb{R}^{m}_{+}$. Hence, \eqref{e:pairtrivial} holds for \textsc{top} \textnormal{CSI} pair isomorphism.
\end{assertion}

\begin{pfhlta2}
By hypothesis, $r$ is unknotted in $\mathbb{R}^{m}$. Thus, we can now observe that:
\smallskip

\Item(1)! $\mathbb{R}^{m}$ is an open topological mapping cylinder neighborhood of $r$ \hbox{in $\mathbb{R}^{m}$}.

\Item(2)! The \textsc{diff} regular neighborhood $T(r)$ in $G$ is a closed mapping cylinder neighborhood of $r$ in $\mathbb{R}^{m}$ with topological frontier ${\partial}T(r)$ bicollared in $\mathbb{R}^{m}$.
\smallskip

For (1), define $f:\mathbb{R}^{m-1}\to r=[0,\infty)$ by $f(x)=\left\|x\right\|$ which is a proper surjection. The mapping cylinder $\text{Map}(f)$ embeds homeomorphically onto $\mathbb{R}^{m}$ by the quotient map 
$F:\mathbb{R}^{m-1}\times[0,\infty)\to\mathbb{R}^{m}$ that extends $(0,x)\mapsto (0,f(x))$ and maps each hemisphere with center the origin onto the full sphere containing it, crushing (only) the hemisphere boundary onto a single point of $r$ (see Figure~\ref{mapcylembed}).
\begin{figure}[h!]
	\centerline{\includegraphics{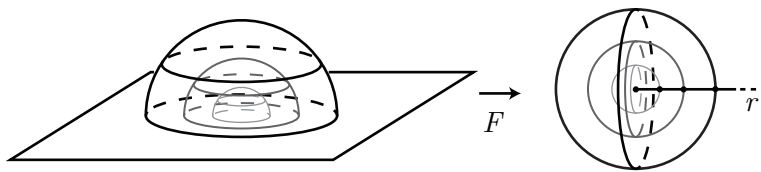}}
	\captionsetup{labelformat=ss,textfont=it}
	\caption{Quotient map $F:\mathbb{R}^{m-1}\times[0,\infty)\to\mathbb{R}^{m}$.}
	\label{mapcylembed}
\end{figure}
Fact (2) follows similarly from our peculiar definition of \textsc{diff} regular neighborhood of a ray (see Section~\ref{s:regnhbds}).\ss

By open mapping cylinder uniqueness (Theorem~\ref{t:omcnu}), these two facts imply that the closed complement in $\mathbb{R}^{m}$ of $\text{Int}\,T(r)$ is \textsc{top} isomorphic to ${\partial}T(r) \times [1,\infty)$. This completes the proof of the assertion. \qed
\end{pfhlta2}

The proof of the HLT (Theorem~\ref{t:ourunknottingtheorem}) for \textsc{top} now concludes as in the \textsc{diff} case. \qed
\end{pfofourunknottingtheoremmeq3top}

\begin{remark}
In the above elementary proof of the \textsc{top} version of the HLT (Theorem~\ref{t:ourunknottingtheorem}), it is \emph{not} proved that the self homeomorphism $g$ of $\mathbb{R}^{m}$ sending $N$ to a linear hyperplane can be chosen ambient isotopic to a linear map. It is always ambient isotopic, but to prove this one needs the SHT of~\cite{kirby69,freedmanquinn}.
\end{remark}

We close this section with some historical remarks on the Cantrell-Stallings theorem.
\medskip

\Item(1)! Progress towards the \textsc{top} theorem from Mazur~\cite{mazurbams} 1959 to Cantrell's full \textsc{top} unknotting theorem in~\cite{cantrell} 1963 was incremental. In 1960, Marston Morse~\cite{morse} extended~\cite{mazurbams} to prove the \textsc{top} version under the extra hypothesis that $N\cup\infty$ is a \textsc{top} bicollared $(m-1)$-sphere in the $m$-sphere $\mathbb{R}^{m}\cup \infty$. Morton Brown's parallel but amazingly novel article~\cite{brown60} 1960 achieved this, too. Then Brown~\cite{brown62} 1962 proved a collaring theorem that replaced the above bicollaring hypothesis by local flatness in the $m$-sphere $\mathbb{R}^{m}\cup \infty$. From 1962 onwards, Cantrell's goal (reached already in 1963) has been viewed as the problem of proving that a codimension $1$ sphere in a sphere of dimension $>3$ cannot have a single `singular' point where local flatness fails.

\Item(2)! Huebsch and Morse~\cite{huebschmorse} 1962 established the \textsc{diff} version under the much stronger unknotting hypothesis that $N$ be linear outside a bounded set in $\mathbb{R}^{m}$.

\Item(3)! Our proof (for any \textsc{cat}) can be viewed as a radical reorganization using \textnormal{CSI} of Cantrell's proof for \textsc{top}~\cite{cantrell}. On the other hand, it was Stallings~\cite{stallings65} who first pointed out the \textsc{diff} version, and formulated a version valid in \emph{all} dimensions. The proof of the \textsc{top} version requires extra precautions (for us, \textsc{diff} gaskets) and extra argumentation (for us, open mapping cylinder neighborhood uniqueness), but, in compensation, it clearly reproves, ab initio, the Schoenflies theorem of Mazur~\cite{mazurbams} and Brown~\cite{brown60,brown62}.

\Item(4)! The apparent novelty, which made us write down the above proof, was our reformulation (circa 2002) of much of the geometry of Cantrell's proof as standard facts about \textnormal{CSI}. This explicit use of some sort of connected sum was, of course, suggested by Mazur's pioneering article~\cite{mazurbams}; compare the `almost \textsc{pl}' version of the Schoenflies theorem in~\cite{rourkesanderson}.

\Item(5)! \textnormal{CSI} itself was not a novelty. Gompf~\cite{gompf} had showed that an infinite \textnormal{CSI} of smooth $4$-manifolds, each homeomorphic to $\mathbb{R}^{4}$, is well defined. He achieved this by proving a multiple ray unknotting result using finger moves; his proof readily extends to all dimensions $\geq4$ (in fact, it is simpler in dimensions $>4$). Gompf used this observation and the infinite product swindle to show that an exotic $\mathbb{R}^{4}$ cannot have an inverse under \textnormal{CSI}. The reader can now check this as an exercise.

\Item(6)! Stallings~\cite{stallings65} deals explicitly only with the \textsc{diff} case. He avoids all connected sum notions. Indeed, the basic entity for which he defines an infinite product operation is a (proper) \textsc{diff} embedding $f: \mathbb{R}^{m-1} \to \mathbb{R}^{m}$ (with an unknotted ray and $m \geq 3$). Stallings' exposition seems to invite formalization in terms of a \emph{pairwise} \textnormal{CSI} operation.

\Item(7)! Johannes de Groot 1972~\cite{degroot} announced a proof of Cantrell's \textsc{top} HLT by generalization of M.~Brown's proof of the \textsc{top} Schoenflies theorem. Regrettably, de Groot died shortly thereafter and no manuscript has surfaced since.

\section{Basic Ray Unknotting in Dimensions >\hspace{0.2em}3}\label{s:rayunknot}

The first goal of this section is to explain the well known fact, mentioned in Remark~\ref{rhs} above, that rays in $\mathbb{R}^{m}$ are related by an ambient isotopy provided that $m > 3$. Then we go on, still assuming $m > 3$, to classify so called multirays in terms of the proper homotopy classes of their component rays.\ss

Throughout this section, \textsc{cat} is one of \textsc{top}, \textsc{pl} or \textsc{diff}. The following basic result will be needed for $1$-manifolds mapping into manifolds of dimension $m > 3$.

\begin{theorem}[Stable Range Embedding Theorem]\label{t:stableembedding}
\hfill\break
Let $f:N^{n} \to M^{m}$ be a proper continuous map of \textsc{cat} manifolds, possibly with boundary. If $2n+1 \leq m$, then $f$ is properly homotopic to a \textsc{cat} embedding $g:N \to M$ such that $g(N)$ lies in $\Int M$. Further, if $2n+2 \leq m$ and $g'$ is a second such embedding properly homotopic to $f$, then there exists a \textsc{cat} ambient isotopy $h_{t}:M \to M$, $0 \leq t \leq 1$, such that $h_{0} = \textnormal{id}|_{M}$ and $h_{1}g=g'$.
\end{theorem}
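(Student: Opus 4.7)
The plan is a standard general-position argument, handled first in CAT~$=$~\textsc{pl} or \textsc{diff} where general position is native, and then bootstrapped to \textsc{top}. Before anything else, I would use a bicollar of $\partial M$ to properly homotope $f$ into $\Int M$; this step is free of dimension hypotheses. The rest splits into (A) producing the embedding $g$ and (B) converting proper homotopy into ambient isotopy via a concordance argument.

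For part (A) in \textsc{diff}, I would approximate $f$ by a smooth proper map (using a proper exhaustion of $M$ and partitions of unity), then on each compact piece apply transversality to perturb successively to an immersion and then to an embedding. In the stable range $2n+1\leq m$, the (generic) double-point set of an immersion has expected dimension $2n-m\leq -1$, hence is empty, and the perturbations are small enough to be properly homotopic to the original. A diagonal argument assembles the compact-by-compact perturbations into a single proper homotopy from $f$ to a proper \textsc{diff} embedding $g$. The \textsc{pl} case is the same with PL general position on triangulations replacing transversality. For \textsc{top}, I would reduce to the \textsc{pl}/\textsc{diff} case by approximating $f$ on compacta by a \textsc{pl} map using Kirby--Siebenmann smoothing/handle theory (this is where $m \geq 5$, or $m=4$ with the known exceptions, really enters).

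For part (B), given a proper homotopy $H\colon N\times I\to M$ between embeddings $g,g'$, I would define the level-preserving proper map $\widetilde H\colon N\times I\to M\times I$ by $\widetilde H(x,t)=(H(x,t),t)$ and apply part (A) to $\widetilde H$, whose domain has dimension $n+1$ and codomain has dimension $m+1$. The existence hypothesis $2(n+1)+1\leq m+1$ is exactly the assumed inequality $2n+2\leq m$. Arranging the proper homotopy to be constant near the ends and keeping the perturbation level-preserving via a product-collar argument on $M\times[0,\varepsilon]$ and $M\times[1-\varepsilon,1]$, the output is a CAT concordance from $g$ to $g'$. The classical \dfb{concordance-implies-isotopy} theorem in the stable range (proved by isotopy extension applied to a tubular/regular neighborhood of the concordance, itself a trivial $I$-bundle over $g(N)$) then upgrades this concordance to the required ambient CAT isotopy $h_t$.

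The main obstacle is the \textsc{top} case, since general position, transversality, and the level-preserving refinement of a concordance are not native operations on purely topological manifolds. I expect the bulk of the technical effort to be citing topological transversality (or smoothing theory) in Step (A) and in the level-preserving step of (B), and checking that the approximations can be taken small enough to remain proper and properly homotopic to the originals. The \textsc{pl} and \textsc{diff} cases, by contrast, should be essentially textbook general position plus the standard concordance--isotopy machinery.
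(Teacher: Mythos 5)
Your \textsc{pl}/\textsc{diff} outline is essentially the argument the paper has in mind: it gives no proof at all for those categories, but simply cites classical general position (Whitney, Bing--Kister, Hudson--Zeeman), and your compact-exhaustion-plus-general-position sketch, together with the track-of-the-homotopy trick $\widetilde H(x,t)=(H(x,t),t)$ in the range $2n+2\leq m$, is the standard route (though your one-line justification of ``concordance implies isotopy'' via a trivial $I$-bundle neighborhood is glib; in the stable range one should argue this directly, e.g.\ as in Hudson--Zeeman, rather than invoke it as a black box).

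The genuine gap is your treatment of \textsc{top}. You propose to reduce to \textsc{pl}/\textsc{diff} by smoothing or topological transversality ``where $m\geq 5$, or $m=4$ with the known exceptions, really enters,'' but the theorem as stated carries no such dimension restriction beyond $2n+1\leq m$, and the paper needs it precisely in low stable dimensions: for $n=1$, $m=4$ (multiray unknotting in $\mathbb{R}^4$, and the Arc Flattening Lemma in $S^4$). Moreover an arbitrary \textsc{top} manifold $M$ need not admit any \textsc{pl} or \textsc{diff} structure, so ``smooth the ambient manifold and quote the smooth case'' is not available in the stated generality; and the real difficulty in \textsc{top} is not approximating the map (simplicial approximation is cheap) but producing a \emph{locally flat} embedding and an ambient \textsc{top} isotopy --- taming, not smoothing. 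The route the paper points to is entirely different and deliberately elementary: Homma's 1962 method as applied by Gluck to embeddings in the trivial range, which works in all dimensions of the stable range without the Stable Homeomorphism Theorem, Kirby--Siebenmann smoothing theory, or Freedman--Quinn; one then passes from the compact relative statements in the literature to the proper, noncompact, nonrelative statement by a skeletal induction over the nerve of a suitable covering (Kirby--Siebenmann, Essay I, Appendix C). Importing smoothing/transversality machinery would both fail to cover the needed cases and undercut the paper's insistence (Remarks after Theorem~\ref{maintheorem}) that any bootstrapping proof of the \textsc{top} case goes through Homma's method.
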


For \textsc{cat}=\textsc{pl} or \textsc{cat}=\textsc{diff}, the proof is a basic general position argument that can be found in many textbooks. Early references are~\cite{whitney36} and~\cite{bingkister}.\ss

For \textsc{top}, the proof is still surprisingly difficult. One needs a famous method of T.~Homma from 1962~\cite{homma62}, as applied by H.~Gluck~\cite{gluckembbams,gluckembann}. Many expositions of these types of results (in particular~\cite{gluckembann}) are given in a compact relative form, from which one has to deduce the stated noncompact, nonrelative but proper version by a classic argument involving a skeletal induction in the nerve of a suitable covering (see Essay I, Appendix C in~\cite{kirbysiebenmannbook}).\ss

Next, we show that, in some cases of current interest, all rays are properly homotopic.

\begin{lemma}[Simplest Proper Ray Homotopies]\label{l:simplestproperray}
\hfill\break
Let $X$ be locally arcwise connected and locally compact. Suppose $X$ admits a connected closed collar neighborhood $Y\times [0,\infty)$ of Alexandroff infinity. Then any two proper maps $[0,\infty) \to X$ are properly homotopic.
\end{lemma}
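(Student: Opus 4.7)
The plan is to show every proper map $\alpha:[0,\infty)\to X$ is properly homotopic to a fixed ``standard'' proper ray $\rho(t)=(y_0,t)$ lying inside the collar, where $y_0\in Y$ is a chosen basepoint; transitivity of proper homotopy then gives the lemma.

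First I note that $Y$ inherits arcwise connectedness from the hypotheses: the interior $Y\times(0,\infty)$ of the collar is open in $X$ (hence locally arcwise connected), forcing $Y$ itself to be connected and locally arcwise connected, hence arcwise connected, and likewise for the whole collar. Now let $\alpha:[0,\infty)\to X$ be proper. Properness applied to the compact set $X\setminus(Y\times[0,\infty))$ yields $T\geq0$ with $\alpha([T,\infty))\subset Y\times[0,\infty)$. Using arcwise connectedness of the collar, pick a path $\sigma:[0,T]\to Y\times[0,\infty)$ ending at $\alpha(T)$ and define $\tilde\alpha$ to equal $\sigma$ on $[0,T]$ and $\alpha$ on $[T,\infty)$; then $\tilde\alpha$ is a proper ray living entirely in the collar. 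I establish $\alpha\simeq\tilde\alpha$ properly by combining the stationary homotopy $(t,u)\mapsto\alpha(t)$ on $[T,\infty)\times[0,1]$ with a rel-$T$ homotopy on $[0,T]\times[0,1]$ from $\alpha|_{[0,T]}$ to $\sigma$. Such a rel-$T$ homotopy exists with compact image because the path space $\{\gamma:[0,T]\to X \mid \gamma(T)=\alpha(T)\}$ deformation retracts to the constant path at $\alpha(T)$ via $(\gamma,u)\mapsto \bigl(t\mapsto\gamma(\max(t,uT))\bigr)$, and both $\alpha|_{[0,T]}$ and $\sigma$ lie in this contractible space.

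Working next inside the collar, write $\tilde\alpha(t)=(a(t),s(t))$. The homotopy $K_1(t,u)=(a(t),s(t)+ut)$ ends at $(a(t),s(t)+t)$; it is proper because for any compact $K_Y\times[0,N]$ in the collar, the conditions $a(t)\in K_Y$ and $s(t)+ut\leq N$ force $s(t)\leq N$, and the joint occurrence of $a(t)\in K_Y$ and $s(t)\leq N$ is bounded in $t$ by properness of $\tilde\alpha$. Next, choosing a path $\delta:[0,1]\to Y$ from $a(0)$ to $y_0$, the concatenated homotopy $a_u(t)=a((1-2u)t)$ for $u\in[0,1/2]$ followed by $a_u(t)=\delta(2u-1)$ for $u\in[1/2,1]$ gives $a_0=a$ and $a_1\equiv y_0$. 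The homotopy $K_2(t,u)=(a_u(t),s(t)+t)$ is proper because $s(t)+t\geq t$ bounds $t$ on preimages of $K_Y\times[0,N]$. Finally $K_3(t,u)=(y_0,t+(1-u)s(t))$ properly carries $(y_0,s(t)+t)$ to $\rho(t)=(y_0,t)$, for the same reason. Composing these proper homotopies yields $\alpha\simeq\rho$, and symmetrically $\beta\simeq\rho$.

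The main obstacle is maintaining properness throughout. The pivotal trick is the ``growing'' homotopy $K_1$, which exploits properness of the original ray to guarantee that the second coordinate of $\tilde\alpha$, once replaced by $s(t)+t$, is itself a proper function of $t$; from that point on every subsequent homotopy takes place in slices of uniformly bounded second coordinate, so properness becomes automatic. The rel-$T$ patching of Step~1 is the only place where one must argue carefully about the topology of the ambient space rather than the collar, and is handled by the contractibility of the path space above.
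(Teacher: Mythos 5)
Your proof is correct and takes essentially the same route as the paper's: push the ray into the collar, then homotope it inside $Y\times[0,\infty)$ to a radial ray, using a path in $Y$ to move the base point; your explicit homotopies $K_1$, $K_2$, $K_3$ simply spell out what the paper dismisses as an easy explicit proper homotopy. The one soft spot --- the claim that $Y\times(0,\infty)$ is open in $X$, invoked to get arcwise connectedness of $Y$ --- is no worse than the paper's own implicit assumption that any two points of $Y$ are joined by a path.
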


\begin{proof}
Any proper map $f: [0,\infty) \to X$ is proper homotopic to one with image in the closed subset $Y \times [0,\infty) \subset X$, so we can and do assume that $X$ is $Y \times [0,\infty)$.\ss

Then, writing $f(0)=(y,t_{0}) \in Y \times [0,\infty)$, it is easy to construct an explicit proper homotopy of $f$ to the proper continuous radial embedding $r_{y}:[0,\infty) \hookrightarrow X = Y \times [0,\infty)$ sending $t \mapsto (y,t)$ for \hbox{all $t$}.\ss

Finally, for any two points $y$ and $y'$ in $Y$, there is a path from $y$ to $y'$ in $Y$ and any such path provides an explicit proper homotopy from $r_{y}$ to the similarly defined radial embedding $r_{y'}$.
\end{proof}

These last two results, when combined with the Cantrell-Stallings theorem as stated in the last section (Theorem~\ref{t:ourunknottingtheorem}), yield the following Hyperplane Linearization Theorem already announced there.
\begin{theorem}\label{maintheorem}
For $m \ne 3$, any \textsc{cat} submanifold $N$ of $\mathbb{R}^{m}$ that is isomorphic to $\mathbb{R}^{m-1}$ is unknotted in the sense that there is a \textsc{cat} automorphism $h$ of $\mathbb{R}^{m}$ such that $h(N) = \mathbb{R}^{m-1} \times 0 \subset \mathbb{R}^{m}$.
\end{theorem}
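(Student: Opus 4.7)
The approach is to deduce Theorem~\ref{maintheorem} from the HLT (Theorem~\ref{t:ourunknottingtheorem}) by exhibiting, for each $m \ne 3$, a specific ray in $N$ that is simultaneously unknotted in $N$ and in $\mathbb{R}^m$. The case $m = 2$ is immediate, since Remark~\ref{hs2d} tells us that the ray unknotting hypothesis of the HLT is superfluous in dimension two. Thus the real work lies in the range $m \geq 4$.

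For $m \geq 4$, fix any \textsc{cat} isomorphism $\phi : \mathbb{R}^{m-1} \to N$ and set $r = \phi(L)$, where $L \subset \mathbb{R}^{m-1}$ is a standard linear ray. By construction $r$ is unknotted in $N$, with $\phi$ itself serving as the linearizing chart. To show that $r$ is also unknotted in $\mathbb{R}^m$, first observe that $\mathbb{R}^m$ admits a connected closed collar neighborhood of Alexandroff infinity (for example the complement of an open ball), so Lemma~\ref{l:simplestproperray} applies and shows that any two proper continuous maps $[0,\infty) \to \mathbb{R}^m$ are properly homotopic. In particular $r$, viewed as a proper continuous map, is properly homotopic to any affine linear ray $r_0 \subset \mathbb{R}^m$. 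Now invoke the Stable Range Embedding Theorem (Theorem~\ref{t:stableembedding}) with source dimension $n = 1$ and target $\mathbb{R}^m$: the crucial inequality $2n + 2 \leq m$ holds precisely when $m \geq 4$, and yields a \textsc{cat} ambient isotopy of $\mathbb{R}^m$ carrying $r_0$ to $r$. Its time-$1$ automorphism $h$ of $\mathbb{R}^m$ satisfies $h(r_0) = r$, so $h^{-1}(r) = r_0$ is linear, which is the definition of $r$ being unknotted in $\mathbb{R}^m$. Applying the HLT to this ray produces a \textsc{cat} automorphism $g$ of $\mathbb{R}^m$ with $g(N)$ linear, completing the proof.

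The main obstacle is the \textsc{top} version of the Stable Range Embedding Theorem, which is not elementary and relies on Homma's method as elaborated by Gluck. Modulo that input the argument is formal. It is worth emphasizing that this plan sidesteps the need for any non-trivial ray unknotting statement inside $N$ itself: unknottedness of $r$ in $N$ is built in tautologically via the chart $\phi$. This is important because in the borderline case $m = 4$ one has $N \cong \mathbb{R}^3$, where a direct stable-range argument for rays inside $N$ would require $2\cdot 1 + 2 \leq 3$ and therefore fail; by instead importing unknottedness from $\phi$, the lower bound $m \geq 4$ is needed only once, to handle the ambient manifold $\mathbb{R}^m$.
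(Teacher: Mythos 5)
Your proposal is correct and is essentially the paper's own argument: the paper deduces Theorem~\ref{maintheorem} by combining Lemma~\ref{l:simplestproperray} (all proper rays in $\mathbb{R}^m$ are properly homotopic) with the uniqueness clause of the Stable Range Embedding Theorem (Theorem~\ref{t:stableembedding}, needing $2\cdot1+2\leq m$, i.e.\ $m\geq4$) to see that a ray carried into $N$ by a chart is unknotted in $\mathbb{R}^m$, and then applies the HLT (Theorem~\ref{t:ourunknottingtheorem}), with $m=2$ covered by Remark~\ref{hs2d}. You have merely written out the details the paper leaves implicit, including the correct observation that unknottedness of the ray in $N$ comes tautologically from the chart.
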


\begin{remarks}\label{sec7remarks}
\Item(1)! Remember that, by convention, a \textsc{cat} submanifold is a closed subset and is assumed \textsc{cat} locally flat unless the contrary is explicitly stated.

\Item(2)! The case \textsc{cat}=\textsc{top} of Theorem~\ref{maintheorem} is Cantrell's result as he formulated it. Beware that (still today) any completely bootstrapping proof seems to require an exposition of Homma's method.

\Item(3)! It is well known that a proper ray (any \textsc{cat}) may be knotted in $\mathbb{R}^{3}$. Fox and Artin~\cite[Example~1.2]{foxartin} exhibited the first such ray, Alford and Ball~\cite{alfordball} produced infinitely many knot types and conjectured uncountably many exist, and McPherson~\cite{mcpherson} published a proof of this conjecture (earlier, Giffen 1963, Sikkema, Kinoshita, and Lomonaco 1967, and McPherson 1969 had announced proofs~\cite[p.273]{burgesscannon}). The boundary of a closed regular neighborhood of any such knotted ray is a knotted hyperplane in $\mathbb{R}^{3}$. Still, even in this dimension the knot type of any \textsc{cat} hyperplane $N\subset\mathbb{R}^{3}$ is determined by the knot type in $\mathbb{R}^{3}$ of any \textsc{cat} ray $r\subset N$~\cite{sikkema} (see also~\cite{harroldmoise}); in fact, $N$ is ambient isotopic to the boundary of a \textsc{cat} closed regular neighborhood of $r$ in $\mathbb{R}^{3}$~\cite{calcutking2}. Thus, one of the two closed complementary components of $N$ in $\mathbb{R}^3$ is \textsc{cat} isomorphic to $\mathbb{R}_{+}^{3}$.

\Item(4)! Here is an immediate corollary for \textsc{cat} = \textsc{diff} that concerns the still mysterious dimension 4. Suppose that  $N^3 \subset S^4$ is a smoothly embedded $3$-sphere such that the pair $(S^4,N^3)$ is \emph{not} \textsc{diff} isomorphic to $(S^4, S^3)$ and thus is a counterexample to the unsettled \textsc{diff} \hbox{$4$-dimensional} Schoenflies conjecture. Then, nevertheless, for any point $p$ in $N^3$ one has
$(S^4 - p, N^3 - p)\cong (\mathbb{R}^4, \mathbb{R}^3)$.

\Item(5)! We have seen that the Cantrell-Stallings unknotting theorem is closely related to the fact that: \emph{if $\alpha := (M,P)$ is a dimension $m$ \textsc{cat} \textnormal{CSI} pair that has an inverse up to degree $+1$ isomorphism in the commutative semigroup of isomorphism classes of \textsc{cat} \textnormal{CSI} pairs of dimension $m \geq 3$ under \textnormal{CSI} sum, then $(M,P)$ is in the identity class, namely that of
$(\mathbb{R}^m,\mathbb{R}_{+}^m)$}. Thus it is perhaps of interest to ask about other algebraically expressible facts about this semigroup. For example: \emph{is it true that $\alpha \cong \alpha\beta$ always implies that $\alpha \cong \alpha\beta^{\infty}$?} Curiously, this is false for certain $(M,P)$ where $M$ has more than one end, as Figure~\ref{aneqabinfty} indicates.
\begin{figure}[h!]
	\centerline{\includegraphics{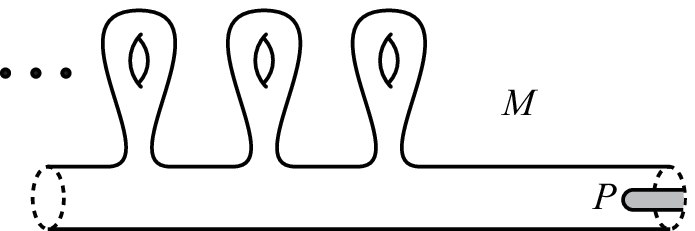}}
	\captionsetup{labelformat=ss,textfont={it,small},labelsep=newline,justification=centerlast,width=\textwidth}
	\caption{\textnormal{CSI} pair $(M,P)$ where $M$ has two ends and one end is collared.}
	\label{aneqabinfty}
\end{figure}
Although this figure is for dimension $2$, it clearly has analogs in all dimensions $>2$. Is this implication true at least when $M$ has one end? Or when $M$ is the interior of a compact manifold?
\end{remarks}

This concludes our exposition of the Cantrell-Stallings theorem.

\section{Singular and Multiple rays}\label{singmultrays}

This section shows that multiple rays embed and unknot much like single rays. We define a \dfb{singular ray} in a locally compact space $X$ to be a proper continuous map $[0,\infty) \to X$. In Section~\ref{s:multiplehyperplanes}, singular rays will be a tool for unknotting multiple hyperplanes in dimensions $>3$.\ss

\begin{lemma}\label{l:rayproper}
Let $f_{i}: [0,\infty) \to X$, with $i$ varying in the finite or countably infinite discrete index set $S$, be singular rays in a locally compact, sigma compact space $X$. Then, for each $i \in S$, one can choose a proper homotopy of $f_{i}$ to a singular ray $f'_{i}$ such that the rule $(i,x) \mapsto f'_{i}(x)$ defines a proper map $f':S \times [0,\infty) \to X$.
\end{lemma}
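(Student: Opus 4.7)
The plan is to use the hypotheses on $X$ to fix a compact exhaustion, and then modify each ray $f_i$ by an appropriate translation of its parameter so that, as $i$ varies, the new rays uniformly escape every compact set.

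Concretely, I would first use local compactness and sigma compactness of $X$ to pick a compact exhaustion $K_1 \subset K_2 \subset \cdots$ of $X$ with $K_n \subset \mathrm{int}\,K_{n+1}$ and $\bigcup_n K_n = X$. If $S$ is finite there is nothing to do (the disjoint union of finitely many proper maps is automatically proper), so assume $S$ is countably infinite and enumerate it as $\{i_1,i_2,\ldots\}$. For each $k$, properness of $f_{i_k}$ makes $f_{i_k}^{-1}(K_k)$ a bounded subset of $[0,\infty)$, so I can choose $s_{i_k}\geq 0$ large enough that
\[
 f_{i_k}\bigl([\,s_{i_k},\infty)\bigr)\cap K_k=\emptyset.
\]
Then I define the new ray
\[
 f'_{i_k}(t):=f_{i_k}(t+s_{i_k}), \qquad t\in[0,\infty),
\]
and connect $f_{i_k}$ to $f'_{i_k}$ by the straight-line reparameterization homotopy
\[
 H_{i_k}(t,u):=f_{i_k}(t+u\,s_{i_k}), \qquad (t,u)\in[0,\infty)\times[0,1].
\]

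Next I would verify the three required properties. \emph{(i)} Each $f'_{i_k}$ is a (singular) ray: proper, because $(f'_{i_k})^{-1}(K)$ is a translate of a bounded set $f_{i_k}^{-1}(K)$. \emph{(ii)} The homotopy $H_{i_k}$ is proper: for any compact $K\subset X$, if $f_{i_k}^{-1}(K)\subset[0,b]$, then $H_{i_k}^{-1}(K)\subset[0,b]\times[0,1]$, which is compact. \emph{(iii)} The combined map $f'\colon S\times[0,\infty)\to X$, $(i,t)\mapsto f'_i(t)$, is proper: given compact $K\subset X$, pick $n$ with $K\subset K_n$; then for every $k\geq n$ the inclusions $K\subset K_n\subset K_k$ together with the defining property of $s_{i_k}$ give
\[
 f'_{i_k}\bigl([0,\infty)\bigr)\cap K \subset f_{i_k}\bigl([s_{i_k},\infty)\bigr)\cap K_k =\emptyset,
\]
so $(f')^{-1}(K)\subset\{i_1,\ldots,i_{n-1}\}\times[0,\infty)$, which is a finite union of the compact sets $\{i_k\}\times (f'_{i_k})^{-1}(K)$ and hence compact.

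No step looks genuinely difficult — this is really just an organized application of properness combined with a diagonal-style choice of translation parameters — but the single place where one must pay attention is point \emph{(ii)}, the properness of the connecting homotopies, since one is tempted to think of the ``translation'' as being free but it must be carried out along the ray itself so that the track of the homotopy stays inside $f_{i_k}([0,\infty))$, a proper set. Once that is recognized, the construction and the verifications are direct.
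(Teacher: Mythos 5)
Your proof is correct and follows essentially the same route as the paper: fix a compact exhaustion, push the start of the $k$-th ray past $K_k$ (the paper precomposes with the retraction $[0,\infty)\to[a_k,\infty)$ instead of translating the parameter, an immaterial difference), and observe that each compactum then meets only finitely many of the new rays, reducing properness of $f'$ to the finite case. Your explicit check that the connecting homotopies are themselves proper is a welcome detail the paper only asserts.
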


\begin{proof}
The choice $f_{i}=f'_{i}$ will do, in case $S$ is finite. When $S$ is infinite, we can assume $S=\mathbb{Z}_{+}$. Then, choose in $X$ a sequence of compacta $\emptyset = K_{1} \Subset K_{2} \Subset K_{3} \Subset \cdots$ with $X=\cup_{j}K_{j}$. By properness of $f_{i}$, there exists $a_{i}$ in $[0,\infty)$ so large that $f_{i}( [a_{i},\infty) ) \subset X-K_{i}$. Define $f'_{i}$ to be $f_{i}$ precomposed with the retraction $[0,\infty) \to [a_{i},\infty)$.\ss

It is easily seen that $f'_{i}$ is properly homotopic to $f_{i}$. The properness of the resulting $f'_{i}$ now follows. Indeed, if $K \subset X$ is compact, then $K$ lies in the interior of $K_{i}$ for some $i$, hence $f'_{j}( [0,\infty) ) \cap K = \emptyset$ for $j > i$. Thus, the preimage $f'^{-1}(K)$ in $S \times [0,\infty)$ meets $j \times [0,\infty)$ only for $j\le i$. But, the intersection $f'^{-1}(K) \cap \{1,2,\dots,i\} \times [0,\infty)$ is compact by the finite case.
\end{proof}

Here is a key lemma concerning just one singular ray that will help to deal with infinitely many rays.

\begin{lemma}\label{l:onesingularray}
Let $K$ be a given compact subset of a locally compact, sigma compact space $X$ and let $f$ and $f'$ be singular rays in $X$ whose images are disjoint from $K$. If $f$ and $f'$ are properly homotopic in $X$, then the proper homotopy can be (re)chosen to have image disjoint from $K$.
\end{lemma}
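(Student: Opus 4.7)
The plan is to produce a new proper homotopy $\tilde H$ from $f$ to $f'$ via a translate-and-splice construction that uses only the properness of $H$ and of $f,f'$. First I note that since $H$ is proper and $K$ compact, $A:=H^{-1}(K)$ is a compact subset of $[0,\infty)\times[0,1]$; in particular $A\subset[0,N]\times[0,1]$ for some $N$, and I fix an integer $n>N$.

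Next I would form three proper auxiliary homotopies on $[0,\infty)\times[0,1]$: the tautological ones
\[
H_A(t,s):=f(t+sn),\qquad H_C(t,s):=f'(t+(1-s)n),
\]
whose images lie inside $f([0,\infty))$ and $f'([0,\infty))$ respectively and therefore miss $K$ by hypothesis; together with the translate
\[
H_B(t,s):=H(t+n,s),
\]
whose image also misses $K$ because $H_B^{-1}(K)=\{(t,s):(t+n,s)\in A\}$ is forced to be empty once $n>N$ and $t\geq 0$. Concatenating $H_A$, $H_B$, $H_C$ over the subintervals $[0,\tfrac13]$, $[\tfrac13,\tfrac23]$, $[\tfrac23,1]$ of $s$ produces the desired $\tilde H$. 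Continuity at the splicing times follows from $H_A(t,1)=f(t+n)=H(t+n,0)=H_B(t,0)$ and the analogous identity at $s=\tfrac23$; the endpoints satisfy $\tilde H(\cdot,0)=f$ and $\tilde H(\cdot,1)=f'$; and properness of $\tilde H$ is immediate from the three-piece decomposition (each piece being proper thanks to properness of $f$, $H$, $f'$ respectively, with preimages confined to a compact strip in $t$).

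The only real insight is the translate trick itself: because $A$ is compact in the parameter strip, a horizontal shift by $n>N$ removes $A$ from the domain altogether, after which the shifted endpoints $f(\cdot+n)$ and $f'(\cdot+n)$ can be reconnected to $f$ and $f'$ by reparametrization homotopies that trivially stay inside each ray. I do not expect any serious obstacle; in particular the argument requires no hypothesis on the topology of $X-K$ and no manifold structure on $X$ beyond local compactness, so it yields the lemma in the full stated generality.
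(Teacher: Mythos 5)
Your proof is correct and follows essentially the same route as the paper: use properness of the given homotopy $H$ to confine $H^{-1}(K)$ to a compact parameter strip $[0,N]\times[0,1]$, then splice together a slide along $f$ (staying in $f([0,\infty))$, hence off $K$), the tail of $H$ beyond the strip, and a slide along $f'$. The only cosmetic difference is that you shift the parameter by $n>N$ where the paper precomposes with a retraction $[0,\infty)\to[d,\infty)$; the underlying ``shunt three $K$-avoiding proper homotopies'' idea is identical.
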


\begin{proof}
If $h_{t}: [0,\infty) \to X$, $0 \leq t \leq 1$, is a proper homotopy from $f=h_{0}$ to $f'=h_{1}$, then its properness assures that for some $d \geq 0$ the image $h_{t}( [d,\infty) )$ is disjoint from $K$ for all $t$. But, the singular ray $f$ is proper homotopic in the complement of $K$ to the singular ray $\widehat{f}$ that is $f|_{[0,\infty)}$ precomposed with the retraction $[0,\infty) \to [d,\infty)$. Similarly for $f'$. Shunting together these three proper homotopies, one obtains the asserted proper homotopy.
\end{proof}

\begin{lemma}\label{l:manysingularrays}
Let $f_{i}$ and $f'_{i}$, with $i$ varying in the finite or countably infinite discrete set $S$, be two indexed sets of singular rays in the connected, locally compact, sigma compact space $X$. Suppose that the two continuous maps $f$ and $f'$ from $S \times [0,\infty)$ to $X$ defined by the rules $(i,x) \mapsto f_{i}(x)$ and $(i,x) \mapsto f_{i}'(x)$ are both proper. Suppose also that $f_{i}$ is proper homotopic to $f'_{i}$ for all $i \in S$. Then, there exists a proper homotopy $h_{t}:S \times [0,\infty) \to X$, $0 \leq t \leq 1$, that deforms $h_{0}=f$ to $h_{1}=f'$.
\end{lemma}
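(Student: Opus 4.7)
The plan is to use Lemma~\ref{l:onesingularray} to doctor each individual proper homotopy $f_i \simeq f'_i$ so that, as $i \to \infty$, the homotopy escapes to infinity in $X$. Once this is arranged, juxtaposing the homotopies over the discrete index $S$ will yield a proper homotopy of $f$ to $f'$.

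If $S$ is finite there is nothing to do: choose any proper homotopy $H^i_t$ from $f_i$ to $f'_i$ for each $i\in S$ and set $h_t(i,x):=H^i_t(x)$; a finite disjoint union of proper maps into $X$ is proper. So assume $S=\mathbb{Z}_+$. Fix a compact exhaustion $\emptyset=K_0\Subset K_1\Subset K_2\Subset\cdots$ with $X=\bigcup_j K_j$. Because $f$ and $f'$ are both proper and $S$ is discrete, for each $j$ there exists $N_j\geq 0$ such that, for every $i>N_j$, the images $f_i([0,\infty))$ and $f'_i([0,\infty))$ are both disjoint from $K_j$. Define $j(i):=\max\{j\geq 0\mid N_j<i\}$, with the convention $j(i)=0$ if the set is empty; then $j(i)\to\infty$ as $i\to\infty$, and both $f_i$ and $f'_i$ miss $K_{j(i)}$.

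For each $i\in S$ with $j(i)\geq 1$, apply Lemma~\ref{l:onesingularray} with the compact set $K=K_{j(i)}$ to produce a proper homotopy $H^i_t:[0,\infty)\to X$ from $f_i$ to $f'_i$ whose total image avoids $K_{j(i)}$. For the (finitely many) remaining indices $i$ with $j(i)=0$, choose any proper homotopy $H^i_t$ from $f_i$ to $f'_i$, which exists by hypothesis. Now define
\[
h_t(i,x):=H^i_t(x),\qquad (i,x)\in S\times[0,\infty),\ t\in[0,1].
\]
Continuity of $h$ at each point is immediate because $S$ is discrete.

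It remains to verify that $h\colon S\times[0,\infty)\times[0,1]\to X$ is proper. Given a compact set $K\subset X$, pick $j$ with $K\subset K_j$. Since $j(i)\to\infty$, there are only finitely many indices $i$ with $j(i)<j$; for every other $i$, the image of $H^i$ misses $K_j$, hence misses $K$. Thus $h^{-1}(K)$ meets $\{i\}\times[0,\infty)\times[0,1]$ for only finitely many $i$. For each such $i$, properness of the homotopy $H^i_t$ (as a map $[0,\infty)\times[0,1]\to X$) ensures that the intersection is compact. A finite disjoint union of compacta is compact, so $h^{-1}(K)$ is compact, as required.

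The only conceptual obstacle is coordinating the individual proper homotopies so their cumulative image is locally finite in $X$; this is precisely what Lemma~\ref{l:onesingularray} provides, by letting us push the $i$-th homotopy outside the prescribed compactum $K_{j(i)}$. The rest is bookkeeping with the exhaustion $\{K_j\}$ and the properness of $f$, $f'$.
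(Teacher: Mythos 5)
Your proposal is correct and follows essentially the same route as the paper: fix a compact exhaustion, use properness of $f$ and $f'$ to see that the pair $(f_i,f'_i)$ eventually misses any given compactum as $i\to\infty$, invoke Lemma~\ref{l:onesingularray} to rechoose each individual proper homotopy off the corresponding compactum $K_{j(i)}$, and then check properness of the assembled homotopy slice by slice. The only cosmetic difference is your bookkeeping via the thresholds $N_j$ (the paper instead takes $J(i)$ to be the largest index with $f_i$ and $f'_i$ missing $K_{J(i)}$), and you write out the final properness verification that the paper delegates to the proof of Lemma~\ref{l:rayproper}.
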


\begin{proof}
We propose to define the wanted proper homotopy $h_{t}$ by choosing, for $i \in S$, \emph{suitable} proper homotopies $h_{i,t}$ from $f_{i}$ to $f'_{i}$ and then defining $h_{t}$ by setting $h_{t}(i,x) = h_{i,t}(x)$ for all $i \in S$, all $t \in [0,1]$, and all $x \in [0,\infty)$. The choices aim to assure that $h_{t}$ is a \emph{proper} homotopy -- which means that the rule $(t,i,x) \mapsto h_{t}(i,x)$ is proper as a map $[0,1] \times S \times [0,\infty) \to X$.\ss

If $S$ is finite, any choices will do. But, if $S$ is infinite, then bad choices abound. For example, $h_{t}$ is not proper if every homotopy $h_{i,t}(x)$ meets a certain compactum $K$.\ss

If $S$ is infinite, we now specify choices that do the trick. Without loss, assume $S=\mathbb{Z}_{+}$. Let $\emptyset = K_{1} \Subset K_{2} \Subset K_{3} \Subset \cdots$ be an infinite sequence of compacta with $X=\cup_{j}K_{j}$. For each $i \in S$, let $J(i)$ be the greatest positive integer such that the images of the singular rays $f_{i}$ and $f'_{i}$ are both disjoint from $K_{J(i)}$. Since $f$ and $f'$ are proper, $J(i)$ tends to infinity as $i$ tends to infinity. Use Lemma \ref{l:onesingularray} to choose the proper homotopy $h_{i,t}$ from $f_{i}$ to $f'_{i}$ to have image disjoint from $K_{J(i)}$. Then, the properness of the resulting $h_{t}$ is verified as in the proof of Lemma \ref{l:rayproper}.
\end{proof}

\begin{remark}
Lemmas~\ref{l:rayproper} to~\ref{l:manysingularrays} above hold good with $[0,\infty)$ replaced by its product with (varying) compacta.
\end{remark}

Define a \dfb{multiray\/} in the \textsc{cat} manifold $M^{m}$ to be a \textsc{cat} submanifold lying in $\text{Int}M$ each component of which is a ray. Combining the Stable Range Embedding Theorem (Theorem~\ref{t:stableembedding}) with Lemmas~\ref{l:rayproper} to~\ref{l:manysingularrays} concerning proper maps, we get

\begin{proposition}[Classifying Multirays via Proper Homotopy]\label{p:raysviahomotopy}
\hfill\break
Let $M^{m}$ be a connected noncompact \textsc{cat} manifold, and let $f_{i}$ be singular rays where $i$ ranges over a finite or countably infinite index set $S$. If $m\geq3$, then $f_{i}$ is properly homotopic to a \textsc{cat} embedding $g_{i}$ onto a ray, such that the rules $(i,x) \mapsto g_{i}(x)$ collectively define a (proper) \textsc{cat} embedding $g:S \times [0,\infty) \to M$ with image a multiray. Furthermore, if $m>3$ and $g'_{i}$ is an alternative choice of the ray embeddings $g_{i}$, resulting in the alternative \textsc{cat} embedding $g'$ onto a multiray, then there exists an ambient isotopy $h_{t}:M \to M$, $0 \leq t \leq 1$, such that $h_{0}=\textnormal{id}|_{M}$ and $h_{1}g=g'$. \qed
\end{proposition}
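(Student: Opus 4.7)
The plan is a two-step assembly: first we manufacture a single proper map from $S \times [0,\infty)$ to $M$ out of the individual singular rays $f_i$, and then we apply the Stable Range Embedding Theorem (Theorem~\ref{t:stableembedding}) to the $1$-manifold $N := S \times [0,\infty)$ viewed as a disjoint union of rays.

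For the existence statement, I would first invoke Lemma~\ref{l:rayproper} to replace each $f_i$ by a properly homotopic singular ray $f'_i$ in such a way that the collective rule $(i,x) \mapsto f'_i(x)$ defines a \emph{proper} continuous map $f' : N \to M$. Since $N$ is a $1$-dimensional \textsc{cat} manifold (possibly with infinitely many components) and $m \geq 3 = 2\cdot 1 + 1$, Theorem~\ref{t:stableembedding} applies to $f'$ and produces a proper \textsc{cat} embedding $g : N \to M$ with $g(N) \subset \Int M$ that is properly homotopic to $f'$. Writing $g_i := g|_{i \times [0,\infty)}$, each $g_i$ is then a ray, properly homotopic to $f'_i$, and hence also properly homotopic to $f_i$. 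By definition $g(N)$ is a multiray.

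For the uniqueness statement, suppose $m > 3$ and $g' : N \to M$ is another such embedding, with $g'_i$ properly homotopic to $g_i$ for each $i \in S$. Because $g$ and $g'$ are themselves proper maps of $N$ into $M$, Lemma~\ref{l:manysingularrays} applies (with $g$ and $g'$ in the roles of $f$ and $f'$) and yields a single proper homotopy $H : N \times [0,1] \to M$ from $g$ to $g'$. Since $m > 3$, we have $2\cdot 1 + 2 = 4 \leq m$, so the uniqueness clause of Theorem~\ref{t:stableembedding} provides an ambient \textsc{cat} isotopy $h_t$ of $M$ with $h_0 = \mathrm{id}|_M$ and $h_1 g = g'$, as required.

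The step that requires the most care is the passage, in the uniqueness half, from component-wise proper homotopies to a single proper homotopy of the whole map $N \to M$. Naively splicing the $h_{i,t}$ together can fail to be proper when $S$ is infinite — as pointed out in the proof of Lemma~\ref{l:manysingularrays}, every individual homotopy could drift through a common compactum $K$. This is precisely the technical point dispatched by Lemmas~\ref{l:onesingularray} and~\ref{l:manysingularrays}, and aside from citing them the remainder of the argument is a direct appeal to the Stable Range Embedding Theorem in dimension $n=1$.
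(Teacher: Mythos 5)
Your proposal is correct and is exactly the assembly the paper intends: the paper's entire proof is the one-line remark that the Proposition follows by combining the Stable Range Embedding Theorem (Theorem~\ref{t:stableembedding}) with Lemmas~\ref{l:rayproper}--\ref{l:manysingularrays}, and your write-up fills in those details in the intended way (Lemma~\ref{l:rayproper} plus the $2n+1\leq m$ clause for existence, Lemmas~\ref{l:onesingularray}--\ref{l:manysingularrays} plus the $2n+2\leq m$ clause for the ambient isotopy). You also correctly flag the only delicate point, namely that componentwise proper homotopies must be rechosen so that their union is proper, which is precisely what Lemma~\ref{l:manysingularrays} provides.
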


\section{Multiple Component Hyperplane Embeddings}\label{s:multiplehyperplanes}

In this section we investigate proper \textsc{cat} embeddings into $\mathbb{R}^{m}$ of a disjoint sum of at most countably many\footnote{Every closed subset of a separable metric space is separable.} disjoint hyperplanes, each isomorphic to $\mathbb{R}^{m-1}$.\ss

For \textsc{cat}=\textsc{top} we will, for the first time, make essential use of the Stable Homeomorphism Theorem (=~SHT) to show that every self homeomorphism of $\mathbb{R}^{k}$ is ambient isotopic to a linear one~\cite{kirby69,freedmanquinn}; this is equivalent to $\pi_{0}( \text{STop} (k) ) = 0$, where STop$(k)$ is the group of orientation preserving self homeomorphisms of $\mathbb{R}^{k}$ endowed with the compact open topology. Not to do so would lead to pointless hairsplitting.\ss

In these circumstances, we can and do revert to the unrefined versions of the definition for \textsc{top} of the \textnormal{CSI} operation and its related constructions. We use the following lemma.

\begin{lemma}
If $G$ and $G'$ are \textsc{top} gaskets and $f: {\partial}G \to {\partial}G'$ is a degree $+1$ \textsc{top} isomorphism of their boundaries, then $f$ extends to a degree $+1$ \textsc{top} isomorphism $F:G \to G'$.
\end{lemma}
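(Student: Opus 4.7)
The plan is to reduce the TOP lemma to the already-proved DIFF case (Corollary~\ref{c:gasketboundaryextension}) by using SHT to smooth $f$ one boundary component at a time. Appealing to the definition of TOP gasket, I may assume without loss of generality that $G$ and $G'$ are linear gaskets in $\mathbb{H}^m$, hence simultaneously DIFF manifolds whose boundary components $H_i$ and $H'_j$ carry canonical affine structures. Let $\sigma$ denote the index bijection with $f(H_i)=H'_{\sigma(i)}$ determined by $f$, and fix once and for all a degree $+1$ DIFF identification $\ell_i : H_i \to H'_{\sigma(i)}$ (for instance, the affine map induced by the ambient geometry of $\mathbb{R}^m$).

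For each $i$, the composite $\ell_i^{-1}\circ f|_{H_i}$ is an orientation preserving self-homeomorphism of $\mathbb{R}^{m-1}$. By SHT (equivalently, $\pi_0(\text{STop}(m-1))=0$, as highlighted at the start of this section) it is TOP ambient isotopic to the identity; transporting this isotopy through $\ell_i$ produces a TOP isotopy $\phi_i^t$ of $H'_{\sigma(i)}$ carrying $f|_{H_i}$ to $\ell_i$. I would then invoke Brown's TOP collaring theorem (which does not itself require SHT) to pick pairwise disjoint collar neighborhoods $C_i \cong H'_{\sigma(i)}\times [0,1)$ of the $H'_{\sigma(i)}$ in $G'$ and damp each $\phi_i^t$ through its collar, giving an ambient TOP isotopy $\Phi_i^t$ of $G'$ supported inside $C_i$. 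The main technical point is to verify that the simultaneous composite $\Phi^t$, defined as $\Phi_i^t$ on each $C_i$ and the identity elsewhere, is continuous when $\partial G'$ has infinitely many components; this follows because $\partial G'$, being a closed submanifold of the topological manifold-with-boundary $G'$, has locally finite components, so the $C_i$ may be chosen locally finite as well, and then any compact subset of $G'$ meets only finitely many of them.

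To conclude, once $\Phi^1$ has been applied to $G'$, the map $\tilde f := \Phi^1 \circ f$ is a DIFF degree $+1$ isomorphism $\partial G \to \partial G'$ (equal to $\ell_i$ on each $H_i$). The DIFF case of Corollary~\ref{c:gasketboundaryextension} then extends $\tilde f$ to a DIFF, and therefore TOP, degree $+1$ isomorphism $\tilde F : G \to G'$, and $F := (\Phi^1)^{-1}\circ \tilde F$ is the desired TOP extension of the original $f$. The hard part should be the local-finiteness bookkeeping needed to fuse countably many collar-supported isotopies into a single ambient one; everything else is a direct transcription of the DIFF argument together with the single invocation of SHT on each boundary component.
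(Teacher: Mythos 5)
Your overall route is the paper's own: reduce to linear gaskets, use the SHT to move $f$ by an isotopy to a \textsc{diff} degree $+1$ isomorphism of the boundaries, extend that by the \textsc{diff} case (Corollary~\ref{c:gasketboundaryextension}), and then absorb the boundary isotopy in collars so that the extension restricts to the original $f$. The paper compresses the last step into the phrase ``using closed collars of $\partial G$ and $\partial G'$,'' whereas you realize it as an ambient homeomorphism $\Phi^1$ of $G'$ supported in collar neighborhoods; these are the same idea, and your smoothing of $f$ one boundary component at a time via reference identifications $\ell_i$ is just a componentwise reading of the paper's ``isotop $f$ to a \textsc{diff} isomorphism.'' (Minor quibble: a boundary component of a linear gasket is a bounded open round $(m-1)$-disk, so there is no canonical ambient affine identification with another one; but all you need is \emph{some} degree $+1$ \textsc{diff} identification, which certainly exists.)

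The one step whose justification does not hold up is exactly the one you flag, and local finiteness is not the right cure. The danger is already present for a \emph{single} component: if $C_j=c_j(H'_j\times[0,1))$ is an arbitrary open collar and you damp $\phi_j^t$ by the collar parameter, the moving part of $\Phi_j$ lies in $c_j(H'_j\times[0,1/2])$, and this set need not be closed in $G'$. One can have $c_j(y_n,s_n)\to x\notin C_j$ with $s_n\le 1/2$ and $y_n$ leaving every compact subset of $H'_j$, in which case $\Phi_j(c_j(y_n,s_n))=c_j(\phi_j^{\beta(s_n)}(y_n),s_n)$ has no reason to tend to $x=\Phi_j(x)$, so the glued map fails to be continuous; choosing the family $\{C_i\}$ locally finite does nothing to prevent this. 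The standard repair --- and what the paper's ``closed collars'' is pointing at --- is to shrink the collar: take a Brown collar $c$ of all of $\partial G'$ and rescale it by a suitable positive continuous function on $\partial G'$ so that the half-collar $c'(\partial G'\times[0,1/2])$ is closed in $G'$ (possible since $\partial G'$ is closed in the metrizable $G'$). Then $G'$ is covered by the two open sets $c'(\partial G'\times[0,1))$ and $G'-c'(\partial G'\times[0,1/2])$, your formula and the identity agree on the overlap, and both $\Phi^1$ and its inverse are continuous; the countable index set causes no extra trouble because the components $H'_i$ are open in $\partial G'$, so the isotopies assemble continuously on $\partial G'\times[0,1]$. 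With that adjustment your argument is complete and agrees with the paper's.
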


\begin{noqedproof}
By definition of gasket (see Section~\ref{s:csi}), we may assume $G$ and $G'$ are linear gaskets. By the SHT, we can isotop $f$ to a \textsc{diff} isomorphism $f'$. This $f'$ extends to a degree $+1$ \textsc{diff} isomorphism \hbox{$F':G \to G'$} by the \textsc{diff} version of this lemma (Corollary~\ref{c:gasketboundaryextension} above). Using closed collars of ${\partial}G$ and ${\partial}G'$ we easily construct the asserted \textsc{top} isomorphism $G \to G'$. \qed
\end{noqedproof}

A \dfb{multiple hyperplane} is a properly embedded submanifold $N$ of $\mathbb{R}^{m}$ where $N$ is the disjoint union of components $N_{i} \cong \mathbb{R}^{m-1}$ for $i \in S$, and $S$ is a nonempty countable index set. We say that $G$, the closure of a component of $\mathbb{R}^{m} - N$ in $\mathbb{R}^{m}$, is \dfb{docile} if it is a gasket, and we say that $N$ itself is \dfb{docile} if the closure of every such component is docile.\ss

Given any multiple hyperplane $N$ in $\mathbb{R}^{m}$, we can construct a canonical simplicial tree $T$ as follows. The vertices $V$ of $T$ are the closures of the complementary components of $N$ in $\mathbb{R}^{m}$. An edge is a component $N_{i}$ of $N$ and it joins the two vertices $u,v \in V$ whose intersection is $N_{i}$. The tree $T$ is clearly well defined by the pair $(\mathbb{R}^{m},N)$ up to tree isomorphism; it is the nerve of the covering of $\mathbb{R}^{m}$ by the closures of the components of $\mathbb{R}^{m} - N$. Also, $T$ is at most countable, but it is not necessarily locally finite. If $m = 2$, then these trees are naturally planar as the edges at each vertex are cyclically ordered.\ss

Conversely, given such a tree $T$ (planar in case $m=2$), there is a natural recipe to construct a multiple hyperplane $N$ in $\mathbb{R}^{m}$ where the closure of each complementary component is a gasket as follows. For each vertex $v_{k} \in V$, pick a gasket $G_{k}$ with boundaries corresponding bijectively to the edges incident with $v_{k}$ in $T$. Gluing these gaskets together according to $T$ gives a composite gasket $TG$ with empty boundary.\ss

It was established in proving the associativity property of \textnormal{CSI} that there is a \textsc{cat} manifold isomorphism $TG \to \mathbb{H}^{m}$ sending each vertex gasket in $TG$ to a linear gasket in $\mathbb{H}^{m}$ and, hence, each edge hyperplane to a hyperbolic hyperplane in $\mathbb{H}^{m}$ (see Lemma~\ref{l:TGisgasket} above). Further, such an isomorphism is unique up to degree $+1$ \textsc{cat} isomorphism of $\mathbb{H}^{m}$.\ss

We now summarize these observations, where \textsc{cat} is \textsc{top}, \textsc{pl} or \textsc{diff}.

\begin{theorem}[Multiple Hyperplane Linearization Thm. (=~MHLT)]\label{docilethm}
For $m$ distinct from $3$, every \textsc{cat} multiple hyperplane embedding $N$ in $\mathbb{R}^m$ is docile. Hence, for $m>3$, such embeddings are naturally classified modulo ambient degree $+1$ \textsc{cat} automorphism by arbitrary countable simplicial trees modulo simplicial tree automorphisms. For $m=2$ (and only $m=2$) one must use planar trees and their planar tree automorphisms (where planar here means that, at each vertex, the edges are cyclicly ordered).
\end{theorem}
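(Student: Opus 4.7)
The plan is to split the theorem into two parts and handle them in sequence. First, I would establish \emph{docility} for all $m \neq 3$: every closure $G$ of a complementary component of $\mathbb{R}^m \setminus N$ is a \textsc{cat} gasket. Second, I would deduce from docility the tree classification for $m > 3$ (and the planar-tree refinement for $m = 2$) by assembling the individual gaskets along the canonical tree $T$ constructed immediately before the theorem statement. The assembly step is essentially mechanical once docility is in hand, relying on Lemma~\ref{l:TGisgasket} (tree-of-gaskets is a gasket) and Corollary~\ref{c:gasketboundaryextension} (gasket boundary extension).

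For docility, fix a complementary component closure $G$ with boundary components $\{N_j\}_{j \in J}$. When $|J|=1$, the Hyperplane Linearization Theorem (Theorem~\ref{maintheorem}) already gives $G \cong \mathbb{R}^m_+$, a linear gasket with one boundary. For $|J| > 1$, the idea is to combine \CSI/ with the multiray machinery. By Proposition~\ref{p:raysviahomotopy}, for $m > 3$ I can choose a multiray $L = \bigsqcup_j r_j$ in $G$, with one ray $r_j$ emanating from a point of $N_j$, realizing the canonical proper-homotopy class of singular rays in $G$. A \textsc{cat} regular neighborhood of $L \cup \bigcup_j N_j$ in $G$ then has the manifest structure of a linear gasket, and its complement in $G$ decomposes into flanges $\cong \mathbb{R}^m_+$ by reapplying Theorem~\ref{maintheorem} to each $N_j$. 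This handles \textsc{cat} $\in \{\textsc{pl}, \textsc{diff}\}$ with $m > 3$. The \textsc{top} case requires Greathouse's Slab Theorem to substitute for the regular-neighborhood step, and the $m = 2$ case is handled separately by the classical Schoenflies–K\'er\'ekjart\'o classification or by the $2$-MRT (Theorem~\ref{mrt}), as flagged in the introduction.

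For the classification in dimension $m > 3$, given two multiple hyperplanes $N, N' \subset \mathbb{R}^m$ inducing isomorphic abstract trees $T \cong T'$, I would build a \textsc{cat} ambient automorphism of $\mathbb{R}^m$ carrying $N$ to $N'$ inductively: pick a root vertex, use docility plus Corollary~\ref{c:gasketboundaryextension} to isomorph the root gasket to its counterpart, and traverse each further edge by extending the boundary isomorphism across the corresponding $N_j$ and then across the next gasket. For $m = 2$ the extension must \emph{also} respect the cyclic order of boundary components at each vertex (since this cyclic order is a $2$-dimensional ambient-isotopy invariant), which explains why planar trees appear in that dimension. The main obstacle I anticipate is ensuring the resulting infinite amalgamation is \emph{proper} when the tree is unbounded; this is the escape-to-infinity phenomenon encoded in condition $(\ast)$ within the proof of Lemma~\ref{l:TGisgasket}, and a parallel uniform-control argument is needed here. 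After docility is secured, this propriety issue is the only genuine technical hurdle remaining.
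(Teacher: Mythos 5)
Your global architecture (docility first, then the tree classification assembled via Lemma~\ref{l:TGisgasket} and Corollary~\ref{c:gasketboundaryextension}, with properness of the infinite amalgamation controlled as in condition $(\ast)$) is the paper's, and that second half is essentially fine. The genuine gap is in the docility step, which is the heart of the theorem. You put one ray $r_j$ inside $G$ emanating from each boundary component $N_j$ and claim that a regular neighborhood of $L\cup\bigcup_j N_j$ in $G$ ``has the manifest structure of a linear gasket'' with complement a disjoint union of flanges. Neither claim survives inspection: the sets $N_j\cup r_j$ are disjoint closed subsets of $G$, so a regular neighborhood of their union is a disjoint union, one component per $j$, of collar-like pieces (each collapses to $N_j$, hence is $\cong \mathbb{R}^{m-1}\times[0,1]$); for $|J|\ge 2$ it is disconnected and therefore not a gasket. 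Dually, the closed complement of this neighborhood in $G$ is connected when $m\ge 3$ (the frontier of each collar-with-finger is a connected $\mathbb{R}^{m-1}$), so it cannot decompose into several flanges; and Theorem~\ref{maintheorem} gives no information about it in any case --- the HLT describes the two closed complementary components of $N_j$ in $\mathbb{R}^m$, i.e.\ the half-spaces on the \emph{other} side of each $N_j$, not pieces carved out of $G$. Showing that the leftover piece of $G$ is standard is essentially the docility assertion itself, so as written the argument is circular at its key point.

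The missing mechanism --- and the way the paper actually proves docility --- is to exploit the outside of $G$: for each $N_i$ let $V_i$ be the closed component of $\mathbb{R}^m-\Int G$ with $\partial V_i=N_i$; by Theorem~\ref{maintheorem} each $V_i\cong\mathbb{R}^m_+$, hence (Proposition~\ref{propC} and its \textsc{diff} analogue) $V_i$ is a \textsc{cat} regular neighborhood of a proper ray $r_i\subset\Int V_i$, and $\bigsqcup_i r_i$ is a proper multiray in $\mathbb{R}^m$. For $m>3$ this multiray is ambiently unknotted (Theorem~\ref{t:stableembedding} together with Lemmas~\ref{l:simplestproperray} and~\ref{l:manysingularrays}, i.e.\ the content of Proposition~\ref{p:raysviahomotopy} applied in $\mathbb{R}^m\cong\mathbb{H}^m$, not in $G$), carrying it onto the radial multiray attached to a linear gasket $G'\subset\mathbb{H}^m$; ambient uniqueness of regular neighborhoods then carries every $V_i$ onto the corresponding complementary half-space of $G'$ simultaneously, and therefore carries $G$ onto $G'$. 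It is exactly this transfer of ambient control from rays (simultaneously unknottable when $m>3$) to the half-spaces $V_i$, and hence to $G$, that your construction lacks; inside $G$ there is no linear model to compare with until docility is known. For \textsc{top} the Slab Theorem is used in a specific way you would still need to supply: the cases $|S|=1,2$ are the HLT and the Slab Theorem themselves, and for $|S|>2$ one isotops each $N_i$ onto the smooth hyperplane $\partial T(r_i)$ bounding a \textsc{diff} regular neighborhood of a smooth ray in $\Int V_i$ (using the Slab Theorem to identify the region in between with $\mathbb{R}^{m-1}\times[0,1]$), and then quotes the \textsc{diff} case.
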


\begin{corollary}[Gasket Recognition Theorem (=~GRT)]\label{grt}
\hfill\break
Consider a \textsc{cat} $m$-manifold with nonempty boundary whose interior is isomorphic to $\R^m$, and for which every boundary component is isomorphic to $\R^{m-1}$.
Exclude the case $m=3$. Then $M$ is isomorphic to a linear gasket. \qed
\end{corollary}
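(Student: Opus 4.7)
The plan is to realize $M$ as the closure in $\Int\,M\cong\R^m$ of a complementary component of a suitable multiple hyperplane, and then invoke the MHLT (Theorem~\ref{docilethm}).

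Write $\Bd M=\bigsqcup_{i\in S}N_i$ with each $N_i\cong\R^{m-1}$ and $S$ at most countable. Using \textsc{cat} collar existence together with the local finiteness of the boundary components $N_i$ in $\Bd M$, I would choose disjoint open collars $c_i\colon N_i\times[0,1)\hookrightarrow M$ that form a locally finite family in $M$ (shrinking their thickness toward infinity along a proper exhaustion of $M$). Set $N'_i:=c_i(N_i\times\{1/2\})\subset\Int\,M$ and $N':=\bigsqcup_i N'_i$. Because each $N'_i$ is closed in $M$ and the family is locally finite in $M$, $N'$ is a \textsc{cat} multiple hyperplane properly embedded in $\Int\,M\cong\R^m$, with every component isomorphic to $\R^{m-1}$.

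Next I would apply the MHLT to $N'\subset\Int\,M$: since $m\neq 3$, the closure in $\Int\,M$ of every component of $(\Int\,M)-N'$ is a linear gasket. Decompose $(\Int\,M)-N'$: for each $i\in S$, there is an open inner half-collar component $V_i:=c_i(N_i\times(0,1/2))$; and there is a single ``main'' component $V_0$ containing the exterior part $M-\bigsqcup_i c_i(N_i\times[0,1))$ together with all outer open half-collars $c_i(N_i\times(1/2,1))$, joined through the collar interiors. Its closure in $\Int\,M$ is $\overline{V_0}=(\Int\,M)-\bigsqcup_i c_i(N_i\times(0,1/2))$. The map $\phi\colon M\to\overline{V_0}$ that is the identity outside the collars and sends $c_i(x,t)\mapsto c_i(x,1/2+t/2)$ inside them is a \textsc{cat} isomorphism carrying $\Bd M$ onto $N'$ by collar reparametrization. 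Since $\overline{V_0}$ is a gasket by the MHLT, so is $M$, establishing the corollary.

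The only delicate point is arranging the collar data so that $N'$ is genuinely proper in $\R^m\cong\Int\,M$: were $\{N'_i\}$ not locally finite in $\Int\,M$, the MHLT would not apply. This is a standard but necessary initial adjustment, and once it is in place the rest of the argument is essentially formal. The case $m=2$ is included since the MHLT for $m=2$ is established independently in the later parts of Section~\ref{s:multiplehyperplanes}.
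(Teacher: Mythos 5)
Your overall route is exactly the paper's: the paper's one-line proof observes that $\Int\,M$ is isomorphic to $M$ with an external open collar attached along $\Bd M$, so that $M$ sits inside a copy of $\R^m$ as the closure of a complementary component of a multiple hyperplane, and then the MHLT applies; your inward push of $\Bd M$ to the half-levels $N'$ is the mirror image of that construction, and your reduction, the identification of the complementary components, and the inclusion of $m=2$ are all in order. The properness/local-finiteness point you single out is indeed necessary and is handled by the shrinking you describe.

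The genuine gap is elsewhere, in the claimed \textsc{cat} isomorphism $\phi\colon M\to\overline{V_0}$. Under the hypotheses you actually impose (disjoint collars, local finiteness, each $N'_i$ closed), the map ``identity outside the collars, $c_i(x,t)\mapsto c_i(x,1/2+t/2)$ inside'' need not even be continuous at the frontier of the open collars, because an \emph{open} collar parametrization gives no control there. Concretely, take $M=\R^2_+$ and the \textsc{diff} collar $c(x,t)=(x+g(t),t)$ with $g(t)=t/(1-t)$; its image is the strip $\{0\le y<1\}$, and $N'=c(\R\times\{1/2\})=\R\times\{1/2\}$ is closed and proper, so all your conditions hold. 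For $0\le v<1$ one computes $\phi(u,v)=\bigl(u+1/(1-v),\,(1+v)/2\bigr)$, which diverges as $(u,v)\to(0,1)$, whereas $\phi(0,1)=(0,1)$; so $\phi$ is not a homeomorphism. (For \textsc{cat}=\textsc{diff} the two-piece formula is also not smooth across the frontier even for a straight collar.) The repair is standard: first shrink to \emph{closed} collars $C_i\colon N_i\times[0,1]\to M$ with image closed in $M$, and push only inside them by a reparametrization of $[0,1]$ that fixes the top level $C_i(N_i\times\{1\})$ (smoothed for \textsc{diff}); continuity at the frontier then follows from continuity of $C_i$ itself. Equivalently, you may simply invoke the fact the paper uses, that $\Int\,M$ is isomorphic to the result of attaching an external open collar to $M$ along $\Bd M$. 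With that one step repaired, your argument is complete and coincides with the paper's.
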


\begin{pfgrt}
$\Int \, M$ is always isomorphic to the manifold obtained by adding to $M$ an external open collar along $\partial{M}$. \qed
\end{pfgrt}

\begin{corollary}
With the same data as in the MHLT and assuming $m\geq4$, the pair $(\mathbb{R}^m,N)$ is \textsc{cat} isomorphic to a Cartesian product
\[
	(\mathbb{H}^2,N') \times \mathbb{R}^{m-2}
\]
where each component of $N'$ is a hyperbolic line. \qed
\end{corollary}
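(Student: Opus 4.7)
The strategy is to exhibit both pairs as carrying the same combinatorial invariant and then invoke the classification part of the MHLT.

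First, extract the invariant. By the MHLT (Theorem~\ref{docilethm}), $N$ is docile and has an associated countable simplicial tree $T$, namely the nerve of the covering of $\mathbb{R}^m$ by the closures of the complementary components of $N$. Since $m \geq 4$, the same theorem asserts that $(\mathbb{R}^m, N)$ is determined up to ambient degree $+1$ \textsc{cat} isomorphism by $T$ (as an unordered countable simplicial tree).

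Next, realize $T$ in the plane. Endow $T$ with an arbitrary planar structure, i.e.\ an arbitrary cyclic order on the edges at each vertex (always possible since valences are countable). Then adapt the inductive construction from the proof of Lemma~\ref{l:TGisgasket} to $m=2$: at each step, choose nested hyperbolic half-planes in $\mathbb{H}^2$ whose bounding hyperbolic lines realize the edges of $T$ in the prescribed cyclic order around each planar gasket vertex, and impose the shrinking conditions analogous to $(\ast), (\ast\ast), (\ast{\ast}\ast)$ so that the resulting embedding is proper. This produces a countable disjoint family $N' \subset \mathbb{H}^2$ of hyperbolic lines whose associated tree (with the chosen planar structure) is $T$. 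The only $2$-dimensional gasket-uniqueness input needed at each inductive step is the $m=2$ analog of Corollary~\ref{c:gasketboundaryextension}, available from classical Schoenflies.

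Now pass to products. Since $\mathbb{H}^2$ is \textsc{cat} isomorphic to $\mathbb{R}^2$, we have $\mathbb{H}^2 \times \mathbb{R}^{m-2} \cong \mathbb{R}^m$. Each component $L \times \mathbb{R}^{m-2}$ of $N' \times \mathbb{R}^{m-2}$ is \textsc{cat} isomorphic to $\mathbb{R}^{m-1}$ and \textsc{cat} locally flat, since local flatness is preserved under a product with a Euclidean factor. Hence $N' \times \mathbb{R}^{m-2}$ is a multiple hyperplane in $\mathbb{R}^m$ in the sense of Section~\ref{s:multiplehyperplanes}. Its complementary components are precisely the products $U \times \mathbb{R}^{m-2}$ with $U$ a complementary component of $N'$ in $\mathbb{H}^2$, and its adjacency relations match those of $N'$, so its associated tree is canonically identified with $T$ (the planar structure being forgotten when one remembers only the unordered tree).

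Finally, since $m \geq 4$ and both $(\mathbb{R}^m, N)$ and $(\mathbb{H}^2, N') \times \mathbb{R}^{m-2}$ realize the same unordered tree $T$, the classification part of Theorem~\ref{docilethm} provides a degree $+1$ \textsc{cat} isomorphism of pairs
\[
   (\mathbb{R}^m, N) \;\cong\; (\mathbb{H}^2, N') \times \mathbb{R}^{m-2}.
\]
The principal obstacle is the explicit $2$-dimensional realization in step two --- recasting the proof of Lemma~\ref{l:TGisgasket} for $m=2$ while tracking the cyclic order of hyperbolic lines at each vertex; once that is in hand, everything else is formal from the MHLT.
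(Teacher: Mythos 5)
Your proposal is correct and is essentially the argument the paper leaves implicit behind its \qed: realize the tree $T$ of $N$ by a proper family of disjoint hyperbolic lines $N'\subset\mathbb{H}^2$ (the $m=2$ adaptation of the recipe behind Lemma~\ref{l:TGisgasket}, where only existence, not $2$-dimensional uniqueness, is actually needed), observe that $N'\times\mathbb{R}^{m-2}$ is a multiple hyperplane in $\mathbb{H}^2\times\mathbb{R}^{m-2}\cong\mathbb{R}^m$ with the same tree, and invoke the $m>3$ classification clause of the MHLT. The only cosmetic remark is that the planar structure on $T$ and the appeal to a $2$-dimensional analog of Corollary~\ref{c:gasketboundaryextension} are dispensable, since the hyperbolic lines can be placed directly and the classification only sees the abstract tree.
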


\begin{pfofdocilethmmgt3pldiff}
Let $G$ be the closure of a component of $\mathbb{R}^{m}-N$ in $\mathbb{R}^{m}$. Reindex so that $N_{i}$, $i\in S$, are the boundary components of $G$. For each $N_{i}$, let $V_{i}$ denote the closed component of $\mathbb{R}^{m}-\text{Int}G$ with boundary $N_{i}$. Each $N_{i}$ is unknotted in $\mathbb{R}^{m}$ by the \textsc{cat} HLT (Theorem~\ref{maintheorem}). Therefore, for each $i\in S$ there is a \textsc{cat} proper ray $r_{i}\subset \text{Int} V_{i}$ so that $V_{i}$ is a \textsc{cat} regular neighborhood of $r_{i}$ in $\mathbb{R}^{m}$. As $N\subset\mathbb{R}^{m}$ is a proper submanifold, the union of the rays $r_{i}$ is a proper multiray in $\mathbb{R}^{m}$.\ss

Choose $G'\subset\mathbb{H}^{m}$ a linear gasket with boundary hyperplanes $N'_{i}$, $i\in S$. For each $N'_{i}$, let $V'_{i}$ denote the closed component of $\mathbb{H}^{m}-\text{Int}G'$ with boundary $N'_{i}$ and let $r'_{i}\subset\text{Int}V'_{i}$ be a radial ray. Plainly, $V'_{i}$ is a \textsc{cat} regular neighborhood of $r'_{i}$ for each $i\in S$ and the union of the rays $r'_{i}$ is a proper multiray in $\mathbb{H}^{m}$.\ss

Choose a \textsc{cat} isomorphism $\psi:\mathbb{R}^{m} \rightarrow \mathbb{H}^{m}$. \textsc{cat} proper multirays unknot in $\mathbb{H}^{m}$, $m>3$, by the basic \textsc{cat} Stable Range Embedding Theorem (Theorem~\ref{t:stableembedding}), proved by general position, and Lemmas~\ref{l:simplestproperray} and~\ref{l:manysingularrays}. Thus, there is an ambient isotopy of $\mathbb{H}^{m}$ carrying $\psi(r_{i})$ to $r'_{i}$ for all $i\in S$ simultaneously. So, we may as well assume $\psi(r_{i})=r'_{i}$ for $i\in S$. By \textsc{pl} and \textsc{diff} regular neighborhood ambient uniqueness (see Section~\ref{s:regnhbds}), we may further assume that $\psi(V_{i})=V'_{i}$ for all $i \in S$. Then, $\psi|_{G}:G \rightarrow G'$ is a \textsc{cat} isomorphism as desired. \qed
\end{pfofdocilethmmgt3pldiff}

\begin{pfofdocilethmmgt3top}
Again, let $G$ be the closure of a component of $\mathbb{R}^{m}-N$ in $\mathbb{R}^{m}$ and reindex so that $N_{i}$, $i\in S$, are the boundary components of $G$. We have three cases depending on the number $\left|S\right|$ of boundary components of $G$.

\begin{case1comp}
This is exactly Cantrell's \textsc{top} HLT (Theorem~\ref{maintheorem}). \qed
\end{case1comp}

\begin{case2comp}
This case is well known as the \textbf{Slab Theorem} and is a
worthy sequel by C.~Greathouse~\cite{greathouse2} 1964 to
Cantrell's \textsc{top} HLT (Theorem~\ref{maintheorem}), so we include a proof. Greathouse deduced it from results then recently
established, together with the following (for $m>3$) then unproved.

\begin{theorem}[Annulus Conjecture (=~AC$(m)$)]\label{annconj}
\hfill\break
If $S_1$ and $S_2$ are two disjoint locally flatly embedded
$(m-1)$-spheres in $S^m$ and $X$ is the closure of the component of
$S^m - (S_1 \cup S_2)$ with $\partial{X}=S_1 \sqcup S_2$, then
$X$ is homeomorphic to the standard annulus $S^{m-1}\times[0,1]$.
\end{theorem}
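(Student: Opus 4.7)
The plan is to reduce AC$(m)$ to the Stable Homeomorphism Theorem (=~SHT$(m)$), which has already been invoked at the start of this section. The reduction, due essentially to Kirby, has two short geometric steps followed by a single application of SHT.

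First, I would apply Brown's \textsc{top} Schoenflies theorem and collaring theorem~\cite{brown60,brown62} to each of the locally flat spheres $S_1$ and $S_2$. The closure of each complementary component of $S_i$ in $S^m$ is a \textsc{top} $m$-ball, and each $S_i$ admits a bicollar in $S^m$. Using the bicollar of $S_1$, I may assume after a preliminary homeomorphism of $S^m$ that $S_1$ is the standard equatorial sphere, so that $X$ lies in the closed northern hemisphere, which is a standard $m$-ball $B$ with $\partial B = S_1$. By Schoenflies applied to $S_2 \subset \Int B$, the sphere $S_2$ bounds an $m$-ball $B' \subset \Int B$, and $X = \overline{B - B'}$.

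Second, I would repackage $X$. After choosing homeomorphisms $\alpha\colon (B, S_1) \to (B^m, S^{m-1})$ and $\beta\colon (B', S_2) \to (B^m, S^{m-1})$ and using the bicollars to rescale $\beta(B')$ inside $\Int B^m$, the isomorphism type of the triad $(X; S_1, S_2)$ is determined, as an abstract manifold with boundary, by the homeomorphism $h := \alpha \circ \beta^{-1}|_{S^{m-1}} \colon S^{m-1} \to S^{m-1}$. Explicitly, $X$ is the standard annulus $S^{m-1} \times [0,1]$ if and only if $h$ extends to a self-homeomorphism of the $m$-ball $B^m$: one direction is immediate via the product structure, the other by coning $h$ against the inner sphere $\beta(S_2)$.

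Third, I would invoke SHT$(m)$. It asserts that every orientation preserving self-homeomorphism of $\mathbb{R}^m$ is isotopic to the identity; a standard consequence, obtained by conelike interpolation of such an isotopy, is that every self-homeomorphism of $S^{m-1}$ extends to a self-homeomorphism of $B^m$. Combined with the second step, this gives $X \cong S^{m-1} \times [0,1]$, which is AC$(m)$. The hard part of the entire argument is SHT$(m)$ itself: for $m \geq 5$ this is Kirby's torus trick together with the surgery-theoretic results of Kirby--Siebenmann~\cite{kirby69}, and for $m = 4$ it is Quinn's theorem built on Freedman's disk embedding technology~\cite{freedmanquinn}. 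Since the present exposition treats SHT as a black box, the remaining content of a proof of AC$(m)$ is just the elementary but careful geometric reduction sketched above.
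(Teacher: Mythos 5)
The paper itself does not prove AC$(m)$: it states the Annulus Conjecture as a known external input to the Slab Theorem, citing \cite{kirby69} for $m>4$ and \cite{freedmanquinn} for $m=4$ (see also~\cite{edwards}), and only mentions in a later remark, without proof, that SHC$(m)$ implies AC$(m)$. Your plan of reducing AC$(m)$ to the Stable Homeomorphism Theorem is therefore reasonable in spirit, but your reduction has a genuine gap at the second step. First, $h:=\alpha\circ\beta^{-1}|_{S^{m-1}}$ is not a self-map of $S^{m-1}$: $\alpha\circ\beta^{-1}$ carries $S^{m-1}$ onto $\alpha(S_2)$, some locally flat sphere in $\Int\,B^m$, and no ``rescaling by bicollars'' produces a canonical self-homeomorphism. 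More seriously, the assertion that the triad $(X;S_1,S_2)$ is determined by such an $h$ is precisely the point at issue: $\alpha$ and $\beta$ are two unrelated Schoenflies coordinates on $B$ and on $B'$, and they carry no information about how $B'$ sits inside $B$; the region $X=\overline{B-B'}$ \emph{is} the unknown. Saying that $X$ is a standard annulus twisted by a boundary gluing homeomorphism amounts to assuming that the pair $(B,B')$ is homeomorphic to a standard concentric pair, which is (essentially) the annulus statement you are trying to prove.

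A decisive symptom that the argument cannot be repaired as written is that your step-3 criterion --- $h$ extends to a self-homeomorphism of $B^m$ --- is satisfied by \emph{every} homeomorphism of $S^{m-1}$ via the Alexander coning trick, with no appeal to the SHT whatsoever. So if steps 1 and 2 were correct, you would have an elementary proof of AC$(m)$ in all dimensions from Brown's Schoenflies theorem \cite{brown60,brown62} alone; that is impossible, since AC$(4)$ remained open for decades after Brown and needs Freedman--Quinn, and since (as the paper's remarks note) AC$(n)$ for $n\le m$ together imply SHC$(m)$, so your argument would also ``prove'' the SHT for free. The genuine derivation of AC$(m)$ from SHC$(m)$ (due to Brown and Gluck) is of a different nature: one uses Schoenflies to build a homeomorphism of $S^m$ carrying $S_1$ to $S_2$ respecting sides, uses stability to factor it into finitely many homeomorphisms each fixing a nonempty open set pointwise, and then obtains the annulus by chaining intermediate spheres whose consecutive regions are annuli by construction; none of this is a coning argument. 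To fill this slot you should either reproduce that argument carefully or simply quote AC$(m)$ from \cite{kirby69,freedmanquinn,edwards}, as the paper does.
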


\noindent This Annulus Conjecture was later proved, along with the
SHT, in \cite{kirby69} 1969 for $m>4$, and in~\cite{freedmanquinn} 1990 for $m=4$ (see also~\cite{edwards}). The already proved results used in~\cite{greathouse2} included Cantrell's \textsc{top} HLT, that we have reproved (Theorem~\ref{maintheorem}), and the following, proved by Cantrell and Edwards~\cite{cantrelledwards} 1963.

\begin{lemma}[Arc Flattening Lemma]\label{arcflatlemma}
\hfill\break
If a compact arc $A$ topologically embedded in $S^m$, $m>3$,
is locally flat except possibly at one interior point
$P$, then $A$ is locally flat also at $P$.
\end{lemma}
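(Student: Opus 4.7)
The goal is to show the single potentially bad interior point $P$ is in fact locally flat. I would attack this by reducing to a ball-arc pair problem and then engulfing the bad point by a shrinking sequence of locally flat standardizing moves.

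First, I would set up the local model. Using Brown's Schoenflies theorem and the fact that $A$ is locally flat off $P$, choose a small locally flat bicollared $(m-1)$-sphere $\Sigma \subset S^m$ such that $P$ lies in the interior of the $m$-ball $B$ bounded by $\Sigma$ (this $B$ exists by Schoenflies), and such that $\Sigma$ meets $A$ transversely at exactly two locally flat points $p_1, p_2$ with $(\Sigma, \{p_1, p_2\})$ the standard pair. Such $\Sigma$ exists because $A$ is locally flat at all points within a small neighborhood of $P$ other than $P$ itself. It then suffices to show that $(B, A \cap B)$ is homeomorphic to the standard pair $(D^m, \text{diameter})$, since that exhibits a standard neighborhood of $P$ in $S^m$ as a pair.

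Second, I would recursively produce nested locally flat spheres around $P$. Because $A$ is locally flat at every point of $A \cap B$ except $P$, I can pick, for each $n \geq 1$, a locally flat bicollared sphere $\Sigma_n \subset \mathrm{int}\,B$ enclosing $P$, of diameter tending to $0$, meeting $A$ transversely and standardly in two locally flat points $p_1^n, p_2^n$. Let $B_n$ be the ball inside $B$ bounded by $\Sigma_n$ with $P \in \mathrm{int}\,B_n$, and let $W_n := B_{n-1} - \mathrm{int}\,B_n$ (with $B_0 := B$). Each $W_n$ is an $m$-dimensional cobordism between two locally flat spheres containing two locally flat spanning arcs $A \cap W_n$ (portions of $A$) that are entirely away from the bad point.

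Third — the main step — I would straighten $A$ on each shell. In each $W_n$, the pair consisting of two disjoint locally flat properly embedded spanning arcs with standard boundary data can be ambiently isotoped $\mathrm{rel}\,\partial W_n$ to the standard pair of two parallel straight arcs. This is a classical spanning-arc unknotting statement in a ball that holds because $m-1 \geq 3$ gives stable-range codimension (invoke general position / Homma's method in \textsc{top} as in Theorem~\ref{t:stableembedding} and its TOP extension used elsewhere in the paper). Compose these shell-by-shell isomorphisms into a homeomorphism $h \colon B - \{P\} \to D^m - \{0\}$ taking $A \cap B - \{P\}$ onto a straight diameter minus the origin, with $h$ constructed so that the $\Sigma_n$ are mapped to round spheres of radius $1/n$. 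Because the spheres $\Sigma_n$ shrink to $P$ in diameter and the target round spheres shrink to $0$, $h$ extends continuously by sending $P$ to $0$, producing a homeomorphism of pairs $(B, A \cap B) \to (D^m, \text{diameter})$.

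The main obstacle is the third step: arranging the shell-by-shell standardizations to match on common boundaries $\Sigma_n$ and to produce a continuous limit at $P$. This demands compatibility of the boundary identifications from successive shells (for which one should first fix the standard model on each $\Sigma_n$ by a preliminary isotopy using two-point locally flat uniqueness on the sphere) and quantitative control of the supports so that everything outside $B_n$ is fixed after stage $n$, yielding a telescoping composite that converges uniformly. The high-dimensional hypothesis $m > 3$ is used essentially here, since in dimension $3$ the spanning-arc unknotting on each shell can fail (Fox-Artin type obstructions), which is precisely why the lemma's hypothesis excludes $m = 3$.
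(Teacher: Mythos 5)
Your plan has a genuine gap at its very first constructive step, and it recurs at every later stage: you simply ``pick'' a small locally flat bicollared sphere about $P$ (and then a whole shrinking sequence $\Sigma_n$) meeting $A$ transversely in exactly two locally flat points. Nothing in the hypotheses licenses this. The arc is only known to be locally flat \emph{away} from $P$, and the possible wildness at $P$ is exactly what is at issue: a priori the arc may cross every small metric sphere about $P$ in a very complicated set (the radial distance to $P$ restricted to $A$ need not have finite level sets), and a sphere that encloses the possibly wild subarc through $P$ while touching $A$ only at two nice points is in effect a tame ball neighborhood of that subarc. Producing such spheres is essentially as hard as the lemma itself; the natural way to get them --- delete $P$, note that the two components of $A-P$ are locally flat \emph{proper} rays in $S^m-P\cong\mathbb{R}^m$, and unknot them by the uniqueness clause of the Stable Range Embedding Theorem (Theorem~\ref{t:stableembedding}, using $m\geq 4$) --- already proves the lemma outright, with no shells and no limiting process. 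A second, heavier problem is your third step: sending each shell $W_n$ onto a round annulus $\{1/(n{+}1)\le |x|\le 1/n\}$ presupposes that the region between two disjoint locally flat $(m-1)$-spheres is homeomorphic to $S^{m-1}\times[0,1]$, i.e.\ the Annulus Theorem (Theorem~\ref{annconj}). That is a far deeper input than anything the lemma needs, and the paper's development deliberately keeps this lemma independent of it (only the Slab Theorem uses AC$(m)$).

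For contrast, the paper's proof avoids both difficulties by splitting $A$ at $P$ into $A_1\cup A_2$: it encloses the open arc $\Int\,A_1$ in a locally flat ball $B$ (closure of a tapered normal tube, cf.~\cite{lacher67}) in which $A_1$ is unknotted, applies the \textsc{top} Schoenflies theorem to see that $B'=S^m-\Int\,B$ is a ball with $P\in\partial B'$, and then deletes $P$: in $B'-P\cong\mathbb{R}^m_+$ the half-open arc $A_2-P$ is a locally flat proper ray, which is unknotted by the proper (stable-range) uniqueness clause of Theorem~\ref{t:stableembedding}. Pushing the bad point off to infinity replaces your infinite composition of shell moves, and no convergence, boundary-matching, or annulus recognition is ever needed. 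If you want to salvage your outline, you must first supply a proof of the existence of the spheres $\Sigma_n$ (for instance via exactly this ray-unknotting argument), at which point the shell machinery becomes superfluous.
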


Assuming these tools for the moment, we now give:

\begin{slabproof}
We consider the sphere $S^m$
to be $\mathbb{R}^m \cup \infty$. Let $G_i$, $i=1,2$, be the components of $\mathbb{R}^{m} - \Int G$. By the \textsc{top} HLT (Theorem~\ref{maintheorem}), each $G_i \cong \mathbb{R}_{+}^{m}$. Hats will indicate the adjunction of the
point $\infty \in S^m$. Enlarge $\widehat{G}:=G\cup\infty$
by adding to it a closed collar $C_i$ of the
$(m-1)$-sphere $\partial{\widehat{G}_i}$ in $\widehat{G}_i$, for $i=1,2$.
Denote the result $X:= \widehat{G} \cup C_1\cup C_2$. This is a
\textsc{top} submanifold of $S^m$ with boundary two
$(m-1)$-spheres $S_1$ and $S_2$, where $S_i$, for  $i=1$ and $2$,  is the component of $\partial{C_i}$ disjoint from $\widehat{G}$ (see Figure~\ref{slabfig}).
\begin{figure}[h!]
	\centerline{\includegraphics{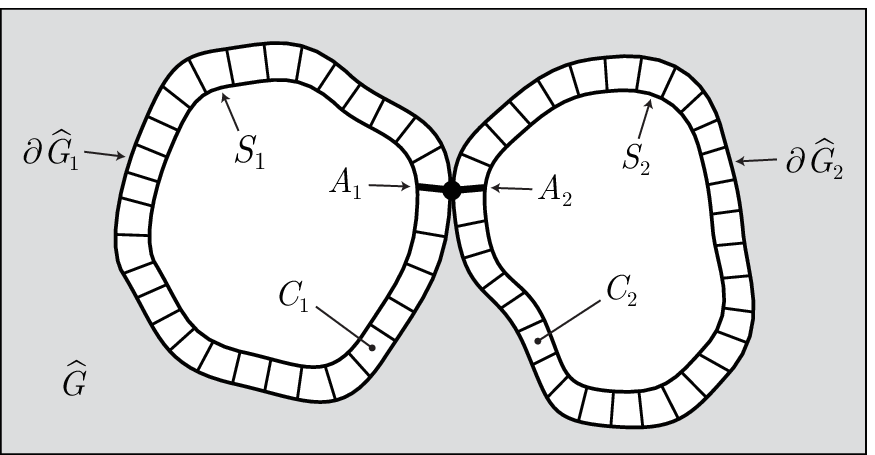}}
	\captionsetup{labelformat=ss,textfont={it,small},labelsep=newline,justification=centerlast,width=\textwidth}
	\caption{Almost global view of $\widehat{G}$ in $S^m = \mathbb{R}^m \cup \infty$, focused on the point
	$\infty = A_1 \cap A_2$.}
	\label{slabfig}
\end{figure}

The theorem AC$(m)$ (Theorem~\ref{annconj}) tells us that
$X \cong S^{m-1}\times[0,1]$. Furthermore, we have collaring
identifications $C_i = S_i\times[0,1]$. Consider the locally
flat arc $A_i$ that is the arc fiber of the collaring $C_i=S_i\times[0,1]$
that contains $\infty \in S^m$. Clearly
$A_1\cap A_2 = \infty$; thus $A := A_1\cup A_2$ is an arc
in $X$ that is locally flat except possibly at $\infty \in S^m$. By
the above Arc Flattening Lemma (Lemma~\ref{arcflatlemma}), $A$ is locally flat at
$\infty$; hence it is a locally flat $1$-submanifold of $X$
joining the two boundary components of $X$.  Note that
$G\cong X - A$ by Brown's collaring uniqueness theorem~\cite{brown62}.\ss

By the uniqueness clause of the elementary (but subtle!)
\textsc{top} version of the Stable Range Embedding Theorem (Theorem~\ref{t:stableembedding}), any two such arcs are
related by a \textsc{top} automorphism of $X\cong S^{m-1}\times[0,1]$.
Thus the complement $X - A$ is homeomorphic to $\mathbb{R}^{m-1}\times[0,1]$. \qed
\end{slabproof}

\begin{proofarc}
Split $A$ at $P$ to get
two compact arcs $A_1$ and $A_2$ with $A_1 \cap A_2 = P$.

\begin{assertion}\label{goodballnhbd}
There exists a compact locally flat $n$-ball
neighborhood $B$ of $\Int \, A_1$ such that $A_1$ is unknotted in $B$
and $B$ is disjoint from $\Int \, A_2$.
\end{assertion}

\begin{onpfofass}
In our one application of the Arc Flattening Lemma
above (namely to prove the Slab Theorem), $B$ can obviously be any tubular neighborhood
of $A_1$ in $C_1$ derived from the product structure 
$C_1 = S_i \times [0,1]$. Thus we leave
the full proof of this assertion to the interested reader with
just this hint: $B$ can in general be the closure in $S^{m}$
of a suitably tapered trivial normal tubular neighborhood
of $\Int A_1$ in $S^m$ (see~\cite{lacher67}). \qed
\end{onpfofass}

Now, by the \textsc{top}
Schoenflies theorem, $S^{m} - \Int B$ is also an $m$-ball $B'$
in $S^m$. In $B'$ the second arc $A_2$ is embedded in a
manner that is locally flat except possibly at $P\in\partial{B'}$. To
the non-compact \textsc{top} manifold $B' - P\cong\mathbb{R}_{+}^{m}$ we apply the
uniqueness clause of the Stable Range Embedding Theorem (Theorem~\ref{t:stableembedding});
we conclude, on recompactifying in $S^m$,
that the arc $A_2$ is unknotted in $B'$. It follows 
that the arc $A := A_1 \cup A_2$ is locally flat in
$S^m$. This completes our proof of the Arc Flattening Lemma (Lemma~\ref{arcflatlemma}). \qed
\end{proofarc}

Assuming AC$(m)$ (now known!), this completes the proof of the Slab Theorem which is the MHLT 
(Theorem~\ref{docilethm}) for the case when $G$ has two boundary
components. \qed
\end{case2comp}

\begin{remarks}
\Item(1)! Greathouse~\cite{greathouse1} 1964 also proved that the Slab
Theorem in dimension $m$ implies AC$(m)$, granting results
known in 1964 that we have mentioned. Hints: given 
an $m$-annulus $X$ in $S^m$,
form a locally flat arc $A \subset X$ joining the two
boundary \hbox{$(m-1)$-spheres}. Show that $A$ is \emph{cellular} (i.e., an intersection of compact $m$-cell neighborhoods in $S^m$) so
that the quotient space $(S^m /A)$ is homeomorphic to $S^m$, and apply the Slab Theorem to show
that $X - A\cong \mathbb{R}^{m-1}\times[0,1]$. Deduce that 
$X \cong S^{m-1}\times[0,1]$ with the help of collarings and
the Mazur-Brown Schoenflies theorem.
\medskip

\Item(2)! An easy argument shows that AC$(n)$ (Theorem~\ref{annconj}), $n=1,\ldots,m$, together 
imply the following.

\begin{theorem}[Stable Homeomorphism Conjecture (=~SHC$(m)$)]\label{shc}
\hfill\break
For any homeomorphism $h:\mathbb{R}^m \to \mathbb{R}^m$, there exists a 
homeomorphism $h':\mathbb{R}^m \to \mathbb{R}^m$ that coincides with $h$ near the 
origin and with the identity map outside a bounded set.
\end{theorem}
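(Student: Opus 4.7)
The plan is induction on $m$, the inductive hypothesis furnishing $\pi_0(\mathrm{STop}(m-1))=0$; equivalently, every orientation-preserving self-homeomorphism of $S^{m-1}$ (or of $\R^{m-1}$) is isotopic to the identity. That SHC$(m-1)$ yields this is an Alexander-type observation: if $k:\R^{m-1}\to\R^{m-1}$ is orientation-preserving, apply SHC$(m-1)$ to get $k'$ equal to $k$ near $0$ and the identity outside a bounded set; write $k=k'\circ(k'^{-1}k)$, in which each factor is the identity on a nonempty open set and is therefore isotopic to the identity via the Alexander isotopy that radially expands the set of fixity to fill $\R^{m-1}$. The base case $m=1$ is elementary: any orientation-preserving self-homeomorphism of $\R$ is monotone, and a piecewise linear splice yields the identity outside any chosen compact interval while preserving values near $0$.

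For the inductive step, compose $h$ with a reflection if necessary to arrange it orientation preserving, and with a translation to arrange $h(0)=0$. Choose $\rho>0$ small and $R>0$ large with $h(\overline{B}_\rho)\subset B_R$. The spheres $\Sigma_1:=h(\partial B_\rho)$ and $\Sigma_2:=\partial B_R$ are disjoint locally flat $(m-1)$-spheres in $\R^m\subset S^m$, so AC$(m)$ identifies the closed shell $\overline{B}_R\setminus \Int h(\overline{B}_\rho)$ with $S^{m-1}\times[0,1]$. The standard shell $\overline{B}_R\setminus \Int \overline{B}_\rho$ carries the obvious product structure. Define $h'$ piecewise by $h'=h$ on $\overline{B}_\rho$, $h'=\mathrm{id}$ on $\R^m\setminus B_R$, and on the standard shell an interpolating homeomorphism onto the target shell equal to $h|_{\partial B_\rho}$ on the inner boundary and $\mathrm{id}|_{\partial B_R}$ on the outer.

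The existence of this interpolation is the heart of the argument: through the two product identifications it reduces to constructing an isotopy in $\mathrm{Homeo}^{+}(S^{m-1})$ between two orientation-preserving self-homeomorphisms of $S^{m-1}$, and the inductive hypothesis supplies one. The main obstacle is that the two annulus structures on the shells are noncanonical (the target one coming only from AC$(m)$), yet the inductive hypothesis exactly absorbs this indeterminacy, since \emph{any} two orientation-preserving self-homeomorphisms of $S^{m-1}$ are isotopic, so any prescribed boundary data extends across the annulus. Gluing the three pieces of $h'$ along the bicollared frontiers $\partial B_\rho$ and $\partial B_R$ using collar uniqueness then yields the required homeomorphism.
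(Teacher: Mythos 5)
Your argument is the intended one: the paper gives no proof of SHC$(m)$, only the hint that AC$(n)$ for $n=1,\dots,m$ together with some Alexander isotopies yield it, and your induction --- AC$(m)$ to trivialize the shell between $h(\partial B_\rho)$ and $\partial B_R$, and SHC$(m-1)$ plus Alexander isotopies to get $\pi_0\,\mathrm{Homeo}^{+}(S^{m-1})=0$ and hence the interpolating annulus homeomorphism with prescribed boundary values --- is exactly that route, correctly carried out in its core step. The one thing to repair is your opening normalizations. The statement must be read with $h$ orientation preserving: no $h'$ can agree with an orientation-reversing $h$ near $0$ and with the identity outside a bounded set, since the local degree of a self-homeomorphism of $\mathbb{R}^m$ is constant; consequently ``compose with a reflection if necessary'' does not reduce the general case to the orientation-preserving one (from a compactly supported version of $r\circ h$ you cannot recover one for $h$). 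Similarly, composing with a translation to force $h(0)=0$ changes the map you are required to agree with near the origin, and it is in any case unnecessary --- your construction uses only that $h(\overline{B}_\rho)\subset B_R$, which holds for $R$ large regardless of where $h(0)$ lies. So simply delete both normalizations and state the orientation-preserving hypothesis. Two smaller points: the passage from $\pi_0\,\mathrm{Homeo}^{+}(\mathbb{R}^{m-1})=0$ to $\pi_0\,\mathrm{Homeo}^{+}(S^{m-1})=0$ deserves a line (fix a point, note that a homeomorphism of $S^{m-1}$ which is the identity on a nonempty open set becomes, after deleting a fixed interior point of that set, a homeomorphism of $\mathbb{R}^{m-1}$ that is the identity outside a bounded set, hence Alexander-isotopic to the identity); and the final gluing of the three pieces along $\partial B_\rho$ and $\partial B_R$ is just the agreement of continuous maps on closed sets together with properness --- no collar uniqueness is needed in \textsc{top}.
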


\noindent Hint: For this implication, you will need some 
Alexander isotopies. Exactly this form of the SHC$(m)$ was proved for $m \geq 5$ by R.~Kirby in~\cite{kirby69}.
\medskip

\Item(3)! An easy argument establishes the implication SHC$(m)$ $\Rightarrow$ AC$(m)$.
\end{remarks}
\bigskip

\begin{casegt2comp}
Let $G$ be the closure in $\mathbb{R}^m$ of a component of $\mathbb{R}^{m}-N$. Let $N_i$, $i\in S$, be an indexing of the components of $\partial{G}$. For each $N_{i}$, let $V_{i}$ denote the closed component of $\mathbb{R}^{m}-\text{Int}G$ with boundary $N_{i}$. By the \textsc{top} HLT (Theorem~\ref{maintheorem}), each $V_{i}$ is \textsc{top} isomorphic to closed upper half space $\mathbb{R}^{m}_{+}$. It is straightforward to produce, for each $i\in S$, a \textsc{diff} proper ray $r_{i}\subset\text{Int}V_{i}$. Let $T(r_{i})\subset\text{Int}V_{i}$ be a \textsc{diff} (closed) regular neighborhood of $r_{i}$. The boundary $H_{i}$ of $T(r_{i})$ is a \textsc{diff} hyperplane. By the Slab Theorem, the closure of the region between $H_{i}$ and $N_{i}$ is \textsc{top} isomorphic to $\mathbb{R}^{m-1}\times [0,1]$. This isomorphism yields an obvious isotopy of $N_{i}$ to $H_{i}$ for each $i\in S$. Using disjoint collars of the $H_{i}$ and $N_{i}$, these isotopies readily extend to an ambient isotopy of $\mathbb{R}^{m}$ which carries the collection $N_i$, $i\in S$, to the \textsc{diff} collection $H_i$, $i\in S$. The result now follows from the \textsc{diff} proof of the MHLT (Theorem~\ref{docilethm}) above. \qed
\end{casegt2comp}

This completes the proof of the MHLT (Theorem~\ref{docilethm}) for $m>3$ and \textsc{cat}=\textsc{top}. \qed
\end{pfofdocilethmmgt3top}

\begin{pfofdocilethmmeq2}
We begin with the following.\ss

\begin{observation}
By triangulation and smoothing
theorems for dimension $2$ that we refer collectively as
the $2$-Hauptvermutung (see \cite{moise} or~\cite{siebenmann05}),
it suffices to establish the MHLT (Theorem~\ref{docilethm}) for any one of the three
categories \textsc{cat} = \textsc{diff}, \textsc{pl}, or \textsc{top}.
\end{observation}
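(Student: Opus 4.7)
The plan is to use the $2$-Hauptvermutung as a two-way bridge between the categories \textsc{diff}, \textsc{pl}, and \textsc{top} in dimension $2$. Suppose MHLT is known for a category $\textsc{cat}'$, and let $N$ be a \textsc{cat} multiple hyperplane embedding in $\mathbb{R}^2$. The Hauptvermutung provides, for any $2$-manifold, a $\textsc{cat}'$-compatible structure unique up to ambient $\textsc{cat}'$ isotopy, and it further promotes any locally flat codimension $1$ \textsc{cat} submanifold to a $\textsc{cat}'$ submanifold within its ambient isotopy class; the references~\cite{moise} and~\cite{siebenmann05} collect the precise statements.

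First I would promote the pair $(\mathbb{R}^2, N)$ via the Hauptvermutung to a pair $(\mathbb{R}^2, N')$ in which $\mathbb{R}^2$ carries the standard $\textsc{cat}'$ structure and $N'$ is a $\textsc{cat}'$ multiple hyperplane, the two pairs being ambient \textsc{top} isomorphic. Applying MHLT in $\textsc{cat}'$ yields the docility of $N'$ and its classifying planar simplicial tree $T$. Transferring back through the Hauptvermutung recovers the analogous classification for $N$: the closures of the complementary components of $N$ in $\mathbb{R}^2$ are \textsc{cat} gaskets (because ``being a $2$-dimensional gasket'' is a topological invariant, so it carries across the Hauptvermutung), and two \textsc{cat} embeddings with the same $T$ are related by an ambient \textsc{cat} automorphism obtained by conjugating the $\textsc{cat}'$ automorphism supplied by MHLT with Hauptvermutung isotopies. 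Existence (every planar tree is realized) is analogously transferred by realizing $T$ in $\textsc{cat}'$ and promoting the realization back to \textsc{cat}.

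The main subtlety is verifying that the $2$-Hauptvermutung is applicable in the noncompact, nonrelative setting at hand, and for infinitely many components of $N$. This is standard but not wholly trivial; the relevant proper, noncompact versions are assembled in~\cite{moise,siebenmann05}, or can be deduced from the compact case by a routine skeletal induction over an exhausting compact filtration, analogous to the argument sketched in Essay~I, Appendix~C of~\cite{kirbysiebenmannbook}. Granting this, classification in any one category immediately yields classification in all three, with the same planar tree invariant.
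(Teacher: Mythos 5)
Your proposal is correct and matches the paper's intent: the paper simply asserts this Observation by citing the $2$-dimensional triangulation/smoothing theorems, and your explicit transfer argument (promote the pair via the Hauptvermutung, apply MHLT in the known category, carry the docility and the planar-tree classification back, with the noted care for the proper noncompact setting and countably many components) is exactly the routine argument those citations are meant to encapsulate.
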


We work in the smooth category. The \textsc{diff} proof of the MHLT (Theorem~\ref{docilethm}) already given for $m>3$ adapts easily to $m=2$ using the following.

\begin{theorem}[Multiray Radialization Theorem in $\mathbb{R}^2$ (=~$2$-MRT)]\label{mrt}
Let $L\subset\mathbb{R}^2$ be a smooth proper multiray. Then $L$ is ambient isotopic to a radial multiray.
\end{theorem}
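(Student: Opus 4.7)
The plan is to use the radial Morse function $\rho(x)=\|x\|^{2}$ on $\mathbb{R}^{2}$ and, by ambient isotopy of $\mathbb{R}^{2}$, eliminate every critical point of $\rho|_{L}$; once $\rho$ is monotone along each component ray, a straightforward gradient-flow isotopy produces a radial multiray. A preliminary small ambient isotopy places $L$ in Morse position: the origin lies in the unbounded component of $\mathbb{R}^{2}\setminus L$, each $\rho|_{r_{i}}$ is Morse with distinct critical values, no base point of a ray is a critical point, and $L$ is transverse to $\{\rho=c\}$ for all but a discrete set of $c$. On each ray the critical points are then isolated, and in parameter order they alternate between local maxima and local minima.

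The heart of the proof is a \emph{cancellation lemma}: given a local maximum $p$ at value $a$ on some ray $r_{i}$ and the next critical point $q$ along $r_{i}$ (necessarily a local minimum at some value $b<a$), there exists an ambient isotopy of $\mathbb{R}^{2}$, supported in an arbitrarily small neighborhood of the sub-arc $\alpha\subset r_{i}$ from $p$ to $q$, that pushes $\alpha$ outward across the level $\{\rho=a\}$ and removes both critical points from $\rho|_{L}$ without introducing new ones. The isotopy is standard once one produces a \emph{cancellation disk} $D\subset\mathbb{R}^{2}$ bounded by $\alpha$ together with a short outer arc $\beta$ from $p$ to $q$ lying just above the level $\{\rho=a\}$, with $\mathrm{Int}\,D\cap L=\emptyset$. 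Planarity of $\mathbb{R}^{2}$ together with the fact that the closed annulus $A=\{b-\varepsilon\le\rho\le a+\varepsilon\}$ contains only finitely many arcs of $L$ (by properness of $L$ and the local finiteness of critical points) makes the construction of $D$ a finite planar problem: among all candidate $\beta$ in $A\setminus L$ that together with $\alpha$ bound a disk inside $A$, an \emph{innermost} one in the sense of Jordan-curve decomposition yields a disk disjoint from $L$ in its interior. Producing this innermost disk is the main technical obstacle and is handled by a standard cut-and-paste argument inside the planar annulus $A$.

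To accommodate a possibly infinite set of critical points, I exhaust $\mathbb{R}^{2}$ by the compact sublevel sets $\{\rho\le n\}$ and perform cancellations in order of increasing $\rho$-value, each isotopy supported in a compact annular region. Only finitely many cancellations occur in $\{\rho\le n\}$, so the infinite composition converges to a proper smooth ambient isotopy of $\mathbb{R}^{2}$ after which the resulting multiray $L'$ has $\rho|_{L'}$ free of critical points on every ray. Each ray of $L'$ then meets every circle $\{\rho=c\}$ (for $c$ greater than its base-point's $\rho$-value) in a single transverse point, and the gradient-like flow of $\rho$ supplies a final ambient isotopy carrying $L'$ onto a radial multiray, completing the proof.
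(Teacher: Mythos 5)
Your overall strategy (Morse-perturb $\rho|_L$, ambiently cancel critical points in pairs, then radialize by a flow) is the paper's strategy, but your cancellation lemma does not work as stated. The arc you propose to move is $\alpha\subset r_i$ running from the local maximum $p$ (level $a$) to the adjacent local minimum $q$ (level $b<a$), pushed ``outward across $\{\rho=a\}$''. Since the isotopy fixes the germ of $L$ at the ends of $\alpha$, the incoming branch still increases to level $a$ at $p$ and the outgoing branch still increases away from level $b$ at $q$; any replacement arc from $p$ to $q$ that rises above level $a$ must come back down to level $b$, so it carries a new local maximum (at level $>a$) and $q$ remains a local minimum. Nothing is cancelled. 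The correct move modifies $L$ along an arc whose two endpoints lie at the \emph{same} level: in the paper one takes the higher of the two minima adjacent to $u_{\mathrm{max}}$, say $u_1$, the point $u_2$ on the other side of $u_{\mathrm{max}}$ with $f(u_2)=f(u_1)$, and pushes the hump from $u_2$ to $u_1$ \emph{inward}, just below the level $f(u_1)$, across the region $D$ bounded by that hump and the circular arc at radius $f(u_1)$; this genuinely removes the pair $u_{\mathrm{max}},u_1$ and creates nothing new.

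The second gap is the disjointness of the cancellation region from $L$. An innermost-disk argument cannot supply this: an innermost disk cobounded by $\alpha$ and some $\beta\subset A\setminus L$ can be a thin sliver parallel to $\alpha$, and isotoping across such a disk removes no critical points; the disk that effects the cancellation is \emph{forced} (hump plus level arc), and other strands of $L$ can perfectly well run through it (e.g.\ a ray nested under the hump), so its interior being clear of $L$ is not something you can arrange by cut-and-paste. This is exactly where the paper's two global normalizations do the work, and both are absent from your proposal: (i) first isotop $L$ so that each boundary point of a ray is the \emph{unique absolute minimum} of $f$ on that ray (your weaker ``base points are noncritical'' does not suffice, since a ray could begin inside $D$ and descend monotonically out of it); (ii) always cancel the local maximum of $f|_L$ of \emph{smallest value over all of} $L$. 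With these, a strand of $L$ in $\Int D$ would either exhibit a local maximum of value less than $f(u_{\mathrm{max}})$ or violate (i), a contradiction; and properness of $L$ then makes the levels $f(u_1)$ used in successive cancellations tend to infinity, which is what legitimizes the infinite composition of the $h_k$ (your ``only finitely many cancellations meet $\{\rho\le n\}$'' bookkeeping rests on the flawed cancellation and, in the corrected version, needs this properness argument rather than being automatic). So the proposal, as written, is not repairable without importing the paper's pairing (maximum with the \emph{higher} adjacent minimum), its min-max processing order, and the base-point normalization.
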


\begin{pfofmrt}
Translate so that $L$ misses the origin. Morse theory tells us that, by a small smooth perturbation of $L$ in $\mathbb{R}^2$,
we may assume that
\[
\xymatrix@R=0pt{
	\mathbb{R}^2	\ar[r]^-{f}		&		\mathbb{R}\\
	x				\ar@{|-{>}}[r]	&		\left|x\right|}
\]
restricts to a Morse function on $L$ with distinct critical values, cf.~\cite{milnor65}.\ss

Seen in a nutshell, our proof plan is to alter $L$ by ambient isotopy to eliminate critical points of $f|_L$ in pairs until $f|_L$ has no critical points at all. Then a further ambient isotopy will make $L$ radial; this latter isotopy will arise by integration of a vector field on $\mathbb{R}^2$ that is tangent to $L$ (as modified so far) and is transverse to the level spheres of $f$.

\begin{assertion}\label{henrya1}
By an ambient isotopy, we may assume that on each component $r$ of $L$ an absolute minimum of the restriction $f|_r$ is attained at the point $\partial{r}$ only, and this point is
noncritical for $f|_r$.
\end{assertion}

\begin{pfhenrya1}
For each component $r_i$ of $L$ for which $f(\partial{r_i})$ is not
the unique minimum point $m_i$ of $f$ on $r_i$, consider a small smooth
regular neighborhood $N_i$ of the interval $K_i$ in $r_i$ that joins $m_i$
to $\partial{r_i}$. These $N_i$ can be chosen so small that their union $N$ is
a disjoint sum of these $N_i$. Then independent smooth isotopies, each
with support in one $N_i$, together establish the assertion (cf.~\cite[pp.~22--24]{milnor97}). \qed
\end{pfhenrya1}

Now, perform a possibly infinite number of steps. For convenience, we consider the points in $\partial{L}$ to be critical points of $f|_L$ from here on.\ss

\begin{step1}
Let $u_{\mathrm{max}}$ be the local maximum of $f|_L$ on which $f$ assumes the minimum value. Let $r_i$ be the component of $L$ containing $u_{\mathrm{max}}$.
Let $u_0$ and $u_1$ be the critical points of $f|_L$
adjacent to $u_{\mathrm{max}}$ in $r_i$; the only critical points in the segment from $u_0$ to $u_1$ in $r_i$
are $u_0$, $u_{\mathrm{max}}$, and $u_1$.  
After switching $u_0$ and $u_1$ if needed, we may assume 
$f(u_1) > f(u_0)$.  
There is a unique
$u_2$ on the segment of $r_i$ from $u_0$ to $u_{\mathrm{max}}$ so that 
$f(u_2) = f(u_1)$.\ss

We will ambiently cancel the index $0$ and index $1$ critical points $u_1$ and $u_{\mathrm{max}}$.
Let $A$ denote the complement in $\mathbb{R}^2$ of the open disk of radius $f(u_1)$ centered at the origin.
Let $D$ be the compact region in $A$ bounded by the segment
of $r_i$ from $u_1$ to $u_2$ and an arc of the circle of radius
$f(u_1)$ between $u_1$ and $u_2$ (see Figure~\ref{cancel}).
\begin{figure}[h!]
	\centerline{\includegraphics{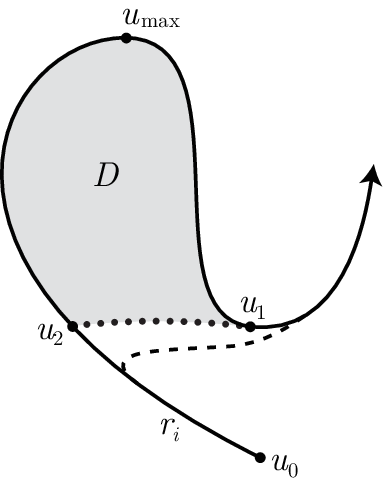}}
	\captionsetup{labelformat=ss,textfont={it,small},labelsep=newline,justification=centerlast,width=.88\textwidth}
	\caption{Cancelling pair of critical points $u_1$ and $u_{\mathrm{max}}$. The dotted arc indicates an arc of the circle of radius $f(u_1)$ between $u_1$ and $u_2$. The dashed line indicates the trajectory of $h_1(r_i)$ after cancellation.}
	\label{cancel}
\end{figure}

\begin{assertion}\label{henrya2}
For any neighborhood $U$ of $D$ there is a diffeomorphism $h_1$ of $\mathbb{R}^2$ so that:

\Item(a)! $f(h_1(x)) \le f(x)$ for all $x \in \mathbb{R}^2$.

\Item(b)! The support of $h_1$ lies in $U$.

\Item(c)! The support of $h_1$ does not intersect $L - r_i$ and also only intersects $r_i$ in a 
		small neighborhood of the segment of $r_i$ from $u_1$ to $u_2$.
		
\Item(d)! The critical points of the restriction of $f$ to $h_1(L)$ are the same as those of $f|_L$,
		except for $u_{\mathrm{max}}$ and $u_1$ which are no longer critical or even in $h_1(L)$.
\end{assertion}

\begin{pfhenrya2}
Note that the interior of $D$ does not intersect $L$,
since that would give a local maximum of $f|_L$ 
with value $< f(u_{\mathrm{max}})$, contrary to our choice of $u_{\mathrm{max}}$.
Consequently, we may assume $U$ does not intersect  $L-r_i$
and does not
intersect $r_i$ outside a small neighborhood of the segment of $r_i$ from $u_1$ to $u_2$.\ss

We get $h_1$ by integrating a suitable vector field $v$ on $\mathbb{R}^2$.
All we need to do is make sure that:
\smallskip

\noindent$\bullet$ $v(x)\cdot x\le 0$ for all $x \in \mathbb{R}^2$  (to get (a)).
 
\noindent$\bullet$ $v(x)\cdot x = -|x|$ for all $x$ in some neighborhood $U'$ of $D$.
 
\noindent$\bullet$ $v$ points into $D$ on the interior of the segment of $r_i$ from $u_1$ to $u_2$.

\noindent$\bullet$ $v$ points out of $D$ on the arc of the circle of radius $f(u_1)$ from $u_1$ to $u_2$.

\noindent$\bullet$ The support of $v$ is contained in $U$, does not
intersect $L-r_i$, and does not intersect $r_i$ outside of a 
small neighborhood of the segment of $r_i$ from $u_1$ to $u_2$.
\smallskip

It suffices to find such a $v$ locally, since then $v$ is obtained by piecing together with a partition of unity. Finding $v$ locally is easy. The vector field $v(x)=-x/|x|$ works on the interior of $D$, on the circle of radius $f(u_1)$ (except possibly at $u_2$), 
and near $u_{\mathrm{max}}$.
On the segment of $r_i$ from $u_1$ to $u_2$ (but not at $u_{\mathrm{max}}$ or $u_1$), one may take a 
tangent to $r_i$ plus a small inward pointing vector.\ss

Having obtained a vector field $v$ satisfying the above
conditions, we may construct $h_1$ by standard methods.
More precisely, suppose $g: [a,b]\to U'$ parameterizes
a slightly larger  segment of $r_i$ than the segment from
$u_2$ to $u_1$. Choose a smooth function $\alpha:[a,b] \to [0,\infty)$
with support in $(a,b)$ so that $fg-\alpha$ is very small
and has positive derivative everywhere; this is possible since
$f(a)$ is slightly less than $f(u_2)$ and $f(b)$ is slightly greater than
$f(u_1)=f(u_2)$. Let $\phi(x,t)$ be the flow associated to the
vector field $v$ and let $\gamma:\mathbb{R}\to \mathbb{R}$ be a smooth
function with compact support such that $\gamma(0)=1$.
Then we define $h_1$ to be the identity outside
$\phi(g((a,b))\times \mathbb{R})$ and we define
$h_1\phi(g(s),t) = \phi(g(s),t +  \alpha(s)\gamma(ct))$ for some appropriate $c>0$.  If \hbox{$c>0$} is chosen small enough, then
the mapping $(s,t)\mapsto (s,t+\alpha(s)\gamma(ct))$ is a diffeomorphism, so $h_1$ is a diffeomorphism.
Note that $h_1(L)$ is obtained from $L$ by replacing the
segment $g([a,b])$ with the segment $\{ \phi(g(s),\alpha(s)) \mid s\in [a,b]\}$.
Since $f\phi(g(s),\alpha(s)) = fg(s)-\alpha(s)$ which has nonzero 
derivative, the replacement
segment has no critical points of $f$. This completes the proof of Assertion~\ref{henrya2}. \qed
\end{pfhenrya2}

\end{step1}

Step~$1$ is complete. \qed

\begin{stepk}
Similar to Step~$1$, but we replace $L$ by $h_{k-1}h_{k-2}\cdots h_1(L)$ and we name the resulting diffeomorphism $h_k$.
To ensure well definition of the infinite composition (discussed directly below), we insist that:\ss

\smallskip

\Item(\ddag)! The support of $h_k$ lies outside the disk of radius $f(u_1)-1$.
\end{stepk}

Step~$k$ is complete. \qed

\medskip

Note that the infinite composition 
$h = \cdots h_k h_{k-1}\cdots h_2 h_1$ is a diffeomorphism of $\mathbb{R}^2$.
To see this, pick any $r>0$.
Using (\ddag), one can show that there is an $n$ so that the support of $h_k$ does not
intersect the disc of radius $r$ if $k>n$.
Then $h(x) = h_n h_{n-1}\cdots h_1(x)$ for all $x$
with $|x|<r$ since $|h_n h_{n-1}\cdots h_1(x)|\le |x| < r$.\ss

After replacing $L$ with $h(L)$ we may as well assume
that the restriction of $f$ to $L$ has no critical points except the 
obligatory $\partial{L}$.
It is now easy to straighten $L$ using a slightly modified
proof of the isotopy extension theorem.
After a homothety we may assume
   $L$ does not intersect the
disc of radius $1$ about the origin.
Let $v$ be a
  vector field on $\mathbb{R}^2-0$ tangent to $L$ and satisfying $v(x)\cdot 
\nabla f = 1$
for all $x$, and $v(x) = \nabla f  = x/|x|$ for $|x|<1$.
If $\phi(x,t)$ is the flow for this vector field,
then $f\phi(x,t) = f(x)+t$ since $d f\phi(x,t)/dt = v\cdot \nabla f = 
1$.
Consequently, any trajectory beginning from a
point $x \in \mathbb{R}^2 - 0$ at time $t = 0$ crosses
the radius $r$ circle at exactly
time $t = r - {\vert x \vert}$.
Tangency to $L$ guarantees that if $\phi(x,t)\in L$ then $\phi(x,s)\in 
L$ for all $s\ge t$, and in fact for all $s\ge t_0$
where $\phi(x,t_0)\in \partial{L}$.
In particular, if $r_i$ is any ray of $L$ with boundary $x_i$,
then $r_i$ is exactly the positive trajectory of $x_i$.
Define $g\colon \mathbb{R}^2 \to \mathbb{R}^2$ by $g(0)=0$ and
$g(x) = |x| \phi(  x, 1 - |x|)$ for $x\ne 0$,
i.e., flow to the circle of radius one and then
scale back to your original distance from the origin.
Note that
$\phi(x,t) = x(1+t/|x|)$ for $|x|<1$ and $t\le 1-|x|$ so we know
$g(x)=x$ for $|x|<1$ and thus $g$ is smooth at the origin.
Tangency of $v$ to $L$ guarantees that for any ray $r_i$
of $L$, then $\phi(x, 1-|x|)$ is the same for all $x\in r_i$
and hence $g(r_i)$ is radial.\ss

The proof of the $2$-MRT (Theorem~\ref{mrt}) is complete. \qed
\end{pfofmrt}

This completes the proof of the MHLT (Theorem~\ref{docilethm}) for $m=2$. \qed
\end{pfofdocilethmmeq2}

\begin{remarks}\label{2mrtremarks}
\Item(1)! The basic technique used in the above proof of the $2$-MRT (Theorem~\ref{mrt}) is to ambiently cancel a minimal height local maximum of $f|_L$ with an adjacent local minimum in a controlled fashion. This technique is noteworthy for its simplicity and its utility. The \textsc{diff} Schoenflies theorem was nowhere employed as only basic separation properties are needed to obtain the vector field. Indeed, this technique quickly yields proofs of both the \textsc{diff} Schoenflies theorem and the \textsc{diff} HLT (Theorem~\ref{t:ourunknottingtheorem}) for $m=2$, the latter without assuming any ray hypothesis, as we now describe.

\begin{pf2dschoen}
Let $K$ be a smooth circle in the plane.
Perturb $K$ so that $f(x)=|x|$ is Morse on $K$ with distinct critical values.
Let $m$ and $M$ be the absolute minimum and maximum points of $f$ on $K$.
Now, apply the above technique to the two segments of $K$ connecting $m$ and $M$. \qed
\end{pf2dschoen}

\begin{pfhlt}
Let $N$ be a smooth proper embedding of $\mathbb{R}^1$ in $\mathbb{R}^2$.
Since any smoothly embedded $1$-disk unknots in $\mathbb{R}^2$, we can arrange that $N$ coincides with the $x$-axis in an $\varepsilon$-neighborhood $N_\varepsilon$ of the origin.
Now, apply the above technique to the two rays $N-\Int\,N_\varepsilon$. \qed
\end{pfhlt}

\Item(2)! It is natural to consider the $n$-dimensional analog of the $2$-MRT (Theorem~\ref{mrt}), namely:

\begin{theorem}[Multiray Radialization Theorem (=~$n$-MRT)]\label{nmrt}
\hfill\break
Let $L\subset\mathbb{R}^n$ be a smooth proper multiray. If $n\neq3$, then $L$ is ambient isotopic to a radial multiray.
\end{theorem}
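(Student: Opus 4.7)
The plan is to dispose of the case $n \neq 3$ by separating off the already-handled low dimensions and reducing the substantive case $n \geq 4$ directly to the proper-homotopy classification packaged in Proposition~\ref{p:raysviahomotopy}. For $n=1$ the statement is trivial, since a smooth proper multiray in $\mathbb{R}$ has at most two components, each of the form $[a,\infty)$ or $(-\infty,-b]$ up to orientation; for $n=2$ the statement is exactly Theorem~\ref{mrt}. I focus on $n \geq 4$.

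For $n \geq 4$, I first build an explicit radial target. Index the components of $L$ by a finite or countable set $S$ and pick an injection $S \hookrightarrow S^{n-1}$, $i \mapsto d_i$, with pairwise distinct image; let $L' := \bigsqcup_{i \in S} r'_i$ where $r'_i := \{\,t d_i : t \geq 1\,\}$. This is a smooth proper radial multiray in $\mathbb{R}^n$ with the same index set $S$ as $L$.

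Next, I verify the proper-homotopy hypothesis for Proposition~\ref{p:raysviahomotopy}. The complement of the open unit ball in $\mathbb{R}^n$ is a closed connected collar neighborhood $S^{n-1} \times [1,\infty)$ of Alexandroff infinity via $(v,t) \mapsto tv$, so Lemma~\ref{l:simplestproperray} applies and any two proper maps $[0,\infty) \to \mathbb{R}^n$ are properly homotopic; in particular, for each $i \in S$ the inclusion $f_i$ of the $i$-th component of $L$ is properly homotopic to the inclusion $f'_i$ of $r'_i$. Lemma~\ref{l:rayproper} lets me modify the families $\{f_i\}$ and $\{f'_i\}$ within their proper homotopy classes so that the rules $(i,x) \mapsto f_i(x)$ and $(i,x) \mapsto f'_i(x)$ each define proper maps $S \times [0,\infty) \to \mathbb{R}^n$; Lemma~\ref{l:manysingularrays} then combines the component-wise proper homotopies into a single proper homotopy from one family to the other. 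Now Proposition~\ref{p:raysviahomotopy} applies: its second clause, which relies on the $2n+2 \leq m$ half of the Stable Range Embedding Theorem with $n=1$ and therefore genuinely requires $m > 3$, produces a \textsc{cat} ambient isotopy $h_t$ of $\mathbb{R}^n$ with $h_0 = \mathrm{id}$ and $h_1$ taking the multiray $L$ onto $L'$, which is radial as desired.

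The main obstacle is almost entirely bookkeeping: aligning the hypotheses of Lemmas~\ref{l:rayproper}, \ref{l:simplestproperray} and~\ref{l:manysingularrays} so that Proposition~\ref{p:raysviahomotopy} can be invoked with the actual inclusion of $L$ (not merely with some abstractly generated embedding) as one of the two ray systems being compared. The conceptual content — that all proper singular rays in $\mathbb{R}^n$ are properly homotopic, and that stable-range general position upgrades such a proper homotopy to an ambient isotopy once $n \geq 4$ — is entirely contained in the already-established lemmas and proposition. Finally, I note that the exclusion $n = 3$ is sharp: by Remarks~\ref{sec7remarks}(3) there exist smooth proper rays in $\mathbb{R}^3$ that are knotted in the Fox–Artin sense, and no such ray can be ambient isotopic to a linear radial ray, so the $n$-MRT fails in that dimension.
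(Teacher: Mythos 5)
Your overall route is exactly the paper's: for $n>3$ the paper proves the $n$-MRT by this same stable-range argument (Theorem~\ref{t:stableembedding} together with Lemmas~\ref{l:simplestproperray} and~\ref{l:manysingularrays}, i.e.\ the content of Proposition~\ref{p:raysviahomotopy}), referring to the third paragraph of the proof of the MHLT (Theorem~\ref{docilethm}) for $m>3$; the case $n=2$ is Theorem~\ref{mrt} and the failure at $n=3$ is Remarks~\ref{sec7remarks}(3), just as you say. (The paper also sketches an alternative Morse-theoretic proof for \textsc{diff} modeled on the $2$-MRT, which you do not use.)

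There is, however, a concrete flaw in your construction of the target. When $S$ is countably infinite, $L'=\bigcup_{i}\{t d_i : t\ge 1\}$ is \emph{not} a proper radial multiray: the compact annulus $\{x : 1\le |x|\le 2\}$ meets every component, so the parameterizing map $S\times[0,\infty)\to\mathbb{R}^n$ cannot be proper; equivalently, since the directions $d_i$ must accumulate somewhere on $S^{n-1}$, the union is either not closed in $\mathbb{R}^n$ or fails to be a locally flat submanifold near an accumulation direction. This matters because the uniqueness clause of Proposition~\ref{p:raysviahomotopy} requires $g'$ to be a proper \textsc{cat} embedding onto a multiray, and your proposed repair via Lemma~\ref{l:rayproper} does not help: that lemma precomposes with a retraction $[0,\infty)\to[a_i,\infty)$, producing maps constant on $[0,a_i]$, hence non-embeddings, so they cannot serve as $g'$. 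The fix is one line: let the $i$-th radial ray start at radius $a_i\to\infty$ (for instance $r_i'=\{t d_i : t\ge i\}$), so that each compact ball meets only finitely many components and $L'$ is genuinely a proper radial multiray; alternatively, transport the problem by an isomorphism $\mathbb{R}^n\to\mathbb{H}^n$ as the paper does, where radial rays with initial distances tending to $1$ are automatically proper. With that correction, your appeal to Lemma~\ref{l:simplestproperray} and then to Proposition~\ref{p:raysviahomotopy} for $n\ge 4$ goes through and coincides with the paper's proof.
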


\noindent Recall that the $n$-MRT is `false' in dimension $n=3$ because even one proper ray may knot in $\mathbb{R}^3$ (see Remarks~\ref{sec7remarks}). On the other hand, if $n>3$, then the $n$-MRT holds (any \textsc{cat}) by the argument in the third paragraph of the proof of the MHLT (Theorem~\ref{docilethm}) for $m>3$ and \textsc{cat}=\textsc{pl} or \textsc{cat}=\textsc{diff} given earlier in this section; for \textsc{cat}=\textsc{top}, this argument uses Homma's method.\ss

We mention that for \textsc{cat}=\textsc{diff} and $n>3$, one may prove the \hbox{$n$-MRT} via the basic technique used in the above proof of the $2$-MRT (Theorem~\ref{mrt}). Indeed, this approach works with $\mathbb{R}^n$ replaced by any smooth manifold $W$ that is collared at infinity.
By ray shortening one can assume without loss that $W = M\times [0,\infty)$.
We claim that $L$ may be straightened, i.e., there is an ambient isotopy of $W$ carrying each ray of $L$ to a ray
of the form $m\times [t,\infty)$.
Since $n > 3$, one can slightly perturb
$L$ so that its projection to $M$ is a one to one immersion. This
canonicly provides a Whitney $2$-disk $D$ for suppression of a pair $u_{\mathrm{max}}$
and $u_1$ of critical points, cf. Figure~\ref{cancel}; indeed $D$ is made up of vertical
segments (just two degenerate), and the vector field is vertical. One
then concludes as for Theorem~\ref{mrt}. We need not process the $u_{\mathrm{max}}$ in min max order but
we do need to ensure that $h_k$ does not increase the
$[0,\infty)$ coordinate, as this guarantees the infinite
composition $\cdots h_k\cdots h_1$ is a diffeomorphism. The interested reader may enjoy
seeing where this argument fails in ambient dimension $n = 3$; an infinite number of
trefoils tied in a ray reveals the problem (a single trefoil tied in a ray reveals the local problem). One cannot make the projection of $L$ to $M$
one to one and thus may no longer exclude $L$
from the interior of the Whitney disc.
\end{remarks}

\nss{Two Alternative Proofs of the MHLT for Dimension $2$}

We have seen in the proof of the MHLT (Theorem~\ref{docilethm}) in
this section that it suffices to give alternative proofs
that each closure $M^2$ in $\mathbb{R}^2$ of a complementary component
of a properly embedded family of lines in $\mathbb{R}^2$ is
isomorphic to a linear gasket.
Thus it suffices to give new proofs of the
Gasket Recognition Theorem~\ref{grt} for dimension $2$, that we
restate as

\begin{theorem}[$2$-Gasket Recognition Theorem (=~$2$-GRT)]\label{2grt}\hfill\break
Consider a \textsc{pl} $2$-manifold $M$ 
whose interior is isomorphic to $\mathbb{R}^2$, and of which every
boundary component is non-compact. 
Then $M$ is isomorphic to $\mathbb{R}^2$, or to a linear gasket.
\end{theorem}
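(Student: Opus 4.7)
The plan is to adapt the embedded-Morse cancellation technique of the $2$-MRT (Theorem~\ref{mrt}) directly to $M$. Fix a \textsc{pl} isomorphism $\psi\colon \mathbb{R}^{2} \to \Int M$ and, using a closed collar of $\partial M$ in $M$, extend the radial function $x \mapsto |\psi^{-1}(x)|$ to a proper \textsc{pl} map $f\colon M \to [0,\infty)$. After a small \textsc{pl} perturbation we may assume $f$ is \textsc{pl} Morse with interior critical values distinct, and $f|_{\partial M}$ is a proper \textsc{pl} Morse function with distinct critical values; moreover, each boundary component $B_{i}$ has a unique absolute minimum at a point $p_{i}$ that is non-critical for $f$ on $M$. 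Split each $B_{i}$ at $p_{i}$ into two \textsc{pl} rays $r_{i}^{\pm} \subset M$.

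Next proceed by an Eilenberg-Mazur-style infinite composition of ambient \textsc{pl} isotopies of $M$, each of which cancels a pair of critical points of $f|_{\partial M}$, mirroring Steps $1,2,\ldots$ of the proof of the $2$-MRT. At the $k$-th step choose the still-unprocessed local maximum $u_{\max}$ of $f|_{\partial M}$ of least height and its adjacent critical point $u_{1}$ with $f(u_{1}) < f(u_{\max})$; let $u_{2}$ be the other point of $\partial M$ at height $f(u_{1})$ on the arc from $u_{1}$ through $u_{\max}$. The region $D$ bounded by that arc and an arc of the level set $\{f = f(u_{1})\}$ plays the role of the Whitney disk, and a compactly supported gradient-like vector field tangent to $\partial M$ in a neighborhood of $D$ generates the desired cancellation isotopy; insisting that each successive $h_{k}$ have support outside a growing sublevel set of $f$ (the analogue of the paper's condition on supports of the $h_{k}$) ensures that the infinite composition is a \textsc{pl} self-homeomorphism of $M$. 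The (at most finitely many at a time) interior critical points of $f$ in $\Int M$ are simultaneously eliminated in pairs by the analogous internal Morse cancellation, equivalently, the $2$-MRT applied to the empty multiray in $\Int M \cong \mathbb{R}^{2}$. When the process terminates, $f$ has no critical points on $\partial M$ beyond the endpoints $p_{i}$, and a single absolute minimum in $\Int M$; integration of a \textsc{pl} gradient-like vector field tangent to $\partial M$ then yields, exactly as at the end of the $2$-MRT proof, a \textsc{pl} isomorphism of $M$ onto a standard linear gasket (or onto $\mathbb{R}^{2}$, in case $\partial M = \emptyset$).

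The main obstacle is to verify that at each cancellation step the Whitney disk $D$ really sits inside $M$, rather than in an external complementary region of $\partial M$. This is where the hypothesis $\Int M \cong \mathbb{R}^{2}$ enters crucially: the arc of $\partial M$ from $u_{1}$ to $u_{2}$ through $u_{\max}$ together with the chosen level arc bounds a topological disk in the planar surface $M$, and the min-max choice of $u_{\max}$ guarantees that the interior of that disk is disjoint from the rest of $\partial M$, hence lies entirely in $\Int M$. Once this planarity point is in hand, the argument of the $2$-MRT goes through with only cosmetic changes, delivering the $2$-GRT and, as announced in Section~\ref{s:multiplehyperplanes}, the attendant classification of contractible $2$-manifolds with non-compact boundary.
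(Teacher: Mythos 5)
There is a genuine gap at the very first step: the function $f$ you build does not exist. If $\psi\colon\mathbb{R}^2\to\Int M$ is an isomorphism, then $\left|\psi^{-1}(x)\right|\to\infty$ as $x$ approaches any point of $\partial M$ (a sequence converging to a boundary point eventually leaves every compact subset of $\Int \, M$, hence its $\psi$-preimage leaves every bounded subset of $\mathbb{R}^2$). So the radial function on $\Int \, M$ admits no continuous --- let alone proper \textsc{pl} --- extension to $M$: with respect to the radial exhaustion of $\Int \, M$, the boundary sits ``at infinity''. This is not a cosmetic slip, because every subsequent step of your plan uses the specific geometry of the honest radial function: the round circular level sets, the compact region $D$ cobounded by a boundary arc and a level arc with interior in $\Int \, M$, the explicit vector fields such as $v(x)=-x/\left|x\right|$, and the final flow-and-rescale straightening. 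If you replace $f$ by some ad hoc proper function on $M$ (which does exist), none of these features is available: the level set through $u_1$ need not supply an arc to $u_2$ cobounding a compact disk missing the rest of $\partial M$, and the closing straightening has no analogue.

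The correct repair goes in the opposite direction and is exactly the paper's route: attach an \emph{external} open collar, so that $M\cup(\partial M\times[0,1))\cong\Int M\cong\mathbb{R}^2$; then $\partial M$ becomes a properly embedded family of disjoint lines in $\mathbb{R}^2$, $M$ is the closure of one complementary component, and one may use the genuine radial function of this ambient plane. This is how the paper deduces the Gasket Recognition Theorem (Corollary~\ref{grt}) from the $m=2$ case of the MHLT (Theorem~\ref{docilethm}), which in turn rests on the $2$-HLT (each adjacent half-plane is a regular neighborhood of a proper ray) plus the $2$-MRT (Theorem~\ref{mrt}) to radialize the resulting multiray. Even after this repair, your variant --- cancelling critical points of $f|_{\partial M}$ along whole lines --- has two further loose ends: at each surviving minimum $p_i$ the line is tangent to the level circle, so no vector field tangent to $\partial M$ can satisfy $v\cdot\nabla f=1$ there, and the straightening is therefore not ``exactly as at the end of the $2$-MRT proof''; and the disposal of interior critical points via ``the $2$-MRT applied to the empty multiray'' is vacuous (once you work with the true radial function of the ambient plane the only interior critical point is the origin, and the issue disappears). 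These points are fixable, e.g.\ by first normalizing each line near $p_i$ as in the paper's Morse-theoretic proof of the $2$-HLT and then radializing the resulting rays rel those arcs, but as written the proposal is not yet a proof, and once repaired it essentially reproduces the paper's $2$-MRT-based argument rather than giving an independent one.
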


Note that the converse of Theorem~\ref{2grt} is trivial.\ss

\medskip

We pause to offer a broader understanding of this result.
We accept as known the following analog for dimension $2$ of the Poincar\'e conjecture:

\begin{classicalfact}[=~$2$-PC]\label{2pc}
Every compact $2$-manifold $N^2$ having $\mathrm{H}_1(N;\,\mathbb{Z}/2\mathbb{Z})=0$ is isomorphic to the sphere $S^2$ or to the disk $\mathbb{B}^2$.
\end{classicalfact}

This $2$-PC is part of almost any classification of compact \textsc{pl} (or \textsc{diff})
surfaces; see for example Section~9 of~\cite{hirsch}. \qed\ss

Aiming to analyse the hypotheses of Theorem~\ref{2grt} (=~$2$-GRT), we prove:

\begin{proposition}\label{equivcond}
Consider a connected non-compact $2$-manifold $M^2$.
The following conditions are equivalent:\smallskip

\Item(a)! $\Int \, M \cong \mathbb{R}^2$.

\Item(b)! $M$ is irreducible; in other words every circle \textsc{pl} embedded in $M$ is the boundary of a \textsc{pl} $2$-disk embedded in $M$.

\Item(c)! $M$ is contractible.

\Item(d)! $\mathrm{H}_1(M;\,\mathbb{Z}/2\mathbb{Z})=0$.

\end{proposition}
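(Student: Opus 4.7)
The plan is to establish the cycle $(a)\Rightarrow(b)\Rightarrow(d)\Rightarrow(a)$, supplemented by the short path $(a)\Rightarrow(c)\Rightarrow(d)$, from which all four equivalences follow. The implication $(a)\Rightarrow(c)$ is immediate from the \textsc{pl} collar theorem: a collaring of $\partial M$ in $M$ provides a strong deformation retraction of $M$ onto a copy of $\Int M$, and since $\Int M\cong\mathbb{R}^2$ is contractible, so is $M$. The implication $(c)\Rightarrow(d)$ is trivial.

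For $(a)\Rightarrow(b)$: given a \textsc{pl} embedded circle $C\subset M$, I would push $C$ slightly off $\partial M$ (if necessary) through the collar to obtain a \textsc{pl} circle $C'\subset\Int M\cong\mathbb{R}^2$; the classical \textsc{pl} Schoenflies theorem in the plane then bounds $C'$ by a \textsc{pl} disk $D'\subset\Int M$, and adjoining the short collar annulus between $C$ and $C'$ to $D'$ produces a \textsc{pl} disk in $M$ bounded by $C$. For $(b)\Rightarrow(d)$: I use the standard surface-theoretic fact that every class in $H_1(M;\mathbb{Z}/2)$ of a \textsc{pl} surface is representable by a finite disjoint union of \textsc{pl} embedded simple closed curves --- any mod~$2$ simplicial $1$-cycle is an Eulerian subgraph of the $1$-skeleton, which after edge-disjoint Eulerian decomposition, local vertex resolution, and a small generic perturbation becomes such a disjoint union. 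Each embedded circle bounds a \textsc{pl} disk by $(b)$ and so represents $0$, and therefore the original class vanishes.

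The principal obstacle is $(d)\Rightarrow(a)$, which I would deduce from the classical classification of non-compact surfaces (K\'er\'ekjart\'o--Richards). Via the collar, $H_1(\Int M;\mathbb{Z}/2) = H_1(M;\mathbb{Z}/2) = 0$, and $\Int M$ is a connected boundaryless noncompact $2$-manifold. It must be orientable, since a non-orientable surface contains an embedded M\"obius band whose core is an orientation-reversing loop detected by the first Stiefel--Whitney class $w_1\in H^1(\Int M;\mathbb{Z}/2)$, producing a nonzero mod~$2$ homology class. It must also have genus $0$, since any handle would contain a non-separating embedded simple closed curve, which necessarily carries a nonzero mod~$2$ class. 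By the classification, an orientable genus-$0$ open surface is homeomorphic to $S^2\setminus E$ for some compact, totally disconnected subset $E\subset S^2$ (its end space). Alexander duality gives $H_1(S^2\setminus E;\mathbb{Z}/2)\cong\widetilde{H}^0(E;\mathbb{Z}/2)$, which vanishes precisely when $E$ is connected; and the only connected nonempty totally disconnected compactum is a single point ($E$ is nonempty because $\Int M$ is noncompact). Hence $\Int M\cong S^2\setminus\{\mathrm{pt}\} = \mathbb{R}^2$, completing $(d)\Rightarrow(a)$.
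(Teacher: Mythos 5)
Your proof is correct, but it takes a genuinely different route from the paper at the two substantive steps. The paper reduces at once to $\partial M=\emptyset$ (all four conditions are invariant under deleting or adding boundary), gets (a)$\Rightarrow$(b) from the \textsc{pl} Schoenflies theorem and (b)$\Rightarrow$(a) by citing the Irreducible \textsc{pl} Surface Classification Theorem (PLCT) of~\cite{siebenmann05}, and then spends its effort on (d)$\Rightarrow$(b): an embedded circle is bicollared (else the core of a M\"obius band has mod~$2$ self-intersection $1$), it cannot be non-separating and cannot separate into two noncompact pieces (both excluded by mod~$2$ intersection numbers with a dual curve or a properly embedded arc), so it cuts off a compact piece $M_1$; a capping-plus-Poincar\'e-duality argument gives $\mathrm{H}_1(M_1;\mathbb{Z}/2\mathbb{Z})=0$, and the classification of \emph{compact} surfaces ($2$-PC) makes $M_1$ a disk. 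You instead close the cycle through (b)$\Rightarrow$(d), via the standard fact that mod~$2$ classes are represented by disjoint embedded circles, and (d)$\Rightarrow$(a), via the K\'er\'ekjart\'o--Richards classification of noncompact surfaces together with Alexander duality ($\mathrm{H}_1(S^2-E;\mathbb{Z}/2\mathbb{Z})\cong\widetilde{\mathrm{H}}^0_{\check{C}ech}(E;\mathbb{Z}/2\mathbb{Z})$, forcing the end set $E$ to be a point). Both arguments are sound; what the paper's route buys is elementarity and self-containment in the hard step --- only mod~$2$ intersection theory and the classification of compact surfaces are needed, which fits the paper's bootstrapping aims (the noncompact classification appears there only later, as one of several alternative proofs of the $2$-GRT) --- while your route buys a direct proof of (d)$\Rightarrow$(a) that bypasses PLCT, at the cost of the full noncompact classification and \v{C}ech-cohomological Alexander duality. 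Two cosmetic points: in (a)$\Rightarrow$(b) the disk $D'$ need not be disjoint from your collar annulus, so either push it off by general position or, more cleanly, apply the inverse of the ambient collar-push carrying $C$ to $C'$ to the disk $D'$; and in (a)$\Rightarrow$(c) the retract is a copy of $M$ lying in $\Int\,M$ rather than ``a copy of $\Int\,M$'' --- what you actually use is that $\Int\,M\hookrightarrow M$ is a homotopy equivalence, which is all that is needed.
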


\begin{pfofequivcond}
Note that all four conditions are 
invariant under deletion (or addition) of boundary. 
Thus, without loss we can and do assume for the proof
that $\partial{M} = \emptyset$, i.e. $M$ is `open'.\ss

We can and do choose to work in the \textsc{pl} category.\ss 

By the \textsc{pl} Schoenflies theorem, (a) implies (b).
Trivially, (a) implies (c). By (PLCT) in Section~7 of~\cite{siebenmann05}, (b) implies (a).
By the homotopy axiom for
homology, (c) implies (d). To conclude, we prove that (d) implies (b).\ss

Consider any circle $C$ that is \textsc{pl} embedded in $M$. This
$C$ is bicollared, for otherwise its regular neighborhood is
a M\"obius band, which shows that $C$ has self-intersection
number $1$, and hence the class of $C$ is non-zero in
$\mathrm{H}_1(M;\, \mathbb{Z}/2\mathbb{Z})=0$, a contradiction.\ss

Continuing the proof that (d) implies (b), we examine several cases.\ss

Case~1. $C$ does not separate $M$.\hfill\break
Then there exists another embedded curve $C'$ in $M$ that
intersects $C$ in a single point and transversally. Thus $C$ and
$C'$ have mod $2$ intersection number $1$ in $M$. This shows that
the homology classes of $C$ and $C'$ in $\mathrm{H}_1(M;\,\mathbb{Z}/2\mathbb{Z})$ are both
nonzero, which contradicts (d). Thus, Case~1 cannot occur.\ss

Complementary Case~2. $C$ separates $M$.\hfill\break
Then, as $C$ is bicollared, it necessarily cuts $M$ into two connected pieces $M_1$
and $M_2$ each with boundary a copy of $C$. We now treat two subcases of Case~2
separately.\ss

Subcase~(i). Neither piece $M_i$ is compact.\hfill\break
Seeking a contradiction, suppose this subcase occurs.
There then exists a properly embedded path $C'$ in $M$ that
intersects $C$ in a single point and transversally.
There is thus a nonzero mod 2 intersection number of
$C$ with $C'$ proving that the class of $C$ in $\mathrm{H}_1(M;\,\mathbb{Z}/2\mathbb{Z})$
is nonzero, a contradiction. Thus this subcase cannot occur. We conclude that the following must always occur.\ss

Complementary Subcase~(ii). One piece, say $M_1$, is compact.\hfill\break
Then we claim that $\mathrm{H}_{1}(M_1;\,\mathbb{Z}/2\mathbb{Z})=0$. To prove this claim, suppose the contrary. Capping $M_1$ with a $2$-disk $B$
yields a \textsc{pl} closed $2$-manifold $N_1$ with
\[
	\mathrm{H}_1(N_1;\,\mathbb{Z}/2\mathbb{Z}) \cong\mathrm{H}_1(M_1;\,\mathbb{Z}/2\mathbb{Z}) \neq 0.
\]
In $\mathrm{H}_1(N_1;\,\mathbb{Z}/2\mathbb{Z})$, Poincar\'e duality provides a pair of compact curves $C_1$ and $C'_1$ (disjoint from $B$ by
general position) having non-zero intersection
number mod $2$. They lie in both $M$ and $M_1$ and have the same non-zero intersection number in $M$ as in $M_1$, contradicting $\mathrm{H}_1(M;\,\mathbb{Z}/2\mathbb{Z})=0$. This proves the claim.\ss

Next, since \hbox{$\mathrm{H}_{1}(M_1;\,\mathbb{Z}/2\mathbb{Z})=0$}, the classical $2$-PC tells us that $M_1$ is a $2$-disk. This proves for $M$ the irreducibility condition (b), and thereby completes the proof of
Proposition~\ref{equivcond}. \qed
\end{pfofequivcond}

\medskip
By the above Proposition~\ref{equivcond}, the following assertion is equivalent to $2$-GRT.

\begin{assertion}\label{2ga}
Every noncompact contractible $2$-manifold $M^2$ is isomorphic to a linear gasket in $\mathbb{H}^2$, or to $\mathbb{H}^2$ itself.
\end{assertion}

To conclude we present two quite different proofs of this assertion.

\begin{sctp}
We can assume \textsc{cat} = \textsc{top}.
The case of the assertion where $M$ has $3$ boundary 
components readily implies it for $2$, $1$, or $0$ boundary 
components, so we assume $M$ has $\geq 3$ boundary components.\ss

We shall use a pleasant \textsc{top} classification of such $M^2$
stated below. It is an easy consequence of three difficult
classical theorems applied to the double $DM$ of $M$ formed from
two copies of $M$ with  their boundaries identified. 
More details and references are given in~\cite{siebenmann08}.\ss

The first theorem was discovered by A.~Schoenflies~\cite{schoenflies02}
and states that a compact connected subset $J$
of the plane is a circle if and only if its complement has
two components and each point of $J$ is accessible as the
unique limit of a path in each. The second is the 
Osgood-Schoenflies theorem (proved circa 1912, see~\cite{siebenmann05})
stating that every circle $J$ topologically
embedded in the plane bounds a topological $2$-disk. The
third is due to B.~K\'er\'ekjart\'o~\cite{kerekjarto} and
classifies \emph{all} surfaces \emph{without} boundary, in particular $DM$, in terms
of what is now known as the (K\'er\'ekjart\'o-Freudenthal) \emph{end
compactification}.\ss

\begin{classification}
The end compactification of a noncompact contractible
surface $M$, written 
$E(M) = M \cup e(M)$, is  always
a $2$-disk, whose interior is $\Int M$, and  whose
boundary circle $\partial{E(M)}$ is the disjoint union 
\hbox{$\partial{M} \cup e(M)$} where  $e(M)$ is the compact and
totally disconnected \emph{end space} of $M$. Thus $M$
is homeomorphic to a $2$-disk $E(M)$ \emph{minus} a 
compact part $e(M)$ of its boundary.
\end{classification}

\begin{poc}
By~\cite{kerekjarto}, the end compactification $E(DM)$ is $S^2$. Then~\cite{schoenflies02}
shows that the obvious involution $\tau$ on $E(DM)$ has fixed point set a Jordan curve, and finally the Osgood-Schoenflies Theorem shows that $S^2/\tau = E(M)$ is a $2$-disk as required. \qed
\end{poc}

The remainder of the proof of Assertion~\ref{2ga} is elementary.
Identify $E(M)$ to the round euclidean disk  
$\mathbb{B}^2 \subset \R^2$ and consider the convex hull 
$\Hull(e(M))$ in $\R^2$. Since $M$ has $\geq 3$ ends, the convex hull
$\Hull(e(M))$ is topologically a $2$-disk in $\R^2$, and all
its extremal points (as a convex subset of $\mathbb{R}^2$) constitute $e(M) \subset \partial{\mathbb{B}^2}$.
Hence there is a standard homeomorphism 
$\Hull(e(M)) \to \mathbb{B}^2$, respecting every ray emanating
from the barycenter of the hull, and fixing $e(M)$. Thus
$M$ itself is \textsc{top} isomorphic to the linear gasket
\[  
   \Hull(e(M)) \cap \Int \mathbb{B}^2 = \Hull(e(M)) \cap \mathbb{H}^2.
   \eqno \qed
\]
\end{sctp}

\medskip

\begin{sgp}
There is a famous procedure that tiles any closed $2$-manifold $M_g$ of genus $g \geq 2$ by compact hexagonal $2$-cells (= tiles), and then constructs a hyperbolic structure for $M_g$ in which each $2$-cell has geodesic edges and all vertex angles $\pi/2$. In reply to our enquiry about known geometric proofs, J.-P.~Otal promptly suggested that a similar approach would prove the assertion.\ss

The case of the assertion for $\geq 3$ boundary components implies the general 
case, so \emph{we restrict to this case in what follows}.\ss

We work in the \textsc{diff} category.\ss

Given an arbitrary enumeration of the components of
$\partial{M}$ (called \emph{sides} below), there is a construction procedure of `cut and paste' topology to
construct on $M$ a \textsc{diff} tiling in which each $2$-dimensional
tile is closed and is either a compact hexagonal tile or a noncompact
cusp tile (= a triangle with one ideal vertex at
Alexandroff's infinity).\ss

These tiles will fit together as follows. Each finite vertex lies in $\partial{M}$. Each hexagonal tile $H$
has $3$ of its $6$ edges alternatively in three distinct
sides of $\partial{M}$ and the remainder of $\partial{H}$ lies in
$\Int M$. The intersection of any hexagonal tile with any
distinct tile is either empty or a common edge joining
distinct components of $\partial{M}$. Every cusp tile meets $\partial{M}$
in its two infinite sides while its compact side is
shared with one hexagonal tile. The nerve of the tiling
of $M$ is thus a tree $T$ with one trivalent vertex for each
hexagonal tile and one univalent vertex for each cusp
tile.\ss

The procedure is initialized by construction of a hexagonal tile that
meets the first three sides in the given
enumeration of sides. After the first three sides, for each successive new side, one more hexagonal tile $H$ is inductively constructed; $H$ meets the
new side and those two of the earlier sides that
are in a topological sense adjacent. This induction completes the construction of all the hexagonal tiles. To terminate the tiling procedure, the cusp
tiles are then defined to be the closures of the
components of the complement of the union of all the hexagonal
tiles. The cusp tiles correspond bijectively to the isolated ends of $M$.\ss

This \textsc{diff} tiling is well-defined by the given enumeration of the sides
of $M$, up to a \textsc{diff} isomorphism of tilings
that is piecewise \textsc{diff} isotopic to the identity of $M$.\ss

Each tile has a hyperbolic structure with the
length of each compact edge equal to one, and a
right angle at each vertex (infinity excepted).
After an isotopy of such structures,
they fit together to form a complete
hyperbolic structure $\sigma$ on $M$ making $\partial{M}$ geodesic.\ss

This hyperbolic structure $\sigma$ on $M$ is well-defined by the tiling, up to
isometry ambient isotopic to the identity.\ss

To conclude, one develops $M_\sigma$ isometricly into $\mathbb{H}^2$, proceeding inductively tile
by tile, climbing up the above tree $T$, to realize
$M$ as a linear gasket in $\mathbb{H}^2$. \qed
\end{sgp}

\begin{remark}
The hyperbolic structure $\sigma$ on $M$ obtained by the above tiling procedure is often distinct from any structure obtained by the classical proof; indeed for every isolated end of $M$ the limit points of its cusp tile neighborhood in the ideal circle at infinity $\partial{\mathbb{B}^2}$ of $\mathbb{H}^2$ constitute a whole compact interval rather than a point.
However, this clear geometric distinction can be suppressed as follows: the cut-locus in $M_\sigma$ of $\partial{M}$ is a properly embedded piecewise geodesic graph $\Gamma \subset \Int\, M$, which meets each tile in a standard way. The convex hull of the closure of $\Gamma$ in $\mathbb{B}^2$, intersected with $\mathbb{H}^2$, is a smaller but visibly diffeomorphic copy $M'$ of $M$ whose hyperbolic structure is of the sort obtained in the classical proof.
\end{remark}

\section{acknowledgments}
The authors thank R.D.~Edwards, J.-P.~Otal, and an anonymous referee for helpful comments.

\end{document}